\documentclass[11pt,a4paper]{amsart}

%%v2 : Avec une nouvelle partie consacrée à un critère d'hypercyclicité en dimension 2
%%v3 : Améliorations un peu partout....
%%v4 : CNS d'hypercyclicité pour les shifts, Nouvelle suite, nouveau covering lemma...
%%v5 : Nouvelle intro, nouveaux résultats, dimension de boite homogène
%%v6 : Relecture Fernando
%%v7 : Relecture Fred

\usepackage{amssymb,xspace}
\usepackage{amstext}
\theoremstyle{plain}
\usepackage{amsbsy,amssymb,amsfonts,latexsym}
\usepackage[utf8]{inputenc}
\usepackage{xcolor}

\usepackage{pgf,tikz}%,pgfplots}
%\pgfplotsset{compat=1.15}
\usepackage{mathrsfs}
\usetikzlibrary{arrows}

\marginparwidth=10 true mm
\oddsidemargin=0 true mm
\evensidemargin=0 true mm
\marginparsep=5 true mm
\topmargin=0 true mm
\headheight=8 true mm
\headsep=4 true mm
\topskip=0 true mm
\footskip=15 true mm

\setlength{\textwidth}{150 true mm}
\setlength{\textheight}{220 true mm}
\setlength{\hoffset}{8 true mm}
\setlength{\voffset}{2 true mm}

\parindent=0 true mm

\usepackage{enumerate}
\usepackage{graphics}
\usepackage{subcaption}

\date{\today}
\title{Common hypercyclic vectors and dimension of the parameter set}
\author{Fr\'ed\'eric Bayart}
\author{Fernando Costa Jr.}
\address[F. Bayart, F. Costa]{Laboratoire de Mathématiques Blaise Pascal UMR 6620 CNRS, Université Clermont Auvergne, Campus universitaire des Cézeaux, 3 place Vasarely, 63178 Aubière Cedex, France.} 
\email{Frederic.Bayart@uca.fr, Fernando.Vieira\underline{ }Costa\underline{ }Junior@uca.fr}
%\thanks{The author was partially supported by the grant ANR-17-CE40-0021 of the French National Research Agency ANR (project Front)}
%
\author[Q. Menet]{Quentin Menet}
\address[Q. Menet]{Service de Probabilit\'e et Statistique, D\'epartement de Math\'ematique\\ Universit\'{e} de Mons\\ Place du Parc 20\\ 7000 Mons (Belgium)}
\email{quentin.menet@umons.ac.be}
 \thanks{This work was supported in part by
the project FRONT of the French
National Research Agency (grant ANR-17-CE40-0021). The third author is a Research Associate of the Fonds de la Recherche Scientifique - FNRS}
\keywords{Common hypercyclicity, Hausdorff dimension}
\subjclass{47A16}

\newcommand{\veps}{\varepsilon}

\def\RR{\mathbb R}

\def\NN{\mathbb N}

\def\CC{\mathbb C}

\def\card{\textrm{card}}
\def\dboxsup{\overline{\dim_{\rm{B}}}}
\def\dimh{\dim_{\mathcal H}}

\def\bj{\mathbf j}

\DeclareMathOperator{\diam}{diam}

\def\bk{\mathbf k}
\def\bj{\mathbf j}
\def\Imr{I_r^m}
\def\dimh{\dim_{\mathcal H}}
\def\dboxsup{\overline{\dim}_{\rm B}}
\def\dboxhom{\dim_{\rm HB}}

\newtheorem{theorem}{Theorem}[section]

\newtheorem{lemma}[theorem]{Lemma}

\newtheorem{proposition}[theorem]{Proposition}

\newtheorem{corollary}[theorem]{Corollary}

{\theoremstyle{definition}}
{\theoremstyle{definition}}

{\theoremstyle{definition}\newtheorem{example}[theorem]{Example}}

{\theoremstyle{definition}\newtheorem{definition}[theorem]{Definition}}

{\theoremstyle{definition}}

{\theoremstyle{definition}\newtheorem{remark}[theorem]{Remark}}

\newtheorem{question}[theorem]{Question}

\newtheorem*{CSTHEOREM}{Costakis-Sambarino Theorem}

%Environnement d\'emonstration

%Environnement remarque

%Environnement Exemple

%\newcommand{\chgA}[1]{\textcolor{red}{#1}}
%\newcommand{\chgB}[2]{\textcolor{red}{#1}}

\begin{document}

\definecolor{zzttqq}{rgb}{0.6,0.2,0}
\definecolor{zzttqqa}{rgb}{0.2,0.6,0}
\definecolor{cqcqcq}{rgb}{0.7529411764705882,0.7529411764705882,0.7529411764705882}

\begin{abstract}
We investigate the existence of a common hypercyclic vector for a family $(T_\lambda)_{\lambda\in \Lambda}$ of hypercyclic
operators acting on the same Banach space $X$. We give positive and negative results involving the dimension of $\Lambda$ and the regularity of each map $\lambda\in \Lambda\mapsto T_\lambda^n x$, $x\in X$, $n\in\mathbb N$.
\end{abstract}

\maketitle

%%%%%%%%%%%%%%%%%%%%%%%%%%%%%%%%%
%%%%%%%%%%% INTRODUCTION %%%%%%%%%%%%%%
%%%%%%%%%%%%%%%%%%%%%%%%%%%%%%%%%

\section{Introduction}

Among the many problems arising in linear dynamics, that of finding a common hypercyclic vector for an uncountable
family of hypercyclic operators is one of the most prominent. Let us introduce the relevant definitions. Let $X$ be an infinite-dimensional and separable $F$-space
and let $T\in\mathcal L(X)$. A vector $x\in X$ is said to be {\it hypercyclic} for $T$ if its orbit under $T$, $\{T^n x:\ n\geq 0\}$ is
dense in $X$. The set of hypercyclic vectors for $T$ will be denoted by $HC(T)$. We refer to the two books \cite{BM09} and \cite{GePeBook}
for the standard theory of hypercyclic operators. 

Given $(T_\lambda)_{\lambda\in\Lambda}$ a family of hypercyclic operators acting on the same $F$-space, it is natural to ask
whether $\bigcap_{\lambda\in\Lambda}HC(T_\lambda)$ is nonempty. The first result in that direction is due to Abakumov
and Gordon who showed in \cite{AG} that $\bigcap_{a>0}HC(e^aB)$ is nonempty, where $B$ is the unweighted backward shift
acting on $\ell_p$, $p\in[1,+\infty)$ or on $c_0$. Soon after, Costakis and Sambarino in \cite{CoSa04a} came with a criterion for proving 
the common hypercyclicity of some families, which allow them to extend the results of \cite{AG} to other families of shifts
or to translation operators. 

The paper \cite{AG} also contains an important negative result, granted to Borichev: the two-dimensional family $(e^a B\times e^b B)_{(a,b)\in(0,+\infty)^2}$ acting on $\ell_2\times\ell_2$, does not admit a common hypercyclic vector. 
It turns out that most of the examples of families admitting a common hypercyclic vector are one-dimensional families,
with two notable exceptions: the Leon-M\"uller theorem \cite{LeMu04} which allows to introduce an extra parameter of rotations, and translation operators which have some redundant properties (see for instance \cite{Ba15}). Even for one-dimensional families, several
intriguing problems remain. For instance, if $\Lambda\subset(0,+\infty)^2$ is a monotonic Lipschitz curve, then $(e^{a}B\times e^b B)_{(a,b)\in\Lambda}$ possesses a common hypercyclic vector, whereas this is unknown for $(e^{t}B\times e^{3-t}B)_{t\in[1,2]}$ (see \cite{BMIND}).

Our ambition, in this paper, is to revisit this problem and to shed new light on common hypercyclic vectors. We begin with a review of the results we intend to prove. 
In what follows, the parameter set $\Lambda$ will always be a subset of $\RR^d$ for some $d\geq 1$ and $\RR^d$ will be endowed with the sup-norm.

%%%%% PRODUCT OF MULTIPLES OF BACKWARD SHIFT %%%%%%

\subsection{Products of multiples of the backward shift}

Our first result is an answer to the problem of \cite{BMIND} we just recalled. More precisely, we will prove the following.

\begin{theorem}\label{thm:lipschitz}
Let $X=\ell_p(\mathbb N)$, $p\in[1,+\infty)$, or $X=c_0(\mathbb N)$, and let $\Lambda\subset (0,+\infty)^d$ be a Lipschitz curve. Then $(e^{\lambda(1)}B\times\cdots\times e^{\lambda(d)} B)_{\lambda\in\Lambda}$ possesses a dense $G_\delta$ set of common hypercyclic vectors.
\end{theorem}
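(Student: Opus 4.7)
The plan is to reduce the Lipschitz-curve situation to a genuinely one-parameter Costakis--Sambarino style construction. Write $\Lambda=\gamma([0,1])$ with $\gamma=(\gamma_1,\dots,\gamma_d)\colon[0,1]\to(0,+\infty)^d$ being $L$-Lipschitz, and, using compactness of $\Lambda$, fix $0<\alpha\leq M$ such that $\alpha\leq\gamma_i(t)\leq M$ uniformly in $i,t$. For $t\in[0,1]$ set $T_t=e^{\gamma_1(t)}B\times\cdots\times e^{\gamma_d(t)}B$. A standard Baire category argument (with the usual compactness trick to pass from uncountable $\Lambda$ to a countable intersection of open sets) reduces the problem to the following: for every pair of finitely supported $x_0,y\in X^d$ and every $\varepsilon>0$, there exists $x\in X^d$ with $\|x-x_0\|<\varepsilon$ such that for each $t\in[0,1]$ some integer $n=n(t)$ satisfies $\|T_t^nx-y\|<\varepsilon$.

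To build such an $x$, choose a partition $0=t_0<t_1<\cdots<t_N=1$ and integers $n_1<n_2<\cdots<n_N$, and set $x=x_0+\sum_{j=1}^N\tilde{x}_j$ where the $i$-th coordinate of $\tilde{x}_j$ is
\[
(\tilde{x}_j^{(i)})_k=e^{-n_j\gamma_i(t_j)}\,y^{(i)}_{k-n_j}\qquad(k\geq n_j).
\]
This makes $T_{t_j}^{n_j}\tilde{x}_j=y$ exactly, and for $t\in[t_{j-1},t_j]$ we have $(T_t^{n_j}\tilde{x}_j)^{(i)}=e^{n_j(\gamma_i(t)-\gamma_i(t_j))}\,y^{(i)}$. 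The crucial point is that the $L$-Lipschitz condition on the \emph{single} curve $\gamma$ yields the uniform bound $|\gamma_i(t)-\gamma_i(t_j)|\leq L(t_j-t_{j-1})$ in all $d$ coordinates at once, so taking the spacing $t_j-t_{j-1}\leq c/(Ln_j)$ with $c$ small enough forces $\|T_t^{n_j}\tilde{x}_j-y\|<\varepsilon/2$ uniformly on the $j$-th subinterval.

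The non-interference estimate splits into two parts. Chunks with $k<j$ are annihilated outright by $B^{n_j}$ as soon as $A$ exceeds $\diam(\mathrm{supp}(y^{(i)}))$ for every $i$, since then $\mathrm{supp}(\tilde{x}_k^{(i)})\subset[n_k,n_k+K]\subset[0,n_j)$. For chunks with $k>j$ one estimates
\[
\|T_t^{n_j}\tilde{x}_k\|\leq\|y\|\,\max_i\exp\bigl(n_j\gamma_i(t)-n_k\gamma_i(t_k)\bigr),
\]
and by combining the Lipschitz bound with the lower bound $\gamma_i\geq\alpha$ and the telescoping $|t-t_k|\leq(c/(AL))\log(k/j)$, arithmetic growth $n_j=Aj$ yields $n_j\gamma_i(t)-n_k\gamma_i(t_k)\leq -(k-j)(A\alpha-c)$, so the sum over $k>j$ becomes a small geometric series whenever $A\alpha>c$. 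The total perturbation norm $\|x-x_0\|\leq\|y\|\sum_j e^{-Aj\alpha}$ is controlled by $A$ as well.

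The main technical tension is the simultaneous balance of scales: the Lipschitz constraint demands small spacings $t_j-t_{j-1}\leq c/(Ln_j)$ and hence forces $N$, $n_N$ to be large, while non-interference needs $A\alpha$ larger than a constant depending on $\varepsilon$ and $L$. Linear growth $n_j=Aj$ is exactly the right Ansatz because it makes $\sum_{j=1}^N c/(ALj)\sim (c/AL)\log N$ exceed $1$ with finite $N$, whereas faster (e.g.\ geometric) growth would give a convergent series of spacings and fail to cover $[0,1]$. These constraints are simultaneously satisfiable precisely because one single Lipschitz constant $L$ controls all $d$ components of $\gamma$ uniformly — the one-dimensional structural feature of a Lipschitz curve that is destroyed in Borichev's counterexample from \cite{AG} for genuinely $d$-dimensional parameter sets.
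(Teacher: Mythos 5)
Your construction is correct and is essentially the paper's argument: the same arithmetic sequence $n_j=Aj$, the same harmonic spacings $t_j-t_{j-1}\sim c/(ALj)$ (whose divergent sum covers $[0,1]$ with finitely many intervals), and the same telescoping estimate $n_k\gamma_i(t_k)-n_j\gamma_i(t_j)\geq (k-j)(A\alpha-c)$ exploiting that one Lipschitz constant controls all $d$ coordinates. The only difference is packaging — the paper feeds this covering into its general characterization (Theorem \ref{thm:caracstandard}), whereas you verify the Basic Criterion directly — but the estimates are the same ones appearing in the proofs of Theorems \ref{thm:caracstandard} and \ref{thm:lipschitz}.
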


The way to delete the assumption "$\Lambda$ is monotonic" in Theorem \ref{thm:lipschitz} will be to obtain a characterization for the common hypercyclicity of a family of products of weighted shifts acting on a Fr\'echet sequence space, when these
shifts satisfy some natural conditions. This condition, which is rather technical, takes a much more pleasant form when we apply it to multiples of the backward shifts. We will apply it in order to get Theorem \ref{thm:lipschitz}.

%%%%%%%%% BORICHEV REVISITED %%%%%%%%%%%%%%%

\subsection{Borichev result revisited}

Borichev's result can be rephrased in the following more precise way:
\begin{quote}
Let $\Lambda\subset(0,+\infty)^2$ be such that $\bigcap_{(a,b)\in\Lambda}HC(e^a B\times e^b B)$ is not empty. Then $\Lambda$ has measure zero.
\end{quote}
 If we analyze
the proof of this result, it turns out that a key point is given by the fact that if $(e^a B)^nu$ and $(e^{a'}B)^nu$ are both close to the same nonzero vector (e.g. $e_0$), then $|a-a'|$ has to be small, precisely $|a-a'|\leq C/n$ for some constant $C$.

We will show that this can be put in a more general framework, replacing sets of zero Lebesgue measure by sets of small Hausdorff dimension.

\begin{theorem}\label{thm:hausdorffintro}
Let $\Lambda\subset\RR^d$, let $(T_\lambda)_{\lambda\in\Lambda}$ be a family of operators acting on the Banach space $X$. Assume that there exist $\alpha>0$, $v\in X$, $\delta>0$ and $C>0$ such that,
for all $\lambda,\mu\in\Lambda$, for all $n\in\NN$ and all $u\in X$ satisfying
$$\|T^n_\lambda u-v\|<\delta\textrm{ and }\|T^n_\mu u-v\|<\delta,$$
then 
$$\|T_\lambda^n u -T_\mu^n u\|\geq Cn^{\alpha} \|\lambda-\mu\|.$$
If $\bigcap_{\lambda\in\Lambda}HC(T_\lambda)\neq\varnothing$, then $\dimh(\Lambda)\leq \frac 1\alpha$.
\end{theorem}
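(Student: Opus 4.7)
The plan is to combine the pointwise separation hypothesis with the density of a common hypercyclic orbit in order to cover $\Lambda$ by sets whose diameters decay fast enough in $n$ to force $\dimh(\Lambda) \leq 1/\alpha$. Fix a vector $u \in \bigcap_{\lambda \in \Lambda} HC(T_\lambda)$ and, for each $n \in \NN$, introduce the level set
$$E_n = \{\lambda \in \Lambda : \|T_\lambda^n u - v\| < \delta\}.$$
Two estimates then drive the argument: a diameter bound on each $E_n$, and a covering statement $\Lambda = \bigcup_{n \geq N} E_n$ valid for every $N$.

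For the diameter bound, take $\lambda, \mu \in E_n$. The triangle inequality gives $\|T_\lambda^n u - T_\mu^n u\| < 2\delta$, while the separation hypothesis yields $\|T_\lambda^n u - T_\mu^n u\| \geq C n^\alpha \|\lambda - \mu\|$, so
$$\diam(E_n) \leq \frac{2\delta}{Cn^\alpha}.$$
For the covering, since $u$ is hypercyclic for each $T_\lambda$, its orbit is dense in $X$, and deleting a finite initial segment from a dense set leaves a dense set. Thus for every $N \in \NN$ and every $\lambda \in \Lambda$ there exists $n \geq N$ with $\|T_\lambda^n u - v\| < \delta$, which means exactly $\Lambda = \bigcup_{n \geq N} E_n$.

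To conclude, fix $s > 1/\alpha$, so that $\alpha s > 1$ and $\sum_n n^{-\alpha s} < \infty$. Given $\eta > 0$, choose $N$ large enough that $2\delta/(CN^\alpha) \leq \eta$; then $\{E_n\}_{n \geq N}$ is an admissible $\eta$-cover of $\Lambda$, and
$$\mathcal{H}^s_\eta(\Lambda) \leq \sum_{n \geq N} \diam(E_n)^s \leq \left(\frac{2\delta}{C}\right)^s \sum_{n \geq N} \frac{1}{n^{\alpha s}}.$$
As $\eta \to 0$ we have $N \to \infty$ and the right-hand side vanishes, giving $\mathcal{H}^s(\Lambda) = 0$ for every $s > 1/\alpha$, and therefore $\dimh(\Lambda) \leq 1/\alpha$. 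I do not expect a substantive obstacle: the hypothesis is calibrated so that the $n^\alpha$ separation rate matches the convergence threshold of $\sum n^{-\alpha s}$, and the essential use of hypercyclicity (beyond mere density of a single orbit) is precisely to allow covering $\Lambda$ by the tail indices $n \geq N$, which is what upgrades the conclusion from a zero-Lebesgue-measure statement to a Hausdorff-dimension bound.
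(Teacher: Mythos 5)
Your proof is correct and follows essentially the same route as the paper: the paper proves a slightly more general gauge-function version (Theorem 3.1) by introducing exactly the sets $\Lambda_n=\{\lambda:\|T_\lambda^n u-v\|<\delta\}$, bounding $\diam(\Lambda_n)\leq 2\delta/(Cn^\alpha)$ via the triangle inequality and the separation hypothesis, and covering $\Lambda$ by $\bigcup_{n\geq N}\Lambda_n$ using density of orbit tails. Your specialization to $\phi(x)=x^s$ with $s>1/\alpha$ is precisely how the paper deduces the dimension bound.
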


In particular, this can be applied to the family $(B_{w(\lambda(1))}\times\cdots \times B_{w(\lambda(d))})_{\lambda\in (0,+\infty)^d}$ where $(w(a))_{a>0}$ is defined by $w_1(a)\cdots w_n(a)=\exp(an^\alpha)$ for some $\alpha>0$ and for all $n\in\NN$.
\begin{corollary}\label{cor:sizenalpha}
Let $X=\ell_p(\mathbb N)$, $p\in[1,+\infty)$ or $X=c_0(\mathbb N)$. Let $\alpha\in(0,1]$, let $(w(a))_{a>0}$ be defined by $w_1(a)\cdots w_n(a)=\exp(an^\alpha)$ for all $n\in\NN$
and let $\Lambda\subset (0,+\infty)^d$. If $(B_{w(\lambda(1))}\times\cdots\times B_{w(\lambda(d))})_{\lambda\in\Lambda}$ admits a common hypercyclic vector, then $\dimh(\Lambda)\leq 1/\alpha$.
\end{corollary}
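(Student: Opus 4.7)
The corollary is a direct application of Theorem \ref{thm:hausdorffintro}, so the whole task is to check its hypothesis for the product family $T_\lambda = B_{w(\lambda(1))}\times\cdots\times B_{w(\lambda(d))}$ acting on $X^d$ (endowed with its natural product norm, for which $\|(x^{(1)},\dots,x^{(d)})\|\geq\max_i\|x^{(i)}\|_X$). I would take $v=(e_0,e_0,\dots,e_0)\in X^d$, fix some $\delta\in(0,1)$ (say $\delta=1/2$), and aim to establish the separation inequality with constant $C=1-\delta$.

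The key observation is that the zeroth coordinate of $B_{w(a)}^n u^{(i)}$ equals $w_1(a)\cdots w_n(a)\,u^{(i)}_n=\exp(an^{\alpha})\,u^{(i)}_n$. So the assumption $\|B_{w(\lambda(i))}^n u^{(i)}-e_0\|<\delta$ immediately forces
$$|u^{(i)}_n|>(1-\delta)\exp(-\lambda(i)n^{\alpha}),$$
and similarly with $\mu(i)$ in place of $\lambda(i)$. These are the only consequences of the proximity hypotheses I would use.

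Given $\lambda,\mu\in\Lambda$ and $u\in X^d$ with $\|T_\lambda^n u-v\|<\delta$ and $\|T_\mu^n u-v\|<\delta$, I would pick an index $i^\ast\in\{1,\dots,d\}$ with $|\lambda(i^\ast)-\mu(i^\ast)|=\|\lambda-\mu\|$ (recall $\RR^d$ carries the sup-norm), and by symmetry of the target inequality assume $\lambda(i^\ast)\leq\mu(i^\ast)$. Using the product norm to drop to the $i^\ast$-th coordinate and then comparing only the zeroth entries of the resulting sequences,
\begin{align*}
\|T_\lambda^n u-T_\mu^n u\|
&\geq |u^{(i^\ast)}_n|\cdot\bigl(e^{\mu(i^\ast)n^{\alpha}}-e^{\lambda(i^\ast)n^{\alpha}}\bigr) \\
&\geq (1-\delta)e^{-\lambda(i^\ast)n^{\alpha}}\cdot e^{\lambda(i^\ast)n^{\alpha}}\bigl(e^{(\mu(i^\ast)-\lambda(i^\ast))n^{\alpha}}-1\bigr) \\
&\geq (1-\delta)\bigl(\mu(i^\ast)-\lambda(i^\ast)\bigr)n^{\alpha} \\
&= (1-\delta)\,n^{\alpha}\,\|\lambda-\mu\|,
\end{align*}
where the third line uses $e^x-1\geq x$ for $x\geq 0$. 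This is precisely the hypothesis of Theorem \ref{thm:hausdorffintro}, which then yields $\dimh(\Lambda)\leq 1/\alpha$.

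I do not expect any serious obstacle: the corollary is really the statement that, among the operators considered, the crude ``multiplicative'' character of the shifts ($w_1(a)\cdots w_n(a)=e^{an^{\alpha}}$) makes the separation rate between parameters of order exactly $n^{\alpha}\|\lambda-\mu\|$, which matches the rate the theorem is designed to exploit. The only minor points to be careful about are the symmetry argument justifying the WLOG $\lambda(i^\ast)\leq\mu(i^\ast)$ and the compatibility of the product norm on $X^d$ with the sup-norm on $\RR^d$, both of which are immediate.
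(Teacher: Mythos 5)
Your proof is correct and follows essentially the same route as the paper: the paper derives the corollary from Theorem \ref{thm:hausdorffintro} via an intermediate statement (Corollary \ref{cor:dimension}) for general weights satisfying $|\sum_{j\le n}\log w_j(a)-\sum_{j\le n}\log w_j(b)|\geq Cn^{\alpha}|a-b|$, but the computation there is exactly yours — take $v=(e_0,\dots,e_0)$, read off the zeroth coordinate to get $w_1\cdots w_n(\lambda(k))|u_n(k)|\geq 1-\delta$, and use $e^x-1\geq x$ to obtain the separation $\psi(n)=Cn^{\alpha}$. No gaps.
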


In view of the previous corollary, one may ask if the converse holds true, namely if the condition $\dimh(\Lambda)\leq 1/\alpha$, or $\dimh(\Lambda)<1/\alpha$, implies that 
$\bigcap_{\lambda\in\Lambda}HC(B_{w(\lambda(1))}\times\cdots\times B_{w(\lambda(d))})\neq\varnothing$. 
More specifically, we may ask if $\alpha\leq 1/d$ implies $\bigcap_{\lambda\in(0,+\infty)^d}HC(B_{w(\lambda(1))}\times\cdots\times B_{w(\lambda(d))})\neq\varnothing$.
The study of these questions motivates the remaining part of the paper.

%%%%% GENERAL CRITERION %%%%%%%%%%%%%%%%%%

\subsection{A common hypercyclicity criterion in dimension greater than $1$}

In the remaining of this introduction, we will always assume that $X$ is a separable Banach space.
We discuss now common hypercyclicity criteria for a family $(T_\lambda)_{\lambda\in\Lambda}$ of operators acting on the same Banach space $X$. 
 We will always assume that the following assumptions are true:
\begin{itemize}
\item the map $(\lambda,u)\mapsto T_\lambda u$ is continuous from $\Lambda\times X$ into $X$;
\item there exists a dense set $\mathcal D\subset X$ such that each operator $T_\lambda$ has a partial right-inverse $S_\lambda:\mathcal D\to\mathcal D$, that is $T_\lambda S_\lambda(u)=u$ for all $u\in\mathcal D$ and all $\lambda\in\Lambda$.
\end{itemize}

These assumptions are for instance satisfied if $T_\lambda$ is defined as the product of weighted shifts $B_{w^{(1)}(\lambda(1))}\times\cdots\times B_{w^{(d)}(\lambda(d))}$, $X=c_0^d(\NN)$ or $X=\ell_p^d(\NN)$, $p\in[1,+\infty)$, and, for each $i=1,\dots,d$ and each $n\in\NN$, $a\mapsto w_n^{(i)}(a)$ is continuous (we will call this a \emph{continuous family of weights}).% In this paper, we will only consider continuous families of positive weights and will be interested by the impact of the growth of these sequences on the existence of common hypercylic vectors. 

When $\Lambda$ is an interval of the real line, one of the most useful result to get common hypercyclic vectors is the Costakis-Sambarino criterion:

\begin{CSTHEOREM}
Let $\Lambda$ be an interval of the real line. Assume that for every compact interval $K\subset \Lambda$, every $u\in \mathcal{D}$,
\begin{enumerate}
\item[(CS1)] there exist $m\geq 1$ and $(c_k)_{k\ge m}$ a summable sequence of positive real numbers such that
\begin{enumerate}
\item $\|T_{\lambda}^{n+k}S_{\mu}^n u||\le c_k$ for every $n\in\NN$, $k\ge m$, $\mu\le \lambda$, $\mu,\lambda\in K$
\item $\|T_{\lambda}^{n}S_{\mu}^{n+k} u||\le c_k$ for every $n\in\NN$, $k\ge m$, $\mu\ge \lambda$,  $\mu,\lambda\in K$;
\end{enumerate}
\item[(CS2)] for all $\veps>0$, there exists $\tau>0$ such that, for all $n\geq 1$, 
\[0\le \mu-\lambda\le \frac{\tau}{n} \Rightarrow \|T_{\lambda}^nS_{\mu}^n(u)-u\|<\veps.\]
\end{enumerate}
Then $\bigcap_{\lambda\in \Lambda} HC(T_{\lambda})$ is a dense $G_{\delta}$ subset of $X$.
\end{CSTHEOREM}

We look for a substitute for this theorem when $\Lambda$ is not an interval of the real line and in particular if the "dimension" of $\Lambda$ is greater than $1$. The continuity condition (CS2) is naturally implied by the following Lipschitz estimate: for all $u\in\mathcal D$, there exists $C>0$ such that, for all $n\geq 1$, for all $\lambda,\mu\in\Lambda$, 
$$\|T_\lambda^n S_\mu^n u-T_\mu^nS_\mu^n u\|\leq Cn\|\lambda-\mu\|.$$
Nevertheless, if we have the opposite inequality
$$\|T_\lambda^n S_\mu^n u-T_\mu^nS_\mu^n u\|\geq Cn\|\lambda-\mu\|,$$
then Theorem \ref{thm:hausdorffintro} essentially says that the Hausdorff dimension of the set of common hypercyclic vectors cannot exceed 1.
Hence, to get common hypercyclic vectors for subsets of $\RR^d$ of bigger dimension, we will need a stronger condition, at least something like
\[\| \lambda-\mu\| \le \frac{\tau}{n^\alpha} \Rightarrow \|T_{\lambda}^nS_{\mu}^n(u)-u\|<\veps\]
for some $\alpha\in(0,1)$. Under this last condition and an appropriate substitute for (CS1), we will be able to prove a common hypercyclic criterion when 
$\Lambda\subset\RR^d$ possesses some regularity and has "dimension" less than $1/\alpha$. The notion of dimension we need 
is a kind of homogeneous box dimension. For $r\geq 1$, we define $I_r=\{1,\dots,r\}$.

\begin{definition}\label{def:homogeneousdimension}
Let $\Lambda\subset\RR^d$ be compact. We say that $\Lambda$ has \emph{homogeneous box dimension at most $\gamma\in (0,d]$}
if there exist $r\geq 2$, $C(\Lambda)>0$ and, for all $m\geq 1$, a family $(\Lambda_\bk)_{\bk\in I_r^m}$ of compact subsets of $\Lambda$  such that for all $m\geq 1$,
\begin{itemize}
\item for all $\bk\in I_r^m$, $\diam(\Lambda_\bk)\leq C(\Lambda)\left(\frac1{r^{1/\gamma}}\right)^m$;
\item $\Lambda\subset \bigcup_{\bk\in I_r^m}\Lambda_\bk$;
\item for all $\bk\in I_r^m$, $\Lambda_{k_1,\dots,k_m}\subset \Lambda_{k_1,\dots,k_{m-1}}$.
\end{itemize}
The \emph{homogeneous box dimension} of $\Lambda$ is defined as the infimum of the $\gamma\in(0,d]$ such that $\Lambda$ has homogeneous box dimension at most $\gamma$ and will be denoted $\dboxhom(\Lambda)$.
\end{definition} 

We will discuss later the link between this notion of dimension and more classical ones; we just observe for the moment
that any compact subset of $\RR^d$ has homogeneous box dimension at most $d$. 
Having this notion of dimension at hand, we can prove the following result. 

\begin{theorem}\label{thm:multiprecised}
Let $\gamma>0$  and let $\Lambda$ be a compact subset of $\RR^d$ having homogeneous box dimension at most $\gamma$.
Assume moreover that there exist $\alpha\in(0,1/\gamma)$, $\beta>\alpha\gamma$ and $D>0$ such that, for all $u\in\mathcal D$, 
\begin{enumerate}[(a)]
\item there exist $C>0,N>0$ such that, for all $\lambda,\mu\in \Lambda$, for all $n\geq 0$ and $k\geq N$ such that $\|\lambda-\mu\|\leq D\frac{k^\alpha}{(n+k)^\alpha}$, then 
\[ \left\| T_\lambda^{n+k}S_\mu^n u\right\|\leq \frac C {k^\beta}, \]
\[ \left\| T_\lambda^{n}S_\mu^{n+k} u\right\|\leq \frac C {k^\beta}. \]
\item for all $\veps>0$, there exists $\tau>0$ such that, for all $n\geq 1$, for all $\lambda,\mu\in\Lambda$,
\[ \|\lambda-\mu\|\leq \frac{\tau}{n^\alpha}\implies \left\|T_\lambda^n S_\mu^n u-u\right\|<\veps. \]
\end{enumerate}
Then $\bigcap_{\lambda\in\Lambda} HC(T_\lambda)$ is a dense $G_\delta$ subset of $X$.
\end{theorem}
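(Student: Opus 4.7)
I would follow a Baire-category scheme tailored to the multiscale tree structure of Definition~\ref{def:homogeneousdimension}. Compactness of $\Lambda$ together with the joint continuity of $(\lambda,x)\mapsto T_\lambda^n x$ implies, via a finite-subcover argument, that $\bigcap_{\lambda\in\Lambda}\bigcup_{n\ge 0}(T_\lambda^n)^{-1}(V_j)$ is actually open in $X$ for every basic open $V_j$, so $\bigcap_\lambda HC(T_\lambda)$ is a $G_\delta$ \emph{a priori}. Density reduces to the following task: given two nonempty open sets $U,V$, produce $x\in U$ and a map $\lambda\mapsto n_\lambda\in\NN$ such that $T_\lambda^{n_\lambda}x\in V$ for every $\lambda\in\Lambda$. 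Fix $u\in U\cap\mathcal D$, $v\in V\cap\mathcal D$, with closed balls $\overline{B(u,\eta)}\subset U$, $\overline{B(v,\eta)}\subset V$, and let $C,N$ be constants valid for both $u,v$ in (a)--(b). As a harmless preliminary reduction I cover $\Lambda$ by the finitely many intersections $\Lambda^{(i)}=\Lambda\cap\Lambda_{\bk_0^{(i)}}$ at level $m_0$ chosen large enough so that $\diam(\Lambda^{(i)})\le D/(2r)^\alpha$; each $\Lambda^{(i)}$ still has homogeneous box dimension at most $\gamma$, and a finite intersection of dense $G_\delta$'s closes the argument.

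The heart of the proof is the multiscale candidate vector, built at a large level $M$ from representatives $\mu_\bk\in\Lambda_\bk$:
\[
x \;:=\; u + \sum_{\bk\in I_r^M} S_{\mu_\bk}^{n_\bk}v,\qquad n_\bk \;:=\; N_0\sum_{i=1}^{M}k_i\,q^{M-i},
\]
where $N_0\in\NN$ is a large scale and the integer base $q\in\NN$ is chosen so that simultaneously $q\ge 2r$ and $q^\alpha\le r^{1/\gamma}$. Such a $q$ exists after, if necessary, coarsening the tree $(\Lambda_\bk)_{\bk\in I_r^m}$ to $(\Lambda_\bk)_{\bk\in I_{r^s}^m}$ for $s$ large, which is legitimate because $\alpha\gamma<1$. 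The $q$-adic digit structure produces a clean separation: if $\bk,\bj$ first disagree at position $\ell+1$ then $|n_\bk-n_\bj|\ge(N_0/2)q^{M-\ell-1}$, while $n_\bk\le N_0 r q^M/(q-1)$. Whenever $\lambda\in\Lambda_\bk$ I put $n_\lambda:=n_\bk$.

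Three estimates then complete the argument. \emph{Diagonal:} $\|\lambda-\mu_\bk\|\le C(\Lambda^{(i)})r^{-M/\gamma}$ combined with $n_\bk\lesssim r^{M/(\alpha\gamma)}$ triggers (b), giving $\|T_\lambda^{n_\bk}S_{\mu_\bk}^{n_\bk}v-v\|<\eta$ for $M$ large. \emph{Perturbation:} applying (a) at $n=0$ to $v$ yields $\|S_{\mu_\bk}^{n_\bk}v\|\le C/n_\bk^\beta$, so $\|x-u\|\lesssim(\mathrm{const}/N_0^\beta)(r/q^\beta)^M\to 0$ as $M\to\infty$ provided $q^\beta>r$; the hypothesis $\beta>\alpha\gamma$ opens precisely the window $r^{1/\beta}<q\le r^{1/(\alpha\gamma)}$. \emph{Cross-terms:} group the indices $\bj\ne\bk$ by the length $\ell$ of their common prefix with $\bk$ (at most $r^{M-\ell}$ such $\bj$'s); (a) is applicable because $\|\lambda-\mu_\bj\|\le C(\Lambda^{(i)})r^{-\ell/\gamma}$ when $\ell\ge 1$ (and $\le D/(2r)^\alpha$ at $\ell=0$, thanks to the reduction), while $|n_\bk-n_\bj|^\alpha/\max(n_\bk,n_\bj)^\alpha\gtrsim((q-1)/(2rq))^\alpha q^{-\ell\alpha}\gtrsim r^{-\ell/\gamma}$ by $q^\alpha\le r^{1/\gamma}$. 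The resulting bound $C/|n_\bk-n_\bj|^\beta$ sums to a geometric series of ratio $r/q^\beta<1$ and total $\le\mathrm{const}/N_0^\beta<\eta$ for $N_0$ large.

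I expect the main obstacle to be the simultaneous selection of the base $q$: (b) forces $q\le r^{1/(\alpha\gamma)}$, the cross-term summability forces $q>r^{1/\beta}$, and the separation estimate forces $q\ge 2r$. The inequality $\beta>\alpha\gamma$ is exactly what keeps the first two windows compatible, while the third constraint is what forces the preliminary coarsening of the tree. Once $q$, $N_0$ and $M$ are tuned in this order, combining (i)--(iii) gives $x\in B(u,\eta)\subset U$ and $T_\lambda^{n_\lambda}x\in B(v,3\eta)\subset V$ (up to a harmless rescaling of $\eta$) for every $\lambda\in\Lambda$, completing the Baire step.
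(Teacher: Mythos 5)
Your proposal is essentially correct, and it reaches the conclusion by a genuinely different technical device than the paper. The paper feeds the tree $(\Lambda_\bk)_{\bk\in I_r^m}$ into the Basic Criterion (Theorem \ref{basic}) via a covering lemma (Lemma \ref{lem:covering}) whose heart is a \emph{recursively} defined sequence $n_{k_1,\dots,k_m}=\lfloor(1-\rho^{p/\alpha})^{-1}n_{k_1,\dots,k_p-1,r,\dots,r}\rfloor+A$; the delicate point, handled by the induction of Lemmas \ref{lem:sequence}--\ref{lemmabis}, is that the whole sequence stays in $[n_1,2c_1n_1]$ with $c_1$ independent of $m$, which is what makes the diagonal condition (b) and the jump condition (a) compatible. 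You replace this by the closed formula $n_\bk=N_0\sum_i k_i q^{M-i}$, reading $\bk$ as a base-$q$ integer: the range bound ($n_{\max}/n_{\min}\le r$) and the separation estimate ($|n_\bk-n_\bj|\ge\tfrac{N_0}{2}q^{M-\ell-1}$ when the first disagreement is at digit $\ell+1$, thanks to $q\ge 2r$) become one-line computations, and the constraint $q^\alpha\le r^{1/\gamma}$ plays exactly the role of the paper's $\rho^{1/\alpha}r<1$, with $\beta>\alpha\gamma$ opening the window $r^{1/\beta}<q\le r^{1/(\alpha\gamma)}$ just as it makes the paper's infinite product converge. Your coarsening of the tree from $I_r^m$ to $I_{r^s}^m$ to fit an integer $q$ into the window is legitimate and mirrors the paper's normalization \eqref{eq:diamlambda}. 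Three points deserve tightening. First, your decomposition of $T_\lambda^{n_\bk}x$ has a fourth term, $T_\lambda^{n_\bk}u$, which you never estimate; it is controlled by hypothesis (a) with $n=0$ and $\mu=\lambda$ (giving $\|T_\lambda^{n_\bk}u\|\le C/n_\bk^\beta$), but it must be said. Second, for the diagonal step you need $q^\alpha<r^{1/\gamma}$ \emph{strictly}: with equality, $\|\lambda-\mu_\bk\|\,n_\bk^\alpha\approx C(\Lambda)(N_0r/(q-1))^\alpha$ does not tend to $0$ as $M\to\infty$, so "for $M$ large" would fail once $N_0$ has been taken large for the cross-terms; choosing $q$ in the interior of the window (possible for $s$ large) fixes this. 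Third, the constant in your preliminary reduction should be $D\bigl(\frac{q-1}{2rq}\bigr)^\alpha$ rather than $D/(2r)^\alpha$ to cover the $\ell=0$ cross-terms. With these repairs the argument is complete, and it is arguably more transparent than the paper's induction, at the price of an extra coarsening step that the paper avoids.
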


In particular we get the following corollary, which can be seen as the desired converse of Corollary \ref{cor:sizenalpha}.
\begin{corollary}\label{cor:positifnalpha}
Let $X=\ell_p(\NN)$, $p\in[1,+\infty)$ or $X=c_0(\NN)$, $\alpha\in(0,1]$, let $(w(a))_{a>0}$ be defined by $w_1(a)\cdots w_n(a)=\exp(an^\alpha)$ for all $n\in\NN$
and let $\Lambda\subset (0,+\infty)^d$. Assume that $\dboxhom(\Lambda)<1/\alpha$. Then $(B_{w(\lambda(1))}\times\cdots\times B_{w(\lambda(d))})_{\lambda\in\Lambda}$ admits a common hypercyclic vector.
\end{corollary}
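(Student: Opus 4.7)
The approach is to verify the hypotheses of Theorem~\ref{thm:multiprecised} for the dense set $\mathcal D$ consisting of finite linear combinations of the canonical basis vectors of $X^d$. I would first reduce to the case where $\Lambda$ is a compact subset of some box $[a_0,a_1]^d$ with $0<a_0<a_1$: writing $\Lambda=\bigcup_{k\geq 1}\Lambda_k$ with $\Lambda_k$ the closure in $\RR^d$ of $\Lambda\cap[1/k,k]^d$, each $\Lambda_k$ inherits a covering structure yielding a homogeneous box dimension at most $\dboxhom(\Lambda)$, and Baire's theorem allows one to pass from dense $G_\delta$ sets $\bigcap_{\lambda\in\Lambda_k}HC(T_\lambda)$ to $\bigcap_{\lambda\in\Lambda}HC(T_\lambda)$. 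For such a compact $\Lambda$, I then pick $\gamma\in(\dboxhom(\Lambda),1/\alpha)$, any $\beta>\alpha\gamma$, and any $D\in(0,a_0)$.

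Everything reduces to coordinate-wise estimates on basis vectors. For a single coordinate, write $B_a$ for the shift with $w_1(a)\cdots w_n(a)=\exp(an^\alpha)$ and $S_a$ its canonical right-inverse; direct computation yields
\[
B_b^n S_a^n e_j = \exp\bigl((b-a)[(j+n)^\alpha-j^\alpha]\bigr)\,e_j,
\]
\[
B_b^{n+k}S_a^n e_j = 0 \quad\text{whenever } k>j,
\]
\[
B_b^n S_a^{n+k} e_j = \exp\bigl(aj^\alpha+(b-a)(j+n+k)^\alpha-b(j+k)^\alpha\bigr)\,e_{j+k}.
\]
Condition (b) is immediate from the first identity: the subadditivity $(j+n)^\alpha-j^\alpha\leq n^\alpha$ (valid because $\alpha\in(0,1]$) combined with $|b-a|\leq\tau/n^\alpha$ forces $\|B_b^nS_a^n e_j-e_j\|\leq e^\tau-1$, which is as small as desired by choosing $\tau$ small. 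The first inequality of (a) is also immediate: for $u\in\mathcal D$ supported in $\{0,\ldots,m\}^d$, the second identity gives $T_\lambda^{n+k}S_\mu^n u=0$ as soon as $k>m$, so it suffices to take $N>m$.

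The core estimate is the second inequality of (a). Using subadditivity once more, $(j+n+k)^\alpha\leq j^\alpha+(n+k)^\alpha$, together with $|b-a|\leq D\,k^\alpha/(n+k)^\alpha$, one obtains
\[
(b-a)(j+n+k)^\alpha \leq D\,k^\alpha + D\,m^\alpha,
\]
while $b(j+k)^\alpha\geq a_0 k^\alpha$. The exponent in the third identity is therefore at most $-(a_0-D)k^\alpha+C_u$ for a constant $C_u$ depending only on $u$. Since $D<a_0$, this yields super-polynomial (in fact stretched-exponential) decay in $k$, so the bound $C/k^\beta$ holds for any $\beta$ and every $k\geq N(u)$. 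Summing over the finitely many basis vectors in the support of $u$ and over the $d$ coordinates gives condition (a) in full generality, and Theorem~\ref{thm:multiprecised} concludes the proof.

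The main obstacle lies in the bookkeeping of the exponent in the third identity: it must stay bounded above when $|b-a|$ is as large as $Dk^\alpha/(n+k)^\alpha$, and this rests on the precise balance between $-b(j+k)^\alpha$ and $(b-a)(j+n+k)^\alpha$. The constraint $D<a_0$ is what makes these two terms balance, which is precisely why one needs to first reduce to a compact parameter set bounded away from the boundary of $(0,+\infty)^d$.
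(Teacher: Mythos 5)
Your proof is correct and follows essentially the same route as the paper: the paper deduces the corollary from Corollary \ref{cor:critere1}, which is itself obtained by verifying conditions (a) and (b) of Theorem \ref{thm:multiprecised} via the $C_1n^\alpha$-Lipschitz bound on $f_n(a)=\sum_j\log w_j(a)$ and the lower bound $w_1(a)\cdots w_n(a)\geq C_2\exp(C_3n^\alpha)$, with the same smallness constraint on $D$ relative to the growth constant that you express as $D<a_0$. Your version merely specializes these estimates to the explicit weight $f_n(a)=an^\alpha$, so the mechanism is identical.
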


\begin{example}\label{ex:expo}
Let  $X=\ell_p(\NN)$, $p\in[1,+\infty)$ or $X=c_0(\NN)$, $\alpha\in (0,1/d)$ and let $(w(a))_{a>0}$ be the family of weights defined by $w_1(a)\cdots w_n(a)=\exp(a n^\alpha)$ (resp. $w_n(a) = 1 + \frac{a}{n^{1-\alpha}}$) for all $n\geq 1$. Then $(B_{w(\lambda(1))}\times\cdots\times B_{w(\lambda(d))})_{\lambda\in (0,+\infty)^d}$ admits a common hypercyclic vector.
\end{example}

Theorem \ref{thm:multiprecised} can also be applied to H\"older curves leading to a nice complement to Theorem~\ref{thm:lipschitz}. Indeed, let $\alpha\in (0,1]$. A compact set $\Lambda\subset\RR^d$ is called an {\it $\alpha$-H\"older curve} if $\Lambda=f(I)$ for some function $f:[0,1]\to \RR^d$ satisfying
$$\exists C>0,\ \forall (s,t)\in[0,1]^2,\ \|f(s)-f(t)\|\leq C|s-t|^\alpha.$$
It turns out that any $\alpha$-H\"older curve has homogeneous box dimension at most $1/\alpha$. Therefore, we will also obtain the following example.

\begin{example}\label{ex:holder}
Let $X=\ell_p(\NN)$, $p\in[1,+\infty)$ or $X=c_0(\NN)$. Let $\Lambda\subset (0,+\infty)^d$ be a $\beta$-H\"older curve for some $\beta\in(0,1]$. Let $\alpha\in (0,\beta)$ and let  $(w(a))_{a>0}$ be the family of weights defined by $w_1(a)\cdots w_n(a)=\exp(a n^\alpha)$ (resp. $w_n(a) = 1 + \frac{a}{n^{1-\alpha}}$) for all $n\geq 1$. Then $(B_{w(\lambda(1))}\times\cdots\times B_{w(\lambda(d))})_{\lambda\in\Lambda}$ admits a common hypercyclic vector.
\end{example} 

%%%%%%%%%%BASIC CRITERION %%%%%%%%%%

\subsection{The Basic Criterion}

Almost all results of common hypercyclicity rely on the construction of a suitable covering of the parameter space $\Lambda$ and on an associated sequence $(n_k)$. What we need is contained in the following basic criterion (see \cite[Lemma 7.12]{BM09}).

\begin{theorem}[Basic Criterion]\label{basic}
Let $\Lambda'$ be a topological space. 
Let $(T_{\lambda})_{\lambda\in \Lambda'}$ be a family of operators on $X$ such that 
\begin{itemize}
\item the map $(\lambda,x)\mapsto T_{\lambda}(x)$ is continuous from $\Lambda'\times X \to X$;
\item there exist a dense set $\mathcal{D}\subset X$ and maps $S_{\lambda}:\mathcal{D}\to \mathcal{D}$ satisfying $T_{\lambda}S_{\lambda}x=x$ for every $x\in \mathcal{D}$ and every $\lambda\in\Lambda'$.
\end{itemize}
If $\Lambda\subset \Lambda'$ is $\sigma$-compact and if for every compact set $K\subset \Lambda$, every pair $(u,v)\in \mathcal{D}\times\mathcal{D}$, every $\varepsilon>0$, there exist $\lambda_1,\dots,\lambda_q\in \Lambda'$, sets $\Lambda_1,\dots,\Lambda_q\subset \Lambda$ and positive integers $n_1,\dots,n_q$ such that
\begin{enumerate}
\item[(BC1)] $\bigcup_k \Lambda_k\supset K$,
\item[(BC2)] $ \|\sum_k S_{\lambda_k}^{n_k}v\|<\varepsilon$,
\item[(BC3)] for all $k=1,\dots,q$ and all $\lambda\in\Lambda_k$, $\|\sum_{j\ne k} T_{\lambda}^{n_k}S_{\lambda_j}^{n_j}v\|<\varepsilon$,
\item[(BC4)] for all $k=1,\dots,q$ and all $\lambda \in \Lambda_k$, $\|T_{\lambda}^{n_k}u\|<\varepsilon$,
\item[(BC5)] for all $k=1,\dots,q$ and all $\lambda\in\Lambda_k$,  $\|T_{\lambda}^{n_k}S_{\lambda_k}^{n_k}v-v\|<\varepsilon$,
\end{enumerate}
then $\bigcap_{\lambda\in \Lambda} HC(T_{\lambda})$ is a dense $G_{\delta}$ subset of $X$.
\end{theorem}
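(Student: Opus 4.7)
The plan is a standard Baire category argument for which the conditions (BC1)-(BC5) are custom-tailored. Write $\Lambda = \bigcup_{m \geq 1} K_m$ with each $K_m$ compact, and fix a countable base $(V_j)_{j \geq 1}$ of nonempty open subsets of $X$. For a compact $K \subset \Lambda$ and a nonempty open $V \subset X$, set
\[A(K,V) = \{x \in X : \forall \lambda \in K,\ \exists n \geq 0,\ T_\lambda^n x \in V\}.\]
The identity $\bigcap_{\lambda \in \Lambda} HC(T_\lambda) = \bigcap_{m, j \geq 1} A(K_m, V_j)$ reduces the problem to proving that each $A(K, V)$ is open and dense; Baire's theorem then yields a dense $G_\delta$.

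For openness, fix $x \in A(K, V)$; for every $\lambda \in K$ pick $n_\lambda$ with $T_\lambda^{n_\lambda} x \in V$ and use the joint continuity of $(\mu, y) \mapsto T_\mu^{n_\lambda} y$ (inherited from that of $(\mu, y) \mapsto T_\mu y$) to produce open neighborhoods $U_\lambda \ni x$ in $X$ and $W_\lambda \ni \lambda$ in $\Lambda'$ such that $T_\mu^{n_\lambda} y \in V$ on $U_\lambda \times W_\lambda$. Compactness of $K$ gives a finite subcover $W_{\lambda_1}, \dots, W_{\lambda_p}$, and $U := \bigcap_{i \leq p} U_{\lambda_i}$ is then an open neighborhood of $x$ lying in $A(K, V)$. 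This tube-style extraction is the one step that is not a mechanical translation of (BC1)-(BC5); the rest of the argument is dictated by them.

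For density, given a nonempty open $W_0 \subset X$, choose $u \in W_0 \cap \mathcal D$, $v \in V \cap \mathcal D$, and $\varepsilon > 0$ small enough that $B(u, \varepsilon) \subset W_0$ and $B(v, 3\varepsilon) \subset V$. Apply the hypothesis to $K$, $(u, v)$, $\varepsilon$ to obtain $\lambda_1, \dots, \lambda_q \in \Lambda'$, sets $\Lambda_1, \dots, \Lambda_q \subset \Lambda$ and integers $n_1, \dots, n_q$ satisfying (BC1)-(BC5), and form the candidate
\[x := u + \sum_{k=1}^q S_{\lambda_k}^{n_k} v.\]
Condition (BC2) gives $\|x - u\| < \varepsilon$, so $x \in W_0$. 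For any $\lambda \in K$, pick $k$ with $\lambda \in \Lambda_k$ via (BC1) and decompose
\[T_\lambda^{n_k} x - v = T_\lambda^{n_k} u + \bigl(T_\lambda^{n_k} S_{\lambda_k}^{n_k} v - v\bigr) + \sum_{j \neq k} T_\lambda^{n_k} S_{\lambda_j}^{n_j} v.\]
Conditions (BC4), (BC5), (BC3) respectively bound the three summands by $\varepsilon$ each, so $T_\lambda^{n_k} x \in B(v, 3\varepsilon) \subset V$ and thus $x \in A(K, V) \cap W_0$.

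Apart from the openness tube argument, the whole proof is bookkeeping: the five conditions are engineered precisely so that the single vector $x = u + \sum_k S_{\lambda_k}^{n_k} v$ lies close to $u$ (via BC2) while having an iterate close to $v$ under \emph{every} $T_\lambda$ with $\lambda \in K$ (via BC3-BC5). I expect no genuine obstacle beyond verifying openness carefully; the density step is essentially forced by (BC1)-(BC5).
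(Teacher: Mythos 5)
Your proof is correct and is precisely the standard Baire-category argument for this criterion; the paper does not reprove the statement but cites it as \cite[Lemma 7.12]{BM09}, whose proof proceeds exactly as you do (reduction to the open dense sets $A(K_m,V_j)$, openness via joint continuity of $(\lambda,x)\mapsto T_\lambda^n x$ and compactness of $K$, density via the perturbation $x=u+\sum_k S_{\lambda_k}^{n_k}v$ controlled by (BC1)--(BC5)).
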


%%%%%%%%%%%%%%%%%%%%%%%%%%%%%%%%%%%%%%%%%
%%%%%%%%% WEIGHTED SHIFTS %%%%%%%%%%%%%%%%%%%%%%%
%%%%%%%%%%%%%%%%%%%%%%%%%%%%%%%%%%%%%%%%%

\section{Characterization of families of products of weighted shifts admitting a common hypercyclic vector}
\label{sec:ws}

In this section, we work in the context of a Fréchet sequence space $X$, namely $X$ is a Fréchet space endowed with a family of seminorms $(\|\cdot\|_p)$, contained in 
the space $\mathbb C^\mathbb N$ of all complex sequences and such that each coordinate functional $(x_n)_n\mapsto x_m$ is continuous. Such a space can be endowed with an $F$-norm $\|\cdot\|$ defining the topology of $X$ (see \cite[Section 2.1]{GePeBook}).
Such an $F$-norm can be defined by the formula
\[ \|x\|=\sum_{p=1}^{+\infty}\frac1{2^p}\min(1,\|x\|_p). \]
In particular, an $F$-norm satisfies the triangle inequality and the inequality
\begin{equation}
\forall \lambda\in\CC,\ \forall x\in X,\ \|\lambda x\|\leq (|\lambda|+1)\|x\|,
\end{equation}
a property which replaces the positive homogeneity of the norm.
We will also need the following property of a Fréchet sequence space in which $(e_n)$ is an unconditional basis (see \cite[Theorem 3.9]{GuKa81}).

\begin{enumerate}
\item[(UNC)] If $(x_n)\in X$ and $(\alpha_n)\in\ell_\infty$, then $(\alpha_n x_n)\in X$. Moreover, for all $\veps>0$, for all  $M>0$, there exists $\delta>0$ such that, 
for all $(x_n)\in X$ with $\|(x_n)\|\leq \delta$, for all sequence $(\alpha_n)\in\ell_\infty$ with $\|(\alpha_n)\|_\infty\leq M$, then $(\alpha_n x_n)\in X$ and $\|(\alpha_n x_n)\|<\veps$.
\end{enumerate}

\begin{theorem}\label{thm:carac}
Let $X$ be a Fréchet sequence space in which $(e_n)$ is an unconditional basis. Let $I\subset\mathbb R$ be a nonempty interval and let $\Lambda\subset I^d$ be $\sigma$-compact. Let $(B_{w(a)})_{a\in I}$ be a continuous family of weighted shifts on $X$ and assume that $a\in I\mapsto w_n(a)$ is nondecreasing. Assume also that there exist a nondecreasing map $F:\mathbb N\to\mathbb N$ and $c,C>0$ such that, for all $n\geq 1$, denoting by $f_n(a)=\sum_{k=1}^n \log(w_k(a))$, 
\[ \forall (a,b)\in I^2,\ cF(n) |a-b|\leq |f_n(a)-f_n(b)|\leq CF(n) |a-b| \]
\[ \forall (a,b)\in I^2,\ \frac{w_n(a)}{w_n(b)}\geq c. \]
Then the following assertions are equivalent:
\begin{enumerate}[(a)]
\item $(B_{w(\lambda(1))}\times\cdots \times B_{w(\lambda(d))})_{\lambda\in\Lambda}$ possesses a dense $G_{\delta}$ set of common hypercyclic vectors in $X^d$.
\item $(B_{w(\lambda(1))}\times\cdots \times B_{w(\lambda(d))})_{\lambda\in\Lambda}$ admits a common hypercyclic vector in $X^d$.
\item For all $\tau>0$, for all $N\geq 1$, for all $\veps>0$, for all $K\subset\Lambda$ compact, there exist
$N\leq n_1<n_1+N\leq n_2<\dots<n_{q-1}+N\leq n_{q}$ and $(\lambda_k)_{k=1,\dots,q}\in I^d$ such that
\begin{enumerate}[(i)]
\item $K\subset \bigcup_{k=1}^q \prod_{i=1}^d \left[\lambda_k(i)-\frac\tau{F(n_k)},\lambda_k(i)\right]$
\item For all $i=1,\dots,d$,
\[ \left\|\sum_{k=1}^q\frac{1}{w_1(\lambda_k(i))\cdots w_{n_k}(\lambda_k(i))}e_{n_k}\right\|<\veps. \]
\item For all $k=1,\dots,q-1$, for all $i=1,\dots,d$, for all $l=0,\dots,N$,
\[ \left\|\sum_{j=k+1}^{q}\frac{w_{n_j-n_k+l+1}(\lambda_k(i))\cdots w_{n_j+l}(\lambda_k(i))}{w_{l+1}(\lambda_j(i))\cdots
w_{n_j+l}(\lambda_j(i))}e_{n_j-n_k+l}\right\|<\veps. \]
\end{enumerate}
\end{enumerate}
\end{theorem}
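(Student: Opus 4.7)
\emph{Proof plan.} The implication (a) $\Rightarrow$ (b) is immediate, so we concentrate on (c) $\Rightarrow$ (a) and (b) $\Rightarrow$ (c). For (c) $\Rightarrow$ (a), the plan is to apply the Basic Criterion (Theorem~\ref{basic}) with $\mathcal D$ equal to the set of finitely supported $d$-tuples and $S_\lambda$ the natural $d$-tuple of forward shifts. Given $K\subset\Lambda$ compact, $u,v\in\mathcal D$ supported in $\{0,\dots,N_0\}$, and $\veps>0$, we invoke (c) with $N$ slightly larger than $N_0$, with $\tau$ small (chosen from the $f_n$-Lipschitz bound to secure (BC5)), and with $\veps'\ll\veps$, obtaining $(\lambda_k,n_k)$. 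We then take $\Lambda_k=K\cap\prod_i[\lambda_k(i)-\tau/F(n_k),\lambda_k(i)]$. Condition (i) yields (BC1); linearity in the coefficients of $v$ reduces (BC2) to (ii). For (BC3) we split $j\neq k$: the $j<k$ terms vanish because $T_\lambda^{n_k}e_{n_j+l}=0$ whenever $n_k>n_j+l$, which follows from the gap $n_{k}\geq n_{k-1}+N$; the $j>k$ terms are bounded by (iii) after replacing $T_\lambda^{n_k}$ by $T_{\lambda_k}^{n_k}$ with an error controlled by $|f_m(\lambda)-f_m(\lambda_k)|\leq C\tau F(m)/F(n_k)$ for the relevant $m$. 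Condition (BC4) is automatic since $u\in\mathcal D$ has finite support and $n_k\geq N$, while (BC5) follows from $|f_{n_k}(\lambda)-f_{n_k}(\lambda_k)|\leq C\tau$, which pushes the product $\prod_{j=1}^{n_k}w_j(\lambda)/w_j(\lambda_k)$ as close to $1$ as we wish.

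For (b) $\Rightarrow$ (c), let $u=(u^{(1)},\dots,u^{(d)})$ be a common hypercyclic vector and fix $K,\tau,N,\veps$. Pick a target $v\in X^d$ whose every component equals $v^{(i)}=\sum_{l=0}^N e_l$, and an auxiliary $\delta\ll\veps$. Hypercyclicity of each $T_\lambda$ ensures that $\{n:\|T_\lambda^n u-v\|<\delta\}$ is infinite for each $\lambda\in K$. The first task is to propagate this approximation to a tube: using that $a\mapsto w_n(a)$ is nondecreasing, so that the tail coordinates of $T_\mu^n u$ are dominated by those of $T_\lambda^n u$ whenever $\mu\leq\lambda$ componentwise, together with the $f_n$-Lipschitz bound, one shows that for $\tau$ small enough and $\mu\in\prod_i[\lambda(i)-\tau/F(n),\lambda(i)]$, the coordinates of index $\leq N$ of $T_\mu^n u$ remain uniformly close to those of $v$ while the tail beyond $N$ stays small. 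Covering $K$ by such tubes, extracting a finite subcover, and choosing the admissible $n_\lambda$'s as large as needed yields indices $(\lambda_k,n_k)$ satisfying both the covering condition (i) and the gap $n_{k+1}\geq n_k+N$.

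The most delicate step, and the main obstacle, is the verification of (ii) and (iii); this is where property (UNC) is decisive. For $0\leq l\leq N$, the coordinate-$l$ approximation $(T_{\lambda_k(i)}^{n_k}u^{(i)})_l\approx 1$ translates into
\[ u^{(i)}_{n_k+l}\approx\frac{1}{w_{l+1}(\lambda_k(i))\cdots w_{n_k+l}(\lambda_k(i))}, \]
so that the left-hand side of (ii) can be rewritten as $\sum_k \alpha_k u^{(i)}_{n_k}e_{n_k}$ for some bounded sequence $(\alpha_k)$. Since the $n_k$ can be chosen arbitrarily large, $\sum_k u^{(i)}_{n_k}e_{n_k}$ lies in the tail of the expansion of $u^{(i)}$ in the unconditional basis $(e_n)$ and hence has arbitrarily small norm; (UNC) transfers this to the weighted sum. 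Likewise, the left-hand side of (iii) rewrites as $\sum_{j>k}\alpha_{j,l}(T_{\lambda_k(i)}^{n_k}u^{(i)})_{n_j-n_k+l}e_{n_j-n_k+l}$, a subvector of $T_{\lambda_k(i)}^{n_k}u^{(i)}$ supported at positions $\geq n_{k+1}-n_k\geq N$; since $T_{\lambda_k(i)}^{n_k}u^{(i)}$ is $\delta$-close to $v^{(i)}=\sum_{l=0}^N e_l$, whose support lies in $[0,N]$, the relevant tail has norm less than $\delta$, and (UNC) again concludes. The whole strategy hinges on choosing the target $v$ with support $[0,N]$ large enough to accommodate the index $l$ in (iii), so that every weight product appearing in (ii) and (iii) can be matched coordinate-by-coordinate with a genuine coefficient of $u$.
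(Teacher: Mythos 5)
Your (c)$\Rightarrow$(a) follows the paper's route (Basic Criterion with the shifted exponents, unconditionality to pass from (ii)--(iii) to (BC2)--(BC3), the upper Lipschitz bound on $f_n$ for (BC5)), and your use of (UNC) to extract (ii) and (iii) from $\|T_{\lambda_k}^{n_k}u-v\|<\delta$ is also the paper's. The direction (b)$\Rightarrow$(c), however, has two genuine gaps, and both sit exactly where you never use two of the hypotheses. First, the covering (i): you cover $K$ by the one-sided tubes $\prod_i[\lambda(i)-\tau/F(n),\lambda(i)]$ on which the approximation propagates \emph{downward}. Each $\lambda\in K$ is the top corner of its own tube, so these are closed sets containing no neighbourhood of the point they are anchored at, and compactness yields no finite subcover (think of $[0,1]$ covered by $\{[\lambda-\veps_\lambda,\lambda]\}_\lambda$). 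Shifting the corner upward to restore openness breaks your derivation of (ii)--(iii): the monotonicity of $a\mapsto w_n(a)$ dominates the tail of $T_\mu^nu$ only for $\mu\leq\lambda$, and the Lipschitz control of the coordinate ratios involves $F(n+l)/F(n)$ with $l$ unbounded, which need not be bounded for a general nondecreasing $F$. The paper instead covers $K$ by the genuinely open sets $\Lambda_k=\{\lambda\in K:\ \|T_\lambda^{n_k}u-v\|<\delta\}$, places $\lambda_k$ at the coordinatewise supremum of $\Lambda_k$, and proves the \emph{containment} $\Lambda_k\subset\prod_i[\lambda_k(i)-\tau/F(n_k),\lambda_k(i)]$ by a rigidity estimate resting on the lower bound $|f_n(a)-f_n(b)|\geq cF(n)|a-b|$ --- a hypothesis your argument never invokes.

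Second, the separation $n_1\geq N$ and $n_{k+1}-n_k\geq N$ cannot be obtained by ``choosing the admissible $n_\lambda$'s as large as needed'': the tube width $\tau/F(n)$ depends on $n$, so enlarging some $n_k$ after extracting the finite subcover destroys the covering, and distinct points of $K$ may have good return times that are arbitrarily close to one another. The paper proves that the set of \emph{all} return times $\{n:\ \exists\lambda\in K,\ \|T_\lambda^nu-v\|<\delta\}$ is automatically $N$-separated, by comparing the coordinates $x_{n_k+N}$ and $x_{n_k+N+1}$ as seen through $T_{\lambda_k}^{n_k}$ and $T_{\lambda_{k+1}}^{n_{k+1}}$ and reaching a contradiction with \eqref{eq:caracF1}--\eqref{eq:caracF2}; this is precisely where the hypothesis $w_n(a)/w_n(b)\geq c$ --- also unused in your proposal --- enters. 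Since both unused hypotheses are doing essential work at these two points, the omissions are not cosmetic and the proof of (b)$\Rightarrow$(c) is incomplete as written.
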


To simplify the notations, we will do the proof only for $d=2$ and we shall denote by $\lambda=(a,b)$ any element of $\mathbb R^2$. For $\lambda=(a,b)\in I^2$, we shall denote by $T_\lambda$ the operator $B_{w(a)}\times B_{w(b)}$ acting on $X\times X$ and by $S_\lambda$ the operator $F_{w^{-1}(a)}\times F_{w^{-1}(b)}$, where $F_{w^{-1}(a)}$ denotes the forward shift associated to the sequence $(w_n^{-1}(a))_n$.

\begin{proof}
We first assume that $(T_\lambda)_{\lambda\in\Lambda}$ admits a common hypercyclic vector. Let $\tau>0$, $N\geq 1$, $\veps>0$ and $K\subset\Lambda$ compact. We set $K_1$ the projection of $K$ onto the first coordinate; $K_1$ is a compact subset of $I$. We consider $0<\eta<\min\left(\frac 12,\frac{c\tau}4\right)$ satisfying also the following two technical conditions:
\begin{equation}\label{eq:caracF1}
\eta<\frac 14\times \frac1{\prod_{k=1}^N \max\left(1,\sup_{a\in K_1}w_k(a)\right)}
\end{equation}
\begin{equation}\label{eq:caracF2}
c\inf_{\substack{a,a'\in K_1\\ l,l'\in\{1,\dots,N+1\}}}\frac{w_l(a)}{w_{l'}(a')}\times\frac{1-\eta}{1+\eta}>\frac{\eta}{1-\eta}.
\end{equation}
By continuity of the first $N+2$ coordinate functionals and by (UNC), we may find $\delta>0$ such that, for all $z=(z_n)\in X$ and all $\alpha=(\alpha_n)\in\ell_\infty$, 
\[ \|(z_n)\|\leq\delta\implies \forall l\in\{0,\dots,N+1\},\ |z_l|<\eta \]
\[ \|(z_n)\|\leq\delta\textrm{ and }\|(\alpha_n)\|\leq 2\implies \|(\alpha_n z_n)\|<\veps. \]

Let $u=(x,y)$ be a common hypercyclic vector for $(T_\lambda)_{\lambda\in\Lambda}$. We may always assume that $\max\{\|x\|,\|y\|\}\leq \delta$ and we set 
\[ v=\left(\sum_{j=0}^N e_j,\sum_{j=0}^N e_j\right). \]
Let $(n_k)$ be an increasing enumeration of 
\[\left\{n\geq 1:\ \left \|T_\lambda^n u-v\right\|<\delta\textrm{ for some }\lambda\in K\right\}. \]
Let $\Lambda_k=\left\{\lambda\in K:\ \left\|T_\lambda^{n_k}u-v\right\|<\delta\right\}$. Since $u$ is a common hypercyclic vector for the family $(T_\lambda)_{\lambda\in K}$, since $K$ is compact and each $\Lambda_k$ is open, there exists $q\geq 1$ such that 
$$K\subset \bigcup_{k=1}^q \Lambda_k.$$
For each $k=1,\dots,q$, we define $a_k$  and $b_k$ by 
\begin{align*}
a_k&:=\sup\left\{a:\ \exists b,\ (a,b)\in \Lambda_k\right\}\\
b_k&:=\sup\left\{b:\ \exists a,\ (a,b)\in \Lambda_k\right\}
\end{align*}
and we set $\lambda_k=(a_k,b_k)$. We first observe that $n_1\geq N$. Indeed, since $\|B_{w(a_1)}^{n_1}x-\sum_{j=0}^N e_j\|\leq \delta$, we know that 
$$w_1(a_1)\cdots w_{n_1}(a_1)|x_{n_1}|\geq 1-\eta>1/2.$$
Assume that $n_1<N$. Then 
$$w_1(a_1)\cdots w_{n_1}(a_1)\leq \prod_{k=1}^N \max\left(1,\sup_{a\in K_1}w_k(a)\right)$$
whereas 
\begin{align*}
w_1(a_1)\cdots w_{n_1}(a_1)&\geq w_1(a_1)\cdots w_{n_1}(a_1)\frac{|x_{n_1}|}{\eta}\\
&>\frac 1{2\eta},
\end{align*}
which contradicts \eqref{eq:caracF1}. We now show that we also have $n_{k+1}-n_k\ge N$ for all $k=1,\dots,q-1$. On the contrary, assume that there exists some $k$ such that $n_{k+1}-n_k< N$. We set $p=n_k+N-n_{k+1}\in\{1,\dots,N-1\}$. 
Then using that $\|B^{n_{k+1}}_{w(a_{k+1})}x-\sum_{j=0}^Ne_j\|\le \delta$, we get
$$\left\{
\begin{array}{rcl}
|w_{p+1}(a_{k+1})\cdots w_{n_k+N}(a_{k+1})x_{n_k+N}-1|&<&\eta\\
|w_{p+2}(a_{k+1})\cdots w_{n_k+N+1}(a_{k+1})x_{n_k+N+1}-1|&<&\eta.
\end{array}\right.$$
Since we also know that $\|B^{n_{k}}_{w(a_{k})}x-\sum_{j=0}^Ne_j\|\le \delta$, we also get
$$\left\{
\begin{array}{rcl}
|w_{N+1}(a_{k})\cdots w_{n_k+N}(a_{k})x_{n_k+N}-1|&<&\eta\\
|w_{N+2}(a_{k})\cdots w_{n_k+N+1}(a_{k})x_{n_k+N+1}|&<&\eta.
\end{array}\right.$$
Taking respective quotients, these inequalities lead to 
$$\frac{w_{N+2}(a_{k})\cdots w_{n_k+N+1}(a_{k})}{w_{p+2}(a_{k+1})\cdots w_{n_k+N+1}(a_{k+1})}\leq\frac{\eta}{1-\eta}$$
$$\frac{w_{N+1}(a_{k})\cdots w_{n_k+N}(a_{k})}{w_{p+1}(a_{k+1})\cdots w_{n_k+N}(a_{k+1})}\geq \frac{1-\eta}{1+\eta}.$$
Consequently we obtain
$$\frac{1-\eta}{1+\eta}\times\frac{w_{n_k+N+1}(a_k)}{w_{n_k+N+1}(a_{k+1})}\times \frac{w_{p+1}(a_{k+1})}{w_{N+1}(a_k)}\leq\frac{\eta}{1-\eta}.$$
This again leads to a contradiction, with \eqref{eq:caracF2}.

Let us now prove (i). We consider $\lambda=(a,b)\in\Lambda_k$ for some $k=1,\dots,q$. The choice of $\delta$ ensures that for any $0\le l\le N$,
$$\left\{
\begin{array}{rcl}
|w_{l+1}(a_k)\cdots w_{n_k+l}(a_k)x_{n_k+l} -1|&<&\eta\\
|w_{l+1}(a)\cdots w_{n_k+l}(a)x_{n_k+l} -1|&<&\eta.
\end{array}\right.$$
Hence, 
\[ |w_1(a_k)\cdots w_{n_k}(a_k)-w_1(a)\cdots w_{n_k}(a)|\cdot |x_{n_k}|<2\eta. \]
On the other hand,
\begin{align*}
&w_1(a_k)\cdots w_{n_k}(a_k)-w_1(a)\cdots w_{n_k}(a)\\
&\quad\quad =w_1(a)\cdots w_{n_k}(a)\left(
\exp\left(\sum_{j=1}^{n_k}\log(w_j(a_k))-\sum_{j=1}^{n_k}\log(w_j(a))\right)-1\right)\\
&\quad\quad\geq w_1(a)\cdots w_{n_k}(a)\left(\exp\left(cF(n_k) (a_k-a)\right)-1\right)\\
&\quad\quad\geq cw_1(a)\cdots w_{n_k}(a) (a_k-a)F(n_k).
\end{align*}
Since we also know that $w_1(a)\cdots w_{n_k}(a) |x_{n_k}|\geq 1/2$, we finally get 
\[ 0\leq a_k-a<\frac{4\eta}{cF(n_k)}<\frac{\tau}{F(n_k)}. \]
The same is true for the second coordinate and we get (i). In order to prove (ii), we define $(\alpha_n)$ by 
$$\alpha_n=\left\{\begin{array}{cl}
\frac{1}{w_1(a_k)\cdots w_{n_k}(a_k)x_{n_k}}&\textrm{ if $n=n_k$ for some $k=1,\dots,q$}\\
0&\textrm{ otherwise.}\\
\end{array}\right.$$
Observe that $\|\alpha\|_\infty\leq 2$, hence
\[ \left\|\sum_{k=1}^ q \frac1{w_1(a_k)\cdots w_{n_k}(a_k)}e_{n_k}\right\|=\left\|\sum_{n=0}^{+\infty}\alpha_n x_n e_n\right\|<\veps. \]

It remains to prove (iii). We fix $k=1,\dots,q-1$ and $l=0,\dots,N$, and we now set
$$\beta_n=\left\{\begin{array}{cl}
\frac{1}{w_{l+1}(a_j)\cdots w_{n_j+l}(a_j)x_{n_j+l}}&\textrm{ if $n=n_j-n_k+l$ for some $j\geq k+1$},\\
0&\textrm{otherwise}.
\end{array}\right.$$
Again $\|\beta\|_\infty\leq 2$ and writing $B_{w(a_k)}^{n_k}(x)-\sum_{j=0}^N e_j$ as $(z_n)_n$, one gets 
$$\sum_{n}\beta_n z_n e_n=\sum_{j=k+1}^q \frac{w_{n_j-n_k+l+1}(a_k)\cdots w_{n_j+l}(a_k)}{w_{l+1}(a_j)\cdots w_{n_j+l}(a_j)}e_{n_j-n_k+l}$$
since $n_j-n_k>N$ for all $j\geq k+1$. The result follows again from the choice of $\delta$.

\smallskip

It remains to show that $(c)$ implies $(a)$ since $(a)\Rightarrow (b)$ is obvious. To this end, we shall apply the Basic Criterion. Let $K\subset \Lambda$ be compact, let $\mathcal D\subset X^2$ be the set of couples of vectors with finite support and let
$(u,v)\in\mathcal D\times \mathcal{D}$. We shall write
$$u(1)=\sum_{l=0}^N u_l e_l\textrm{ and }v(1)=\sum_{l=0}^N v_l e_l$$
for some $N\geq 0$. We fix $\veps>0$ and $\tau>0$ (conditions on $\veps$ and $\tau$ will be imposed later) and we consider the two sequences $(n_k)_{k=1,\dots,q}$ and $(\lambda_k)_{k=1,\dots,q}$ given by (c) with $\lambda_k:=(a_k,b_k)$. We set $\Lambda_k=K\cap ([a_k-\tau/F(n_k),a_k]\times [b_k-\tau/F(n_k),b_k])$ so that $\bigcup_k \Lambda_k\supset K$ and we show that the assumptions of the Basic Criterion are satisfied for the sequence $m_k=n_k-N$. First, we observe that
$$\left\|\sum_{k=1}^q F_{w^{-1}(a_k)}^{m_k}(v(1))\right\|\leq \sum_{l=0}^N (|v_l|+1)\left\|\sum_{k=1}^q \frac{1}{w_{l+1}(a_k)\cdots w_{n_k-(N-l)}(a_k)}e_{n_k-(N-l)}\right\|.$$
We fix $\tilde a\in I$ and we claim that for all $l=0,\dots,N$,
$$\sum_{k=1}^q \frac{1}{w_{l+1}(a_k)\cdots w_{n_k-(N-l)}(a_k)}e_{n_k-(N-l)}=
B_{w(\tilde a)}^{N-l}\left(\sum_{k=1}^q \frac{x_{k,l}}{w_{1}(a_k)\cdots w_{n_k}(a_k)}e_{n_k}\right)$$
for some sequence $(x_{k,l})_k\in\ell_\infty$ with 
$$\|(x_{k,l})_k\|_\infty\leq \left(\frac{M}c\right)^N,$$
where $M=\max\{1,|w_j(a)|:0\leq j\leq N, (a,b)\in K\textrm{ for some }b\}$. Provided this has been shown, it is easy to adjust $\veps$ so that (BC2) is satisfied, using the continuity of $B_{w(\tilde a)}$ and the unconditionality of $(e_n)$. The proof of the claim follows
from a rather straightforward computation:
\begin{align*}
&\sum_{k=1}^q \frac1{w_{l+1}(a_k)\cdots w_{n_k-(N-l)}(a_k)}e_{n_k-(N-l)}\\
&=\sum_{k=1}^q w_1\cdots w_l(a_k) \frac{w_{n_k-(N-l)+1}(a_k)\cdots w_{n_k}(a_k)}{w_{n_k-(N-l)+1}(\tilde a)\cdots w_{n_k}(\tilde a)}B_{w(\tilde a)}^{N-l}\left(\frac{1}{w_{1}(a_k)\cdots w_{n_k}(a_k)}e_{n_k}\right).
\end{align*}
The proof that the other conditions of the Basic Criterion are satisfied is rather similar. Indeed, for $k=1,\dots,q$ and $(a,b)\in\Lambda_k$, 
\begin{align*}
\sum_{j\neq k} B_{w(a)}^{m_k}F_{w^{-1}(a_j)}^{m_j}(v(1))&=\sum_{j=k+1}^q B_{w(a)}^{m_k}F_{w^{-1}(a_j)}^{m_j}(v(1))\\
&=\sum_{j=k+1}^q \sum_{l=0}^N v_l\frac{w_{n_j-n_k+l+1}(a)\cdots w_{n_j+l-N}(a)}
{w_{l+1}(a_j)\cdots w_{n_j+l-N}(a_j)}e_{n_j-n_k+l}.
\\
%&=\sum_{j=k+1}^q \sum_{l=0}^N v_l\frac{w_{n_j-n_k+l+1}(a)\cdots w_{n_j+l}(a)}
%{w_{l+1}(a_j)\cdots w_{n_j+l}(a_j)}\frac{w_{n_j+l+1}(a_j)\cdots w_{n_j+l-N}(a_j)}{w_{n_j+l+1}(a)\cdots w_{n_j+l-N}(a)}e_{n_j-n_k+l}
\end{align*}
For each $0\le l\le N$, we now set
$$\alpha^{(l)}_n=\left\{\begin{array}{cl}
\frac{w_{n_j-n_k+l+1}(a)\cdots w_{n_j+l}(a)}{w_{n_j-n_k+l+1}(a_k)\cdots w_{n_j+l}(a_k)}\frac{w_{n_j+l-N+1}(a_j)\cdots w_{n_j+l}(a_j)}{w_{n_j+l-N+1}(a)\cdots w_{n_j+l}(a)}&\substack{\textrm{ if $n=n_j-n_k+l$} \\ \text{for some }j\ge k+1},\\
0&\textrm{ otherwise.}\\
\end{array}\right.$$
Therefore, since $\|\alpha^{(l)}\|_{\infty}\le \frac{1}{c^N}$ (because $a\le a_k$) and since 
\begin{align*}
\left\|\sum_{j\neq k} B_{w(a)}^{m_k}F_{w^{-1}(a_j)}^{m_j}(v(1))\right\|
\le \sum_{l=0}^N(|v_l|+1)\left\|\sum_{n=0}^{\infty} \alpha^{(l)}_n\frac{w_{n_j-n_k+l+1}(a_k)\cdots w_{n_j+l}(a_k)}
{w_{l+1}(a_j)\cdots w_{n_j+l}(a_j)}e_{n}\right\|,
\end{align*}
(BC3) follows from the unconditionality of $(e_n)$ if $\varepsilon$ is sufficiently small. 

%\begin{align*}
%\left\|\sum_{j\neq k} B_{w(a)}^{m_k}F_{w^{-1}(a_j)}^{m_j}(v(1))\right\|
%\le (N+1)\max_{0\le l\le N}\left\|\sum_{j=k+1}^q \sum_{l=0}^N v_l\frac{w_{n_j-n_k+l+1}(a)\cdots w_{n_j+l}(a)}
%{w_{l+1}(a_j)\cdots w_{n_j+l}(a_j)}\frac{w_{n_j+l+1}(a_j)\cdots w_{n_j+l-N}(a_j)}{w_{n_j+l+1}(a)\cdots w_{n_j+l-N}(a)}e_{n_j-n_k+l}\right\|
%\end{align*}

%There is an extra-difficulty here because we have
%$$w_{n_j-n_k+l+1}(a)\cdots w_{n_j-(N-l)}(a)$$
%and not
%$$w_{n_j-n_k+l+1}(a_k)\cdots w_{n_j-(N-l)}(a_k)$$
%so that we cannot apply directly (iii). But this is solved easily using that
%$$w_{n_j-n_k+l+1}(a)\cdots w_{n_j-(N-l)}(a)\leq w_{n_j-n_k+l+1}(a_k)\cdots w_{n_j-(N-l)}(a_k)$$
%and the unconditionality of $(e_n)$.
 We observe that (BC4) is empty and finish the proof by showing (BC5). Let $k=1,\dots,q$ and $(a,b)\in\Lambda_k$. Then 
$$\left\|B_{w(a)}^{m_k}F_{w^{-1}(a_k)}^{m_k}(v(1))-v(1)\right\|
\leq\sum_{l=0}^N (|v_l|+1)\left\|
\left(\frac{w_{l+1}(a)\cdots w_{n_k-(N-l)}(a)}{w_{l+1}(a_k)\cdots w_{n_k-(N-l)}(a_k)}-1\right)e_l\right\|$$
and it is easy to show that this becomes small provided $\tau>0$ becomes small enough,
using that $|a-a_k|\le \frac{\tau}{F(n_k)}$, that $F$ is nondecreasing, that 
\[\frac{w_{l+1}(a)\cdots w_{n_k-(N-l)}(a)}{w_{l+1}(a_k)\cdots w_{n_k-(N-l)}(a_k)}=\frac{\exp(f_{n_k-(N-l)}(a)-f_{n_k-(N-l)}(a_k))}{\exp(f_{l}(a)-f_{l}(a_k))}\]
and that for every $a,b\in I$
$$|f_n(a)-f_n(b)|\leq CF(n) |a-b|.$$
\end{proof}

The previous statement shows clearly that if $(F(n))_n$ grows slowly, there is more hope to get a large set $\Lambda\subset\RR^d$
such that $(T_\lambda)_{\lambda\in\Lambda}$ possesses a common hypercyclic vector. 
Of course, the simplest examples of weights satisfying the assumptions of Theorem \ref{thm:carac} (when $I$ is a bounded interval) are given by $w_1(a)\cdots w_n(a)=\exp(aF(n))$ where $F(n)$ is nondecreasing and $F(n+1)-F(n)$ is bounded, which includes the case of the multiples of the backward shift. For this last example, Theorem \ref{thm:carac} takes the following easier form.

\begin{theorem}\label{thm:caracstandard}
Let $\Lambda\subset(0,+\infty)^d$ be $\sigma$-compact, $X=\ell_p(\mathbb N)$, $p\in[1,+\infty)$ or $X=c_0(\mathbb N)$.
The following assertions are equivalent:
\begin{enumerate}[(a)]
\item $(e^{\lambda(1)}B\times\cdots \times e^{\lambda(d)}B)_{\lambda\in\Lambda}$ admits a common hypercyclic vector.
\item For all $\tau>0$, for all $N\geq 1$, for all $K\subset\Lambda$ compact, there exist
$N\leq n_1<n_1+N\leq n_2<\dots<n_{q-1}+N\leq n_{q}$ and $(\lambda_k)_{k=1,\dots,q}\in (0,+\infty)^d$ such that
\begin{enumerate}[(i)]
\item $K\subset \cup_{k=1}^q \prod_{i=1}^d \left[\lambda_k(i)-\frac\tau{n_k},\lambda_k(i)\right]$
\item for all $k=1,\dots,q-1$, for all $i=1,\dots,d$,
\[ \lambda_{k+1}(i)n_{k+1}-\lambda_k(i)n_k\geq N. \]
\end{enumerate}
\end{enumerate}
\end{theorem}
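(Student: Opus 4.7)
The strategy is to specialize Theorem~\ref{thm:carac} to the scaled unweighted shift, namely $w_n(a) = e^a$ for all $n \geq 1$. Then $f_n(a) = na$, so the hypotheses of Theorem~\ref{thm:carac} hold with $F(n) = n$ and $c = C = 1$ on any bounded subinterval $I \subset (0,+\infty)$. The quotient condition $w_n(a)/w_n(b) = e^{a-b} \geq c$ forces boundedness of the interval, so since $\Lambda \subset (0,+\infty)^d$ may be unbounded, I would first decompose $\Lambda = \bigcup_{M\geq 1} \Lambda_M$ with $\Lambda_M = \Lambda \cap [1/M,M]^d$, apply Theorem~\ref{thm:carac} to each $\Lambda_M$, and take a Baire intersection of the resulting dense $G_\delta$ sets to recover a dense $G_\delta$ of common hypercyclic vectors for $\Lambda$.

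A direct computation yields
\[ w_1(\lambda_k(i)) \cdots w_{n_k}(\lambda_k(i)) = e^{n_k \lambda_k(i)} \qquad\text{and}\qquad \frac{w_{n_j - n_k + l+1}(\lambda_k(i)) \cdots w_{n_j + l}(\lambda_k(i))}{w_{l+1}(\lambda_j(i)) \cdots w_{n_j + l}(\lambda_j(i))} = e^{n_k \lambda_k(i) - n_j \lambda_j(i)}. \]
Since $F(n_k) = n_k$, condition (i) of Theorem~\ref{thm:carac}(c) matches condition (ii)(i) of the present statement, and the whole task reduces to translating the two norm estimates (c)(ii)--(iii) of Theorem~\ref{thm:carac} into the combinatorial spacing condition (ii)(ii) here.

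For $(a) \Rightarrow (b)$: given $\tau, N \geq 1$ and a compact $K \subset \Lambda$, pick $M$ with $K \subset [1/M, M]^d$ and apply Theorem~\ref{thm:carac}(c) on $\Lambda_M$ with the same $\tau, N$ and $\veps = e^{-N}$. Because $n_{j+1} - n_j \geq N \geq 1$, the basis vectors $e_{n_j - n_k + l}$ appearing in (c)(iii) are pairwise distinct as $j$ varies, and in $\ell_p$ (resp.\ $c_0$) each coefficient magnitude is bounded by the norm of the entire sum. Hence $e^{n_k \lambda_k(i) - n_j \lambda_j(i)} \leq \veps$ for every $j > k$, and taking $j = k+1$ gives $\lambda_{k+1}(i) n_{k+1} - \lambda_k(i) n_k \geq N$, which is exactly (ii)(ii).

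For $(b) \Rightarrow (a)$: I would verify (c) of Theorem~\ref{thm:carac} on each $\Lambda_M$. Given $\veps > 0$, $\tau > 0$ and a compact $K \subset \Lambda_M$, apply condition (ii) here with the same $\tau$ and a large integer $N'$, to be fixed below. Telescoping the consecutive bound yields $n_j \lambda_j(i) - n_k \lambda_k(i) \geq (j-k)\, N'$ for all $j > k$, while $n_k \geq k N'$ and $\lambda_k(i) \geq 1/M$ give $n_k \lambda_k(i) \geq k N' / M$. Both sums $\sum_k e^{-n_k\lambda_k(i)}$ and $\sum_{j>k} e^{n_k\lambda_k(i)-n_j\lambda_j(i)}$ are then dominated by geometric tails in $e^{-N'/M}$ and $e^{-N'}$ respectively, which fall below $\veps$ once $N'$ is chosen large enough, verifying (c)(ii)--(iii). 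I expect the only genuinely subtle point to be this passage from unbounded $\Lambda$ to bounded slices $\Lambda_M$; once handled, the rest is just reading off the exponential form of the weights.
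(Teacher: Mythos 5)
Your proposal is correct and follows essentially the same route as the paper: specialize Theorem \ref{thm:carac} with $F(n)=n$, read off the products as $e^{n_k\lambda_k(i)}$, get (ii)(ii) from (c)(iii) with $l=0$ by taking logarithms, and recover (c)(ii)--(iii) from (ii)(ii) by telescoping and geometric tails with a large $N'$. The only difference is that you make the reduction to bounded parameter boxes explicit via the $\Lambda_M=\Lambda\cap[1/M,M]^d$ decomposition and a Baire intersection, which the paper compresses into a one-line ``without loss of generality''.
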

\begin{proof}
Without loss of generality, we can assume that $\Lambda\subset I^d$ for some bounded interval $I\subset (0,\infty)$. That $(a)\implies (b)$ then follows directly from Theorem \ref{thm:carac}. Indeed, let us apply it for $K\subset\Lambda$ compact, $N\geq 1$ and $\veps=e^{-N}$ to get $(n_k)_{k=1,\dots,q}$ and $(\lambda_k)_{k=1,\dots,q}$. We need only to verify (ii). From (c) (iii) of Theorem \ref{thm:carac} with $l=0$, we know that, for $k=1,\dots,q-1$, for $i=1,\dots,d$,
\[\frac{\exp(n_k \lambda_k(i))}{\exp(n_{k+1}\lambda_{k+1}(i))}=\frac{w_{n_{k+1}-n_k+1}(\lambda_k(i))\cdots w_{n_{k+1}}(\lambda_k(i))}{w_1(\lambda_{k+1}(i))\cdots
w_{n_{k+1}}(\lambda_{k+1}(i))}\leq\veps=e^{-N}\]
and we conclude by taking the logarithm. The converse direction is slightly more difficult. We fix $K\subset\Lambda$ compact, $\tau>0$, $N\geq 1$ and $\veps>0$ and we apply (b) for $K$, $\tau$ and $N_0\geq N$ whose value will be precised later. Let $a>0$ be such that $\lambda(i)\geq a$ for all $\lambda\in K$ and all $i=1,\dots,d$. 
Then, for all $i=1,\dots,d$, 
\begin{align*}
\left\|\sum_{k=1}^q \frac{1}{w_1(\lambda_k(i))\cdots w_{n_k}(\lambda_k(i))}e_{n_k}\right\|&
\leq \sum_{k=1}^q \frac{1}{\exp(an_k)}\\
&\leq \sum_{j=N_0}^{+\infty}\frac1{\exp(aj)}<\veps
\end{align*}
provided $N_0$ is large enough. Regarding (iii), for all $k=1,\dots,q-1$, all $i=1,\dots,d$ and all $l=0,\dots, N$, 
\begin{align*}
&\left\|\sum_{j=k+1}^{q} \frac{w_{n_j-n_k+l+1}(\lambda_k(i))\cdots w_{n_j+l}(\lambda_k(i))}{w_{l+1}(\lambda_j(i))\cdots
w_{n_j+l}(\lambda_j(i))}e_{n_j-n_k+l}\right\|
\\
&\quad\quad\leq \sum_{j=k+1}^{+\infty}
\exp\big(-\big((\lambda_j(i)n_j-\lambda_k(i)n_k)\big)\big).
\end{align*}
Now,
\begin{align*}
\lambda_j(i)n_j-\lambda_k(i)n_k&=\sum_{s=k}^{j-1}\big(\lambda_{s+1}(i)n_{s+1}-\lambda_s(i)n_s\big)\\
&\geq (j-k)N_0.
\end{align*}
Again, provided $N_0$ is large enough, we get that condition (c) (iii) of Theorem \ref{thm:carac} is satisfied.
\end{proof}

\begin{remark}\label{rem:caracwalpha}
If we work with the family of weight $(w_n(a))_{a>0}$, with $w_1(a)\cdots w_n(a)=\exp(an^\alpha)$, $\alpha\in(0,1)$, we still have a necessary condition for common hypercyclicity if we replace (b) (ii) by: for all $k=1,\dots,q-1$, for all $j=k+1,\dots,q$, for all $i=1,\dots,d$, 
$$(\lambda_j(i)-\lambda_k(i))n_j^\alpha+\lambda_k(i)(n_{j}-n_k)^\alpha>N$$
and this condition is even sufficient on $c_0$.  The lack of linearity when $\alpha\neq 1$ prevents us to go further.
\end{remark}

We are now ready for the proof of Theorem \ref{thm:lipschitz}.
\begin{proof}[Proof of Theorem \ref{thm:lipschitz}]
We may assume that $\Lambda=f([0,1])$ with $f$ satisfying 
$$\exists C>0,\ \forall (s,t)\in [0,1]^2,\ \|f(s)-f(t)\|\leq C|s-t|.$$
Let $\tau>0$, $N\geq 1$ and let $M>0$ be very large (at least $M\geq N$). We set $n_k=kM$ for $k\geq 1$. 
We also define the sequence $(t_k)_{k\geq 1}$ by $t_1=0$ and $t_{k+1}=t_k+\frac{\tau}{C n_k}$. Let $q\geq 1$ be the greatest integer such that $t_q\leq 1$ and define, for $k=1,\dots,q-1$, $I_k=[t_k,t_{k+1}]$
and $I_q=[t_q,1]$. For all $k=1,\dots,q$, we set $\Lambda_k=f(I_k)$ and for $i=1,\dots,d$, we define 
$\lambda_k(i)$ as the maximum of the $i$-th coordinate of the elements of $\Lambda_k$. The Lipschitz condition on $f$ implies that 
$$\forall k=1,\dots,q,\ \Lambda_k\subset\prod_{i=1}^d \left[\lambda_k(i)-\frac{\tau}{n_k},\lambda_k(i)\right]$$
$$\forall k=1,\dots,q-1,\ \forall i=1,\dots,d,\ |\lambda_{k+1}(i)-\lambda_k(i)|\leq C|t_{k+2}-t_k|\leq \frac{2\tau}{kM}.$$
Therefore,
\begin{align*}
\lambda_{k+1}(i)n_{k+1}-\lambda_k(i)n_k
&\geq \lambda_k(i)n_{k+1}-\frac{2\tau}{kM}n_{k+1}-\lambda_k(i)n_k\\
&\geq \lambda_k(i)M -\frac{2\tau}{kM}\times (k+1)M\\
&\geq \lambda_k(i)M-4\tau.
\end{align*}
Provided $M$ is large enough, we can ensure that (c)(ii) of Theorem \ref{thm:caracstandard} is satisfied.
\end{proof}

%%%%%%%%%%%%%%%%%%%%%%%%%%%%%%%%%%%
%%%%%% SIZE PARAMETER SET %%%%%%%%%%%%%%%%%%
%%%%%%%%%%%%%%%%%%%%%%%%%%%%%%%%%%%

\section{On the size of the parameter sets for common hypercyclicity}

We will prove a slightly more precise result than Theorem \ref{thm:hausdorffintro}.
 Let us recall the relevant definitions (we refer to 
\cite{Falc} and \cite{Mat95} for more on this subject). 
If $\phi:\mathbb (0,+\infty)\to(0,+\infty)$ is a nondecreasing continuous function
satisfying $\lim_{0^+}\phi=0$ ($\phi$ is called a \emph{dimension function} or a 
\emph{gauge function}),
the \emph{$\phi$-Hausdorff outer measure} of a set $E\subset \mathbb R^d$ is 
$$\mathcal H^{\phi}(E)=\lim_{\veps\to 0}\inf_{r\in R_\veps(E)}\sum_{B\in r}\phi(\textrm{diam}(B)),$$
where $R_\veps(E)$ is the set of (countable) coverings of $E$ with balls $B$ of diameter 
$\textrm{diam}(B)\leq\veps$. 
When $\phi(x)=\phi_s(x)=x^s$, we write for short $\mathcal H^s$ instead of $\mathcal H^{\phi_s}$. 
The \emph{Hausdorff dimension} of a set $E$
is defined by
$$\dim_{\mathcal H}(E):=\sup\{s>0: \mathcal H^s (E)>0\}=\inf\{s>0: \mathcal H^s(E)=0\}.$$

\begin{theorem}\label{thm:hausdorff}
Let $\Lambda\subset\RR^d$, let $(T_\lambda)_{\lambda\in\Lambda}$ be a family of operators acting on the Banach space $X$. Assume that there exist a function $\psi:\NN\to(0,+\infty)$, $v\in X$, $\delta>0$ such that,
for all $\lambda,\mu\in\Lambda$, for all $n\in\NN$ and all $u\in X$ satisfying
$$\|T^n_\lambda u-v\|<\delta\quad\textrm{and}\quad\|T^n_\mu u-v\|<\delta,$$
one has 
$$\|T_\lambda^n u -T_\mu^n u\|\geq \psi(n) \|\lambda-\mu\|.$$
If $\bigcap_{\lambda\in\Lambda}HC(T_\lambda)\neq\varnothing$, then $\mathcal H^\phi(\Lambda)=0$ for any gauge function $\phi$ such that $\sum_n \phi\left(\frac  {2\delta}{\psi(n)}\right)<+\infty$.
\end{theorem}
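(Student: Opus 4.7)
The plan is to extract from any common hypercyclic vector a natural countable covering of $\Lambda$ and to bound the diameters of the covering pieces using the bi-Lipschitz-type hypothesis.

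First I would fix $u_0 \in \bigcap_{\lambda \in \Lambda} HC(T_\lambda)$ and, for each $n \in \NN$, consider
\[ A_n := \{\lambda \in \Lambda : \|T_\lambda^n u_0 - v\| < \delta\}. \]
Since the orbit $\{T_\lambda^n u_0 : n \geq 0\}$ is dense for every $\lambda \in \Lambda$, it visits the ball $B(v,\delta)$ for infinitely many $n$; hence each $\lambda$ belongs to $A_n$ for infinitely many indices, and in particular $\Lambda \subset \bigcup_{n \geq N} A_n$ for every $N \geq 1$.

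Next I would use the hypothesis to estimate $\diam(A_n)$: for $\lambda, \mu \in A_n$, the triangle inequality gives
\[ \psi(n)\|\lambda - \mu\| \leq \|T_\lambda^n u_0 - T_\mu^n u_0\| \leq \|T_\lambda^n u_0 - v\| + \|T_\mu^n u_0 - v\| < 2\delta, \]
so $\diam(A_n) \leq 2\delta/\psi(n)$. The summability of $\phi(2\delta/\psi(n))$, together with the fact that $\phi$ vanishes only at $0$, forces $\psi(n) \to +\infty$, and therefore $\diam(A_n) \to 0$.

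Finally, since $\RR^d$ is endowed with the sup-norm, any subset of diameter $\rho$ fits inside an axis-parallel closed cube (that is, a sup-norm ball) of the same diameter $\rho$, obtained by centering on the axis-parallel bounding box and padding each coordinate range to length $\rho$. I would replace each $A_n$ by such a ball $B_n$ to get a cover of $\Lambda$ with $\diam(B_n) \leq 2\delta/\psi(n)$. For $\veps > 0$, choosing $N_\veps$ so that $2\delta/\psi(n) \leq \veps$ for every $n \geq N_\veps$, one finds that $\{B_n\}_{n \geq N_\veps}$ is an $\veps$-cover of $\Lambda$ whose total $\phi$-weight is at most $\sum_{n \geq N_\veps} \phi(2\delta/\psi(n))$, which tends to $0$ with $\veps$ by the tail hypothesis. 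Hence $\mathcal H^\phi(\Lambda) = 0$. The substantive step is the first one (converting orbit return times into small-diameter pieces via the bi-Lipschitz lower bound); beyond that, there is no real obstacle, only routine bookkeeping around the definition of the $\phi$-Hausdorff measure.
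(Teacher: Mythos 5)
Your proposal is correct and follows essentially the same route as the paper: the sets $A_n$ are exactly the paper's $\Lambda_n$, the diameter bound $2\delta/\psi(n)$ comes from the same triangle-inequality argument, and the conclusion uses the tail covers $\bigcup_{n\geq N}\Lambda_n$ in the same way. The only addition is the (routine) remark about enclosing each $A_n$ in a sup-norm ball of the same diameter, which the paper leaves implicit.
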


\begin{proof}
Let $\psi, v, \delta$ be given by the assumptions and let $u\in\bigcap_{\lambda\in\Lambda}HC(T_\lambda)$. 
Define $\Lambda_n=\left\{\lambda\in\Lambda: \|T_\lambda^n u-v\|<\delta \right\}.$
Then $\textrm{diam}(\Lambda_n)\leq 2\delta/\psi(n)$. Indeed, pick $\lambda,\mu\in\Lambda_n$ and observe that
\begin{align*}
\psi(n)\|\lambda-\mu\|&\leq \|T_\lambda^n u-T_\mu^n u\|\\
&\leq \|T_\lambda^n u-v\|+\|T_\mu^n u-v\|\\
&\leq 2\delta.
\end{align*}
In particular, since the condition $\sum_n \phi\left(\frac  {2\delta}{\psi(n)}\right)<+\infty$ implies that $\psi(n)\to+\infty$,
the theorem follows from the fact that, for any $N\geq 1$, $\Lambda\subset\bigcup_{n\geq N}\Lambda_n$.
\end{proof}

Theorem~\ref{thm:hausdorffintro} follows directly from the above theorem by considering $\psi(n)=C n^{\alpha}$ and $\phi(x)=x^s$ for any $s>1/\alpha$. Moreover, we can easily apply this to families of weighted shifts.

\begin{corollary}\label{cor:dimension}
Let $X=\ell_p(\NN)$, $p\in[1,+\infty[$ or $X=c_0(\NN)$.
 Let $I\subset\RR$, let $(w(a))_{a\in I}$ be a family of weights. Assume that there exist $C,\alpha>0$ such that, for any $n\geq 1$, for 
 any $a,b\in I$, 
 $$\left|\sum_{j=1}^n \log w_j(a)-\sum_{j=1}^n\log w_j(b)\right| \geq C n^{\alpha} |a-b|.$$
 Then, for any $\Lambda\subset I^d$, 
 $$ \bigcap_{\lambda\in\Lambda} HC(B_{w(\lambda(1))}\times\cdots\times B_{w(\lambda(d))})\neq\varnothing\implies \dimh(\Lambda)\leq \frac 1\alpha.$$
\end{corollary}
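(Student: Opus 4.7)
The plan is to deduce Corollary~\ref{cor:dimension} from Theorem~\ref{thm:hausdorffintro} applied to the product operators $T_\lambda:=B_{w(\lambda(1))}\times\cdots\times B_{w(\lambda(d))}$ acting on the Banach space $X^d$. I would take $v=(e_0,\dots,e_0)$ and a small $\delta>0$ (to be fixed by the computation below), and set out to verify that $\|T_\lambda^n u-v\|<\delta$ and $\|T_\mu^n u-v\|<\delta$ imply $\|T_\lambda^n u-T_\mu^n u\|\ge C'n^\alpha\|\lambda-\mu\|$ for a suitable $C'>0$.

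Because the $X^d$-norm dominates the $X$-norm of each factor and $\|\lambda-\mu\|=\max_i|\lambda(i)-\mu(i)|$, everything reduces to the one-coordinate statement: for $x\in X$ satisfying $\|B_{w(a)}^nx-e_0\|_X<\delta$ and $\|B_{w(b)}^nx-e_0\|_X<\delta$, show $\|B_{w(a)}^nx-B_{w(b)}^nx\|_X\ge C'n^\alpha|a-b|$. Setting $P_n(a)=w_1(a)\cdots w_n(a)$, the zeroth coordinate of $B_{w(a)}^nx$ is $P_n(a)x_n$, which by hypothesis lies within $\delta$ of $1$; in particular $x_n\neq 0$, and writing $s=\log(P_n(b)/P_n(a))=\sum_{j=1}^n\log w_j(b)-\sum_{j=1}^n\log w_j(a)$, the two closeness assumptions force $|s|$ to be small while the quantitative hypothesis of the corollary gives $|s|\ge Cn^\alpha|a-b|$.

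The decisive step is then the identity $P_n(b)x_n-P_n(a)x_n=P_n(a)x_n(e^s-1)$, whose modulus is at least $(1-\delta)|e^s-1|$. Choosing $\delta$ small enough to guarantee $|s|\le 1/2$ delivers the linearisation $|e^s-1|\ge|s|/2$, and the desired lower bound follows with $C'=(1-\delta)C/2$. Since the modulus of the zeroth coordinate of a vector is dominated by its $X$-norm, this transfers to the full norm, then to $X^d$, and Theorem~\ref{thm:hausdorffintro} yields the conclusion. The only non-routine ingredient is this linearisation: one needs both orbit points to be trapped near the same nonzero scalar so that $s$ itself is confined to a small neighbourhood of $0$, where the lower hypothesis on $|s|$ transfers cleanly to $|e^s-1|$; everything else is bookkeeping through the product structure.
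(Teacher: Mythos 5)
Your proposal is correct and follows essentially the same route as the paper's proof: apply Theorem~\ref{thm:hausdorffintro} with $v=(e_0,\dots,e_0)$, extract the zeroth coordinate of each factor to get $|P_n(\cdot)u_n(k)-1|<\delta$, and convert the difference of weight products into $P_n(a)u_n(k)(e^s-1)$ with $|s|\geq Cn^\alpha|a-b|$ from the hypothesis. The only immaterial difference is in the linearisation: the paper orders the parameters so that $s\geq 0$ and uses $e^s-1\geq s$ with $\delta=1/2$, whereas you shrink $\delta$ to confine $s$ to $[-1/2,1/2]$ and use $|e^s-1|\geq |s|/2$; both are valid.
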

\begin{proof}
 Let $v=(e_0,\cdots,e_0)$. Let $\lambda,\mu\in\Lambda$, $n\in \NN$, $u\in X\times \cdots\times X$ be such that 
 $$\left\|\left(B_{w(\lambda(1))}\times \cdots \times B_{w(\lambda(d))}\right)^nu-v\right\|<\frac 12\textrm{ and }
 \left\|\left(B_{w(\mu(1))}\times \cdots \times B_{w(\mu(d))}\right)^nu-v\right\|<\frac 12.$$
 Let $1\le k\le d$. Looking at the $k$-th coordinate, we get 
 $$ \left|w_1(\lambda(k))\dots w_n(\lambda(k))u_n(k)-1\right|<1/2\textrm{ and }\left|w_1(\mu(k))\dots w_n(\mu(k))u_n(k)-1\right|<1/2.$$
 Now, setting $\veps_n=\left\|\left(B_{w(\lambda(1))}\times \cdots \times B_{w(\lambda(d))}\right)^nu - \left(B_{w(\mu(1))}\times \cdots \times B_{w(\mu(d))}\right)^nu\right\|$, we get
 $$\veps_n \geq 
 |w_1(\lambda(k))\cdots w_n(\lambda(k))-w_1(\mu(k))\cdots w_n(\mu(k))|\cdot |u_n(k)|.$$
 Assume for instance that $w_1(\mu(k))\cdots w_n(\mu(k))\geq w_1(\lambda(k))\cdots w_n(\lambda(k))$. Then 
 \begin{align*}
  \veps_n
  & \geq \left|\frac{w_1(\mu(k))\cdots w_n(\mu(k))}{ w_1(\lambda(k))\cdots w_n(\lambda(k))}-1\right| \cdot w_1(\lambda(k))\cdots w_n(\lambda(k))  \cdot |u_n(k)|\\
  &\geq \frac 12 \left|\frac{w_1(\mu(k))\cdots w_n(\mu(k))}{ w_1(\lambda(k))\cdots w_n(\lambda(k))}-1\right|.
 \end{align*}
Now,
\begin{align*}
  \left|\frac{w_1(\mu(k))\cdots w_n(\mu(k))}{ w_1(\lambda(k))\cdots w_n(\lambda(k))}-1\right|
  &= \exp\left(\sum_{j=1}^n \log w_j(\mu(k))-\sum_{j=1}^n \log w_j(\lambda(k))\right)-1\\
  &\geq \exp\left(Cn^\alpha |\lambda(k)-\mu(k)|\right)-1\\
  &\geq C n^\alpha |\lambda(k)-\mu(k)|.
\end{align*}
We conclude by applying Theorem~\ref{thm:hausdorffintro}
\end{proof}

In the case of the multiples of the backward shift, we get the following result:

\begin{corollary}\label{cor:rolewicz}
 Let $d\geq 1$, $\Lambda\subset(0,+\infty)^d$ and $X=\ell_p(\NN)$, $p\in[1,+\infty)$ or $X=c_0(\NN)$. If $\bigcap_{\lambda\in\Lambda}HC(e^{\lambda(1)}B\times\cdots\times e^{\lambda(d)} B)\neq\varnothing$ then $\dimh(\Lambda)\leq 1$.
\end{corollary}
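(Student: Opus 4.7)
The plan is to observe that this corollary is essentially a direct specialization of Corollary~\ref{cor:dimension} to the constant weight sequence $w_n(a) \equiv e^a$, which corresponds precisely to the operator $e^a B$. So the proof is very short, and the main task is just to verify the hypothesis of Corollary~\ref{cor:dimension} with the correct value of $\alpha$.

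First I would take $I = (0,+\infty)$ and the family of weights $w(a) = (w_n(a))_{n\geq 1}$ defined by $w_n(a) = e^a$ for every $n\geq 1$. Then the operator $B_{w(a)}$ is exactly $e^a B$, so a product $B_{w(\lambda(1))}\times\cdots\times B_{w(\lambda(d))}$ coincides with $e^{\lambda(1)} B\times\cdots\times e^{\lambda(d)} B$. Next I would compute, for $a,b\in (0,+\infty)$ and $n\geq 1$,
\[
\left|\sum_{j=1}^n \log w_j(a) - \sum_{j=1}^n \log w_j(b)\right| = |na - nb| = n\,|a-b|,
\]
so the hypothesis of Corollary~\ref{cor:dimension} is satisfied with $C=1$ and $\alpha=1$.

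Finally, given any $\Lambda\subset(0,+\infty)^d$ with $\bigcap_{\lambda\in\Lambda} HC(e^{\lambda(1)} B\times\cdots\times e^{\lambda(d)} B)\neq\varnothing$, Corollary~\ref{cor:dimension} applied with $\alpha=1$ directly yields $\dimh(\Lambda)\leq 1/\alpha = 1$, which is exactly the desired conclusion. There is no real obstacle here since every nontrivial step has already been carried out in the proofs of Theorem~\ref{thm:hausdorff} and Corollary~\ref{cor:dimension}; the only thing to do is to recognize that multiples of $B$ correspond to the constant-in-$n$ weight family $w_n(a) = e^a$ and to plug $\alpha = 1$ into the Hausdorff dimension bound.
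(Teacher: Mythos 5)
Your proposal is correct and matches the paper's intent exactly: the paper presents Corollary~\ref{cor:rolewicz} as an immediate specialization of Corollary~\ref{cor:dimension} to multiples of the backward shift, i.e.\ to the weight family $w_n(a)=e^a$, for which $\bigl|\sum_{j=1}^n\log w_j(a)-\sum_{j=1}^n\log w_j(b)\bigr|=n|a-b|$ gives $\alpha=1$ and hence $\dimh(\Lambda)\leq 1$. Nothing is missing.
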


Fixing $\alpha\in(0,1]$, we can also apply Corollary~\ref{cor:dimension} to the more general case of weights defined by $w_1(a)\cdots w_n(a)=\exp(a n^\alpha)$ or by  $w_n(a)=1+\frac{a}{n^{1-\alpha}}$ for all $n\geq 1$ in order to get Corollary~\ref{cor:sizenalpha}. We point out the following example which will be useful later.

\begin{example}\label{cor:expo}
Let $\alpha\in(0,1]$, and $X=\ell_p(\NN)$, $p\in[1,+\infty)$ or $X=c_0(\NN)$. Let $(w(a))_{a>0}$ be the family of weights defined  by $w_1(a)\cdots w_n(a)=\exp(a n^\alpha)$ (resp. by $w_n(a)=1+\frac{a}{n^{1-\alpha}}$) for all $n\geq 1$. If $\bigcap_{\lambda\in(0,+\infty)^d}HC(B_{w(\lambda(1))}\times\cdots\times B_{w(\lambda(d))})\neq\varnothing$, then $\alpha\leq 1/d$.
\end{example}
%\begin{proof}
%If $\bigcap_{\lambda\in(0,+\infty)^d}HC(B_{w(\lambda(1))}\times\cdots\times B_{w(\lambda(d))})\neq \varnothing$, by writing $\Lambda := (0,+\infty)^d$, it follows from Corollary \ref{cor:dimension} that $d=\dim_\H(\Lambda)\leq \frac{1}{\alpha}.$
%\end{proof}

%%%%%%%%%%%%%%%%%%%%%
%%%% COMMON IN SEVERAL DIMENSIONS
%%%%%%%%%%%%%%%%%%%%%

\section{A common hypercyclicity criterion in several dimensions}

%%%%%%%%%% EXPOSITORY %%%%%%%%%%%%

\subsection{Why this statement? Why this proof?}

This section is purely expository. We intend to explain the assumptions of Theorem \ref{thm:multiprecised} and to talk a few words to introduce its proof. Let $\alpha\in(0,1/2)$ and let $w$ be the weight defined by $w_1(a)\cdots w_n(a)=\exp(an^\alpha)$. In view of Corollary~\ref{cor:expo}, a plausible statement is that $(B_{w(a)}\times B_{w(b)})_{(a,b)\in[1,2]^2}$ admits a common
hypercyclic vector.
To prove it and apply either the Basic Criterion or Theorem~\ref{thm:carac}, we need a covering of $\Lambda=[1,2]^2$.
A natural covering is given by the set $\Gamma_m$ of the closed dyadic cubes of width $2^{-m}$. We have to order these cubes, $\Gamma_m=(\Lambda_k)_{k=1,\dots,4^m}$, to fix $\lambda_k=(a_k,b_k)\in\Lambda_k$ and to associate an increasing sequence $(n_k)_{k=1,\dots,4^m}$ to this covering. Because we are working on $\RR^2$, it is not clear how we have to order the dyadic cubes. Figure \ref{fig:covering} shows three natural candidates.

\begin{center}
 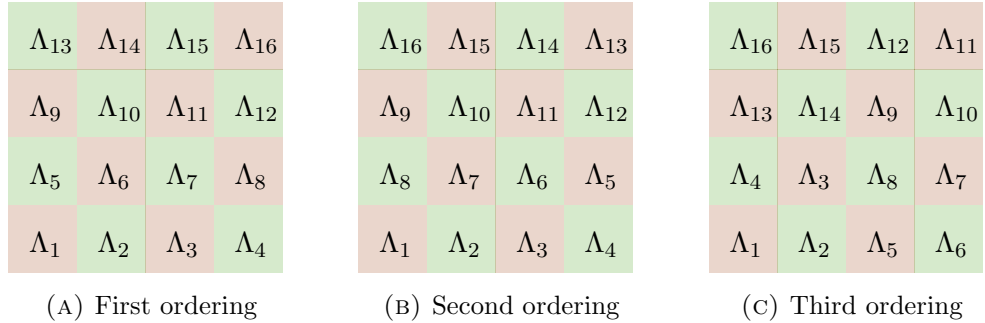
\begin{figure}[h!]\caption{How to order the dyadic covering ($m=2$)} \label{fig:covering}
  \begin{subfigure}[b]{0.25\textwidth}
 \begin{tikzpicture}[line cap=round,line join=round,>=triangle 45,x=0.9cm,y=0.9cm]
\clip(0,-2) rectangle (4,2.5);
\fill[line width=2.pt,color=zzttqq,fill=zzttqq,fill opacity=0.2] (0.,-1.) -- (0.,-2.) -- (1.,-2.) -- (1.,-1.) -- cycle;
\draw (0.15,-1.25) node[anchor=north west,color=black] {$\Lambda_1$};
\fill[line width=2.pt,color=zzttqqa,fill=zzttqqa,fill opacity=0.2] (1.,-1.) -- (1.,-2.) -- (2.,-2.) -- (2.,-1.) -- cycle;
\draw (1.15,-1.25) node[anchor=north west,color=black] {$\Lambda_2$};
\fill[line width=2.pt,color=zzttqq,fill=zzttqq,fill opacity=0.2] (2.,-1.) -- (2.,-2.) -- (3.,-2.) -- (3.,-1.) -- cycle;
\draw (2.15,-1.25) node[anchor=north west,color=black] {$\Lambda_3$};
\fill[line width=2.pt,color=zzttqqa,fill=zzttqqa,fill opacity=0.2] (3.,-1.) -- (3.,-2.) -- (4.,-2.) -- (4.,-1.) -- cycle;
\draw (3.15,-1.25) node[anchor=north west,color=black] {$\Lambda_4$};
\fill[line width=2.pt,color=zzttqq,fill=zzttqqa,fill opacity=0.2] (0.,0.) -- (0.,-1.) -- (1.,-1.) -- (1.,0.) -- cycle;
\draw (0.15,-0.25) node[anchor=north west,color=black] {$\Lambda_5$};
\fill[line width=2.pt,color=zzttqqa,fill=zzttqq,fill opacity=0.2] (1.,0.) -- (1.,-1.) -- (2.,-1.) -- (2.,0.) -- cycle;
\draw (1.15,-0.25) node[anchor=north west,color=black] {$\Lambda_6$};
\fill[line width=2.pt,color=zzttqq,fill=zzttqqa,fill opacity=0.2] (2.,0.) -- (2.,-1.) -- (3.,-1.) -- (3.,0.) -- cycle;
\draw (2.15,-0.25) node[anchor=north west,color=black] {$\Lambda_7$};
\fill[line width=2.pt,color=zzttqqa,fill=zzttqq,fill opacity=0.2] (3.,0.) -- (3.,-1.) -- (4.,-1.) -- (4.,0.) -- cycle;
\draw (3.15,-0.25) node[anchor=north west,color=black] {$\Lambda_8$};
\fill[line width=2.pt,color=zzttqq,fill=zzttqq,fill opacity=0.2] (0.,1.) -- (0.,0.) -- (1.,0.) -- (1.,1.) -- cycle;
\draw (0.15,0.75) node[anchor=north west,color=black] {$\Lambda_9$};
\fill[line width=2.pt,color=zzttqqa,fill=zzttqqa,fill opacity=0.2] (1.,1.) -- (1.,0.) -- (2.,0.) -- (2.,1.) -- cycle;
\draw (1.15,0.75) node[anchor=north west,color=black] {$\Lambda_{10}$};
\fill[line width=2.pt,color=zzttqq,fill=zzttqq,fill opacity=0.2] (2.,1.) -- (2.,0.) -- (3.,0.) -- (3.,1.) -- cycle;
\draw (2.15,0.75) node[anchor=north west,color=black] {$\Lambda_{11}$};
\fill[line width=2.pt,color=zzttqqa,fill=zzttqqa,fill opacity=0.2] (3.,1.) -- (3.,0.) -- (4.,0.) -- (4.,1.) -- cycle;
\draw (3.15,0.75) node[anchor=north west,color=black] {$\Lambda_{12}$};
\fill[line width=2.pt,color=zzttqq,fill=zzttqqa,fill opacity=0.2] (0.,2.) -- (0.,1.) -- (1.,1.) -- (1.,2.) -- cycle;
\draw (0.15,1.75) node[anchor=north west,color=black] {$\Lambda_{13}$};
\fill[line width=2.pt,color=zzttqqa,fill=zzttqq,fill opacity=0.2] (1.,2.) -- (1.,1.) -- (2.,1.) -- (2.,2.) -- cycle;
\draw (1.15,1.75) node[anchor=north west,color=black] {$\Lambda_{14}$};
\fill[line width=2.pt,color=zzttqq,fill=zzttqqa,fill opacity=0.2] (2.,2.) -- (2.,1.) -- (3.,1.) -- (3.,2.) -- cycle;
\draw (2.15,1.75) node[anchor=north west,color=black] {$\Lambda_{15}$};
\fill[line width=2.pt,color=zzttqqa,fill=zzttqq,fill opacity=0.2] (3.,2.) -- (3.,1.) -- (4.,1.) -- (4.,2.) -- cycle;
\draw (3.15,1.75) node[anchor=north west,color=black] {$\Lambda_{16}$};
\end{tikzpicture}
\subcaption{First ordering}
\end{subfigure}
\hspace*{0.6cm}
  \begin{subfigure}[b]{0.25\textwidth}
 \begin{tikzpicture}[line cap=round,line join=round,>=triangle 45,x=0.9cm,y=0.9cm]
\clip(0,-2) rectangle (4,2.5);
\fill[line width=2.pt,color=zzttqq,fill=zzttqq,fill opacity=0.2] (0.,-1.) -- (0.,-2.) -- (1.,-2.) -- (1.,-1.) -- cycle;
\draw (0.15,-1.25) node[anchor=north west,color=black] {$\Lambda_1$};
\fill[line width=2.pt,color=zzttqqa,fill=zzttqqa,fill opacity=0.2] (1.,-1.) -- (1.,-2.) -- (2.,-2.) -- (2.,-1.) -- cycle;
\draw (1.15,-1.25) node[anchor=north west,color=black] {$\Lambda_2$};
\fill[line width=2.pt,color=zzttqq,fill=zzttqq,fill opacity=0.2] (2.,-1.) -- (2.,-2.) -- (3.,-2.) -- (3.,-1.) -- cycle;
\draw (2.15,-1.25) node[anchor=north west,color=black] {$\Lambda_3$};
\fill[line width=2.pt,color=zzttqqa,fill=zzttqqa,fill opacity=0.2] (3.,-1.) -- (3.,-2.) -- (4.,-2.) -- (4.,-1.) -- cycle;
\draw (3.15,-1.25) node[anchor=north west,color=black] {$\Lambda_4$};
\fill[line width=2.pt,color=zzttqq,fill=zzttqqa,fill opacity=0.2] (0.,0.) -- (0.,-1.) -- (1.,-1.) -- (1.,0.) -- cycle;
\draw (0.15,-0.25) node[anchor=north west,color=black] {$\Lambda_8$};
\fill[line width=2.pt,color=zzttqqa,fill=zzttqq,fill opacity=0.2] (1.,0.) -- (1.,-1.) -- (2.,-1.) -- (2.,0.) -- cycle;
\draw (1.15,-0.25) node[anchor=north west,color=black] {$\Lambda_7$};
\fill[line width=2.pt,color=zzttqq,fill=zzttqqa,fill opacity=0.2] (2.,0.) -- (2.,-1.) -- (3.,-1.) -- (3.,0.) -- cycle;
\draw (2.15,-0.25) node[anchor=north west,color=black] {$\Lambda_6$};
\fill[line width=2.pt,color=zzttqqa,fill=zzttqq,fill opacity=0.2] (3.,0.) -- (3.,-1.) -- (4.,-1.) -- (4.,0.) -- cycle;
\draw (3.15,-0.25) node[anchor=north west,color=black] {$\Lambda_5$};
\fill[line width=2.pt,color=zzttqq,fill=zzttqq,fill opacity=0.2] (0.,1.) -- (0.,0.) -- (1.,0.) -- (1.,1.) -- cycle;
\draw (0.15,0.75) node[anchor=north west,color=black] {$\Lambda_9$};
\fill[line width=2.pt,color=zzttqqa,fill=zzttqqa,fill opacity=0.2] (1.,1.) -- (1.,0.) -- (2.,0.) -- (2.,1.) -- cycle;
\draw (1.15,0.75) node[anchor=north west,color=black] {$\Lambda_{10}$};
\fill[line width=2.pt,color=zzttqq,fill=zzttqq,fill opacity=0.2] (2.,1.) -- (2.,0.) -- (3.,0.) -- (3.,1.) -- cycle;
\draw (2.15,0.75) node[anchor=north west,color=black] {$\Lambda_{11}$};
\fill[line width=2.pt,color=zzttqqa,fill=zzttqqa,fill opacity=0.2] (3.,1.) -- (3.,0.) -- (4.,0.) -- (4.,1.) -- cycle;
\draw (3.15,0.75) node[anchor=north west,color=black] {$\Lambda_{12}$};
\fill[line width=2.pt,color=zzttqq,fill=zzttqqa,fill opacity=0.2] (0.,2.) -- (0.,1.) -- (1.,1.) -- (1.,2.) -- cycle;
\draw (0.15,1.75) node[anchor=north west,color=black] {$\Lambda_{16}$};
\fill[line width=2.pt,color=zzttqqa,fill=zzttqq,fill opacity=0.2] (1.,2.) -- (1.,1.) -- (2.,1.) -- (2.,2.) -- cycle;
\draw (1.15,1.75) node[anchor=north west,color=black] {$\Lambda_{15}$};
\fill[line width=2.pt,color=zzttqq,fill=zzttqqa,fill opacity=0.2] (2.,2.) -- (2.,1.) -- (3.,1.) -- (3.,2.) -- cycle;
\draw (2.15,1.75) node[anchor=north west,color=black] {$\Lambda_{14}$};
\fill[line width=2.pt,color=zzttqqa,fill=zzttqq,fill opacity=0.2] (3.,2.) -- (3.,1.) -- (4.,1.) -- (4.,2.) -- cycle;
\draw (3.15,1.75) node[anchor=north west,color=black] {$\Lambda_{13}$};
\end{tikzpicture}
\subcaption{Second ordering}
\end{subfigure}
\hspace*{0.6cm}
  \begin{subfigure}[b]{0.25\textwidth}
 \begin{tikzpicture}[line cap=round,line join=round,>=triangle 45,x=0.9cm,y=0.9cm]
\clip(0,-2) rectangle (4,2.5);
\fill[line width=2.pt,color=zzttqq,fill=zzttqq,fill opacity=0.2] (0.,-1.) -- (0.,-2.) -- (1.,-2.) -- (1.,-1.) -- cycle;
\draw (0.15,-1.25) node[anchor=north west,color=black] {$\Lambda_1$};
\fill[line width=2.pt,color=zzttqqa,fill=zzttqqa,fill opacity=0.2] (1.,-1.) -- (1.,-2.) -- (2.,-2.) -- (2.,-1.) -- cycle;
\draw (1.15,-1.25) node[anchor=north west,color=black] {$\Lambda_2$};
\fill[line width=2.pt,color=zzttqq,fill=zzttqq,fill opacity=0.2] (2.,-1.) -- (2.,-2.) -- (3.,-2.) -- (3.,-1.) -- cycle;
\draw (2.15,-1.25) node[anchor=north west,color=black] {$\Lambda_5$};
\fill[line width=2.pt,color=zzttqqa,fill=zzttqqa,fill opacity=0.2] (3.,-1.) -- (3.,-2.) -- (4.,-2.) -- (4.,-1.) -- cycle;
\draw (3.15,-1.25) node[anchor=north west,color=black] {$\Lambda_6$};
\fill[line width=2.pt,color=zzttqq,fill=zzttqqa,fill opacity=0.2] (0.,0.) -- (0.,-1.) -- (1.,-1.) -- (1.,0.) -- cycle;
\draw (0.15,-0.25) node[anchor=north west,color=black] {$\Lambda_4$};
\fill[line width=2.pt,color=zzttqqa,fill=zzttqq,fill opacity=0.2] (1.,0.) -- (1.,-1.) -- (2.,-1.) -- (2.,0.) -- cycle;
\draw (1.15,-0.25) node[anchor=north west,color=black] {$\Lambda_3$};
\fill[line width=2.pt,color=zzttqq,fill=zzttqqa,fill opacity=0.2] (2.,0.) -- (2.,-1.) -- (3.,-1.) -- (3.,0.) -- cycle;
\draw (2.15,-0.25) node[anchor=north west,color=black] {$\Lambda_8$};
\fill[line width=2.pt,color=zzttqqa,fill=zzttqq,fill opacity=0.2] (3.,0.) -- (3.,-1.) -- (4.,-1.) -- (4.,0.) -- cycle;
\draw (3.15,-0.25) node[anchor=north west,color=black] {$\Lambda_7$};
\fill[line width=2.pt,color=zzttqq,fill=zzttqq,fill opacity=0.2] (0.,1.) -- (0.,0.) -- (1.,0.) -- (1.,1.) -- cycle;
\draw (0.15,0.75) node[anchor=north west,color=black] {$\Lambda_{13}$};
\fill[line width=2.pt,color=zzttqqa,fill=zzttqqa,fill opacity=0.2] (1.,1.) -- (1.,0.) -- (2.,0.) -- (2.,1.) -- cycle;
\draw (1.15,0.75) node[anchor=north west,color=black] {$\Lambda_{14}$};
\fill[line width=2.pt,color=zzttqq,fill=zzttqq,fill opacity=0.2] (2.,1.) -- (2.,0.) -- (3.,0.) -- (3.,1.) -- cycle;
\draw (2.15,0.75) node[anchor=north west,color=black] {$\Lambda_{9}$};
\fill[line width=2.pt,color=zzttqqa,fill=zzttqqa,fill opacity=0.2] (3.,1.) -- (3.,0.) -- (4.,0.) -- (4.,1.) -- cycle;
\draw (3.15,0.75) node[anchor=north west,color=black] {$\Lambda_{10}$};
\fill[line width=2.pt,color=zzttqq,fill=zzttqqa,fill opacity=0.2] (0.,2.) -- (0.,1.) -- (1.,1.) -- (1.,2.) -- cycle;
\draw (0.15,1.75) node[anchor=north west,color=black] {$\Lambda_{16}$};
\fill[line width=2.pt,color=zzttqqa,fill=zzttqq,fill opacity=0.2] (1.,2.) -- (1.,1.) -- (2.,1.) -- (2.,2.) -- cycle;
\draw (1.15,1.75) node[anchor=north west,color=black] {$\Lambda_{15}$};
\fill[line width=2.pt,color=zzttqq,fill=zzttqqa,fill opacity=0.2] (2.,2.) -- (2.,1.) -- (3.,1.) -- (3.,2.) -- cycle;
\draw (2.15,1.75) node[anchor=north west,color=black] {$\Lambda_{12}$};
\fill[line width=2.pt,color=zzttqqa,fill=zzttqq,fill opacity=0.2] (3.,2.) -- (3.,1.) -- (4.,1.) -- (4.,2.) -- cycle;
\draw (3.15,1.75) node[anchor=north west,color=black] {$\Lambda_{11}$};
\end{tikzpicture}
\subcaption{Third ordering}
\end{subfigure}
 \end{figure}
 \end{center}

This order and the associated sequence $(n_k)$ are very important and we know that they at least have to satisfy the following conditions:
\begin{itemize}
\item $n_{4^m}^\alpha$ cannot be greater than $2^m$, so that $\Lambda_k\subset B(\lambda_k,C/n_k^\alpha)$ for some $C>0$ and for all $k=1,\dots,4^m$ (see Theorem \ref{thm:carac} (c) (i));
\item for all $k=1,\dots,4^m-1$ and all $j=k+1,\dots,4^m$, 
$$\left\{
\begin{array}{c}
(a_j-a_k)n_j^\alpha+a_k(n_j-n_k)^\alpha>0\\
(b_j-b_k)n_j^\alpha+b_k(n_j-n_k)^\alpha>0\\
\end{array}\right.$$
(see Remark \ref{rem:caracwalpha}).
\end{itemize}
The last conditions are always satisfied if $a_j>a_k$ and $b_j>b_k$ but are relevant if there is a backward jump between $\Lambda_k$ and $\Lambda_j$, namely if either $a_j<a_k$ or $b_j<b_k$. Suppose for instance that $a_j<a_k$. In that case a small computation shows that we must have
\begin{equation}\label{eq:saut}
n_j\geq \left(\frac1{1-\frac{(a_k-a_j)^{1/\alpha}}{a_k^{1/\alpha}}}\right)n_k\geq \left(\frac1{1-\frac{(a_k-a_j)^{1/\alpha}}{2^{1/\alpha}}}\right)n_k.
\end{equation}
Let us now discuss what this implies on each of the three orderings.
\begin{enumerate}[1.]
\item for the first ordering, there are $2^{m}-1$ backward jumps of size around $1$, say at least $1/2$, between two consecutive dyadic cubes, namely we must have
$$n_{l\cdot 2^m+1}\geq \left(\frac{1}{1-\frac{1}{2^{2/\alpha}}}\right)n_{l\cdot 2^m}$$
for $l=1,\dots,2^m-1$. Hence we will have at least
$$n_{4^m}\geq 	\left(\frac{1}{1-\frac{1}{2^{2/\alpha}}}\right)^{2^{m}-1}n_1$$
which is much bigger than $2^{m/\alpha}$.
\item for the second ordering, we have $2^{m-1}$ backward jumps of size around $1$, say again at least $1/2$, now between the cubes $\Lambda_{(2l+1)\cdot 2^m+1}$ and $\Lambda_{(2l+2)\cdot 2^m}$, for $l=0,\dots,2^{m-1}-1$. Therefore we must have
$$n_{(2l+2)\cdot 2^m}\geq \left(\frac{1}{1-\frac{1}{2^{2/\alpha}}}\right)n_{(2l+1)\cdot 2^m+1},$$
$l=0,\dots,2^{m-1}-1$,
which again implies that $n_{4^m}$ will be much bigger than expected
since
\[n_{4^m}\ge \left(\frac{1}{1-\frac{1}{2^{2/\alpha}}}\right)^{2^{m-1}} n_{2^m+1}.\]
\item An important part of the proof of Theorem \ref{thm:multiprecised} will be to show that the third way to order the covering
is much more economical from this point of view (heuristically speaking, because the big backward jumps are not consecutive, see $\Lambda_{10}$, $\Lambda_{13}$, $\Lambda_{11}$ and $\Lambda_{16}$). More specifically, we will be able to exhibit an increasing sequence $(n_k)$ such that there exists some $D>0$ satisfying
\begin{equation}\label{eq:saut2}
\forall 1\leq k<j\leq 4^m,\ \forall \lambda\in\Lambda_k,\ \forall \mu\in \Lambda_j,\ \|\lambda-\mu\|
\leq D\left(\frac{n_j-n_k}{n_k}\right)^\alpha
\end{equation}
and such that $n_{4^m}^\alpha$ is smaller than $2^m$.
\end{enumerate}
Thus the third way to order the covering is very well adapted to the problem of finding a common hypercyclic vector for the family $(B_{w(a)}\times B_{w(b)})_{(a,b)\in[1,2]^2}$. %This is because we will use this ordering in the general case that the 
That is why we will use this ordering in the general case, so the assumption (a) in Theorem \ref{thm:multiprecised} becomes very natural.

%%%%%% DIMENSION %%%%%%%%%%%

\subsection{About the homogeneous box dimension}
In this subsection, we discuss the property of having homogeneous box dimension at most $\gamma$. 
We first recall the classical notion of the upper box dimension. Let $\Lambda\subset\mathbb R^d$ be compact. Its 
upper box dimension is defined by
$$\dboxsup(\Lambda)=\limsup_{\veps\to 0}\frac{\log N(\veps)}{\log(1/\veps)},$$
where $N(\veps)$ denotes the smallest number of cubes of size $\veps>0$ which are needed to cover $\Lambda$. We do not change the definition if we only allow $\veps$ to be equal to $c\rho^m$ for some $c>0$, some $\rho\in(0,1)$ and all $m\in\NN$. Namely, for all $\rho\in (0,1)$, 
$$\dboxsup(\Lambda)=\limsup_{m\to+\infty} \frac{\log N(c\rho^m)}{-m\log \rho}.$$

Let us now have a look on the property of having homogeneous box dimension at most $\gamma$. The first two conditions  could be easily rephrased by saying that $\Lambda$ has upper box dimension at most $\gamma$: setting $\rho=1/r^{1/\gamma}$, for each $m\geq 1$, you can cover $\Lambda$ by $1/\rho^{\gamma m}$ balls of radius $C(\Lambda) \rho^m$. The last condition adds the homogeneity requirement: the covering of depth $m$ should be, in a precise sense, a refinement of the covering of depth $m-1$.

It is therefore clear that 
$$\dimh(\Lambda)\leq \dboxsup(\Lambda)\leq \dboxhom(\Lambda).$$
It turns out that, in many cases, one has equality or at least we can prove that $\Lambda$ has homogeneous box dimension at most $\gamma$. Also, any compact subset of $\mathbb R^d$ has homogeneous box dimension at most $d$: we may assume that $\Lambda\subset [0,1]^d$ and we define $r=2^d$ and $\Lambda_{\mathbf k}$ as the intersections of $\Lambda$ with the dyadic subcubes of $[0,1]^d$ with width $2^{-m}$.

We can also provide positive results for compact selfsimilar sets. A compact set $\Lambda\subset\RR^d$ is called {\it selfsimilar} provided
there exists $r$ similarities $s_1,\dots,s_r$ with respective ratio $\rho_1,\dots,\rho_r\in (0,1)$  such that $\Lambda=\bigcup_{i=1}^r s_i(\Lambda)$. For $\bk\in I_r^m$, define $s_\bk=s_{k_1}\circ\cdots\circ s_{k_r}$. 
Let $\gamma$ be defined by 
$$\gamma=\max\left\{\frac{-\log r}{\log(\rho_i)}:\ i=1,\dots,r\right\}.$$
Then setting $\Lambda_\bk=s_\bk(\Lambda)$, one can show that $\Lambda$ has homogeneous box dimension at most $\gamma$. If all the ratios are equal to the same $\rho$, then $\Lambda$ has homogeneous box dimension at most $-\log r/\log \rho$, and when $\Lambda$ satisfies the open set condition (namely there exists $V\subset\RR^d$ open such that $\bigcup_{i=1}^r s_i(V)\subset V$ and $s_i(V)\cap s_j(V)\neq\varnothing$ for $i\neq j$), it is well-known that the Hausdorff dimension of $\Lambda$ equals this value. Hence, in that case
$$\dimh(\Lambda)= \dboxsup(\Lambda)=\dboxhom(\Lambda)=\frac{-\log r}{\log\rho}.$$

Another interesting example is that of H\"older curves. Assume that $\Lambda=f([0,1])$ where $f:[0,1]\to\mathbb R^d$ satisfies
$$\|f(s)-f(t)\|\leq C|s-t|^\alpha,$$
$C>0$, $\alpha\in(0,1)$. We set $r=2$ and for $\bk\in I_{r}^{m}$, we define $I_\bk$ as the dyadic interval
$$I_\bk=\left[\sum_{i=1}^m \frac{k_i-1}{2^i},\sum_{i=1}^m \frac{k_i-1}{2^i}+\frac 1{2^m}\right].$$
Then define $\Lambda_\bk=f(I_{\bk})$. We get immediately that
$$\diam(\Lambda_\bk)\leq C\diam(I_{\bk})^\alpha=C\left(\frac1{2^\alpha}\right)^m.$$
Therefore, $\Lambda$ has homogeneous box dimension at most $1/\alpha$ and there are well-known examples where the box dimension of such a curve (hence, its homogeneous box dimension) is exactly equal to $1/\alpha$.

%%%%%% EXAMPLES %%%%%%%%%%%%

\subsection{Examples}
Before proceeding with the proof of Theorem \ref{thm:multiprecised}, let us show how this theorem can be applied to a direct sum of weighted shifts. Let $X=\ell_p(\NN)$ or $c_0(\NN)$, $p\in[1,+\infty)$, $I\subset\RR$ compact and $(w(a))_{a\in I}$ be a continuous family of positive weights. We keep the notations of Section \ref{sec:ws}, namely for $\lambda\in I^d$, we denote
by $T_\lambda=B_{w(\lambda(1))}\times \cdots\times B_{w(\lambda(d))}$ and by $S_\lambda=F_{w^{-1}(\lambda(1))}\times
\cdots\times F_{w^{-1}(\lambda(d))}$. We also set $\mathcal D=c_{00}^d$ and we endow $X^d$ with 
$$\|u\|=\max(\|u(1)\|,\dots,\|u(d)\|).$$
 We first point out that (b) of Theorem \ref{thm:multiprecised} is implied by a Lipschitz inequality on $f_n$ where $f_n(a)= \sum_{k=1}^n \log(w_k(a))$ as in Theorem~\ref{thm:carac}. 
Indeed, suppose that  there exist $\alpha>0$ and $C>0$ such that, for all $a,b\in I$, 
\begin{equation}\label{eq:lipschitzalpha}
|f_n(a)-f_n(b)|=\left|\sum_{j=1}^n\big( \log w_j(a)-\log w_j(b)\big)\right|\leq Cn^\alpha |a-b|.
\end{equation}
Then observe first that \eqref{eq:lipschitzalpha} implies that, for all $L>0$, there exists $C'>0$ such that, for all $a,b\in I$, for all $l\in [0,L]$, 
$$\left|\sum_{j=l+1}^{n+l} \big(\log w_j(a)-\log w_j(b)\big)\right|\leq C'n^\alpha |a-b|.$$
Let $u=(u(1),\dots,u(d))\in\mathcal D$ and consider $L>0$ such that the support of each $u(i)$ is contained in $[0,L]$. For all $\lambda,\mu\in I^d$, 
\begin{align*}
\left\|T_\lambda^n S_\mu^n u-u\right \|&\leq \|u\| \max_{i=1,\dots, d}\ \max_{l=0,\dots,L}
\left|\frac{w_{l+1}(\lambda(i))\cdots w_{l+n}(\lambda(i))}{w_{l+1}(\mu(i))\cdots w_{l+n}(\mu(i))}-1\right|\\
&\leq \|u\| \max_{i=1,\dots,d}\ \max_{l=0,\dots,L}\left|\exp\left(\sum_{j=l+1}^{n+l}\big( \log w_j(\lambda(i))-\log w_j(\mu(i))\big)\right)-1\right|\\
&\leq \|u\|  \big(\exp(C'n^\alpha \|\lambda-\mu\|)-1\big).
\end{align*}
Clearly, for all $\veps>0$, we can choose a sufficiently small $\tau>0$ (depending on $\veps$, $\|u\|$, $C'$) for all $n\geq 1$, if $\|\lambda-\mu\|\leq\tau/n^\alpha$, then
$$\|T_\lambda^n S_\mu^n u-u\|<\veps.$$
Hence, it is enough to assume \eqref{eq:lipschitzalpha} to get (b) of Theorem \ref{thm:multiprecised}. Let us now turn to (a), under the assumption \eqref{eq:lipschitzalpha}. %What we need if that the product $w_1(a)\cdots w_n(a)$ is not too small.
What we need is the product $w_1(a)\cdots w_n(a)$ not to be too small.
\begin{lemma}
Let $\alpha\in (0,1]$ and assume that there exist $C_1,C_2$ and $C_3>0$ such that 
 \begin{itemize}
    \item $a \in I \mapsto\sum_{j=1}^n\log(w_j(a))$ is $C_1n^\alpha$-Lipschitz;
    \item $\inf_{a\in I}w_1(a)\cdots w_n(a)\geq C_2\exp(C_3n^\alpha).$
\end{itemize}
There exists $D>0$ such that, for all $u\in\mathcal D$, there exist $M>0$ and $N>0$ such that, for all $\lambda,\mu\in I^d$, for all $n\geq 0$ and $k\geq N$ such that $\|\lambda-\mu\|\leq D\frac{k^\alpha}{(n+k)^\alpha}$, then 
\[ \left\| T_\lambda^{n+k}S_\mu^n u\right\|\leq \frac M {k} \quad\textrm{and}\quad \left\| T_\lambda^{n}S_\mu^{n+k} u\right\|\leq \frac M {k}. \]
\end{lemma}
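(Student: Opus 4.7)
I will reduce both bounds to elementary estimates on the scalar factors in $T_\lambda^{n+k}S_\mu^n e_l$ and $T_\lambda^n S_\mu^{n+k} e_l$, working coordinate by coordinate. Recall that $F_{w^{-1}(a)}e_j = w_{j+1}(a)^{-1}e_{j+1}$ and $B_{w(a)}e_j = w_j(a)e_{j-1}$ (with $B_{w(a)}e_0=0$).

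For the first bound, a direct computation gives $T_\lambda^{n+k}S_\mu^n e_l = 0$ whenever $k>l$, since $T_\lambda^{n+k}$ shifts down by $n+k$ from the position $l+n$ produced by $S_\mu^n e_l$. Since $u\in\mathcal D = c_{00}^d$, its support in each coordinate is contained in $\{0,\dots,L\}$ for some $L$ depending on $u$. Choosing $N>L$, the left-hand side vanishes identically for every $k\geq N$, so the first bound is trivial.

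The substance lies in the second bound. A similar computation yields, for each $l$ in the support of $u(i)$,
\[ T_\lambda^n S_\mu^{n+k} e_l = \frac{\prod_{j=l+k+1}^{l+n+k} w_j(\lambda)}{\prod_{j=l+1}^{l+n+k} w_j(\mu)}\, e_{l+k}. \]
Setting $f_n(a)=\sum_{j=1}^n \log w_j(a)$, the logarithm of the scalar factor decomposes as
\[ \bigl[(f_{l+n+k}(\lambda)-f_{l+n+k}(\mu)) - (f_{l+k}(\lambda)-f_{l+k}(\mu))\bigr] - \bigl[f_{l+k}(\mu)-f_l(\mu)\bigr]. \]
The first bracket is controlled by the Lipschitz hypothesis: each of its two summands is bounded in absolute value by $C_1(l+n+k)^\alpha\|\lambda-\mu\|$, so (using $l\leq L\leq k$ for $k\geq N\geq L$, which gives $l+n+k\leq 2(n+k)$) the bracket is at most $2^{\alpha+1}C_1(n+k)^\alpha\|\lambda-\mu\|$, and therefore at most $2^{\alpha+1}C_1 D\, k^\alpha$ in view of $\|\lambda-\mu\|\leq D k^\alpha/(n+k)^\alpha$. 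The second bracket, by the lower-bound hypothesis, satisfies $f_{l+k}(\mu)\geq C_3(l+k)^\alpha+\log C_2 \geq C_3 k^\alpha + \log C_2$, while $f_l(\mu)$ is bounded by a constant depending only on $L$ and $I$ (by continuity of the weights on the compact set $I$).

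Combining the two estimates, the log of the scalar factor is at most $-(C_3-2^{\alpha+1}C_1 D)k^\alpha + \mathrm{const}$. The key point is to choose $D$ small enough that $c:=C_3 - 2^{\alpha+1}C_1 D > 0$; crucially, $D$ depends only on $\alpha$, $C_1$, $C_3$, and is therefore \emph{independent of $u$}, as required. The scalar factor is then bounded by $M_0 e^{-c k^\alpha}$ with $M_0$ depending on $u$. Since $\alpha>0$, we have $e^{-c k^\alpha}=o(1/k)$, so summing over the finitely many $l$ in the support of each $u(i)$ and across the $d$ coordinates and enlarging $N$ and $M$ accordingly yields $\|T_\lambda^n S_\mu^{n+k} u\|\leq M/k$ for all $k\geq N$. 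The main delicate point is the universality of $D$, which is achieved by the decomposition above because the $u$-dependent pieces appear only additively and are absorbed into $M_0$.
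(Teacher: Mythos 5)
Your proof is correct and follows essentially the same route as the paper's: the first bound is trivial for $k\geq N>L$ since the forward shift pushes the support beyond reach, and the second is obtained by splitting the scalar factor into a Lipschitz-controlled quotient (bounded by $\exp(\mathrm{const}\cdot C_1 D k^\alpha)$) and a decay factor coming from the lower bound $f_{l+k}(\mu)\geq C_3 k^\alpha+\log C_2$, then choosing $D$ small so the exponent is negative. The only cosmetic difference is that you carry the growth factor with the weight $\mu$ while the paper uses $\lambda$, and you correctly identify the one delicate quantifier issue, namely that $D$ must not depend on $u$.
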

\begin{proof}
Again we fix $L>0$ such that the support of each $u(i)$ is contained in $[0,L]$. 
 Choosing $N>L$, we will have $T_\lambda^{n+k}S_\mu^n u=0$ provided $k\ge N$. On the other hand,
\begin{align*}
\|T_\lambda^n S_\mu^{n+k}u\|&\leq \|u\| \max_{i=1,\dots,d}\ \max_{l=0,\dots, L}
\underbrace{\frac{w_{l+1}(\lambda(i))\cdots w_{l+n+k}(\lambda(i))}{w_{l+1}(\mu(i))\cdots w_{l+n+k}(\mu(i))}}_{F_1}\times\underbrace{\frac 1{w_{l+1}(\lambda(i))\cdots w_{l+k}(\lambda(i))}}_{F_2}\\
%&\leq M \max_{i=1,\dots,d}
% \underbrace{\frac{w_{1}(\lambda(i))\cdots w_{n+k}(\lambda(i))}{w_{1}(\mu(i))\cdots w_{n+k}(\mu(i))}}_{F_1}\times\underbrace{\frac{1}{w_{1}(\lambda(i))\cdots w_{k}(\lambda(i))}}_{F_2}
\end{align*}
We write
\begin{align*}
F_1&\leq \frac{w_{1}(\lambda(i))\cdots w_{l+n+k}(\lambda(i))}{w_{1}(\mu(i))\cdots w_{l+n+k}(\mu(i))}
\times \frac{w_{1}(\mu(i))\cdots w_{l}(\mu(i))}{w_{1}(\lambda(i))\cdots w_{l}(\lambda(i))}\\
&\leq \exp\big(C_1((l+n+k)^\alpha+l^\alpha)\|\lambda-\mu\|\big)\\
&\leq \exp\big(2C_1 (n+k)^\alpha\|\lambda-\mu\|\big)
\end{align*}
provided $N$, hence $k$, is large enough. If we add the assumption $\|\lambda-\mu\|\leq \frac{Dk^\alpha}{(n+k)^\alpha}$,
we get 
$$F_1\leq \exp(2DC_1 k^\alpha).$$
 On the other hand
\begin{align*}
F_2&\leq \left(\sup_{a\in I}\sup_{l\in [1,L]} \max(1,w_l(a))\right)^L \frac{1}{C_2}\exp(-C_3k^\alpha).
\end{align*}
Hence
$$\|T_\lambda^n S_\mu^{n+k}u\|\leq M\exp((2DC_1-C_3)k^\alpha)$$
for some constant $M$ depending only on $u$ and on the weight, but not on $k$ and $n$.
Thus, we get the result by picking $D$ sufficiently small.
\end{proof}

Summarizing we have obtained the following readable corollary.

\begin{corollary}\label{cor:critere1}
Let $\gamma\in (0,d]$ and let $\Lambda\subset I^d$ be a compact set with homogeneous box
dimension at most $\gamma$. Let $\alpha\in (0,1/\gamma)$ and let $(w(a))_{a\in I}$ be a continuous family of positive weights. Assume that there exist $C_1,C_2,C_3>0$ and $N\geq0$ such that, for all $n\geq N$, \begin{itemize}
    \item $a \in I \mapsto\sum_{j=1}^n\log(w_j(a))$ is $C_1n^\alpha$-Lipschitz;
    \item $\inf_{a\in I}w_1(a)\cdots w_n(a)\geq C_2\exp(C_3n^\alpha).$
\end{itemize}
Then $\bigcap_{\lambda\in \Lambda} HC(B_{w(\lambda(1))}\times\cdots\times B_{w(\lambda(d))})$ is a dense $G_\delta$ subset of $X^d$.
\end{corollary}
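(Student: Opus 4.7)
The plan is to apply Theorem \ref{thm:multiprecised} directly. Since $\alpha\gamma<1$ by assumption, I can pick $\beta=1$, which satisfies the required inequality $\beta>\alpha\gamma$. The compactness of $\Lambda$ and its homogeneous box dimension bound are already built into the hypotheses of the corollary, so only conditions (a) and (b) of Theorem \ref{thm:multiprecised} remain to be verified.

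To check (b), I would reuse the computation carried out in the paragraph immediately preceding the lemma of this subsection. The Lipschitz hypothesis $|f_n(a)-f_n(b)|\leq C_1 n^\alpha |a-b|$ on $f_n(a)=\sum_{j=1}^n\log w_j(a)$, combined with the fact that every $u\in\mathcal D=c_{00}^d$ is finitely supported (say in coordinates $\leq L$), yields a constant $C'>0$ for which
\[\|T_\lambda^n S_\mu^n u - u\|\leq \|u\|\bigl(\exp(C' n^\alpha \|\lambda-\mu\|)-1\bigr),\]
so that $\|T_\lambda^n S_\mu^n u - u\|<\varepsilon$ whenever $\|\lambda-\mu\|\leq \tau/n^\alpha$ with $\tau$ small enough (depending on $\veps$, $\|u\|$, and $C'$). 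This is precisely (b) of Theorem \ref{thm:multiprecised}.

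For (a), I would just invoke the preceding lemma, which is tailor-made for this step: under the two bullet-point hypotheses of the corollary it produces a universal constant $D>0$ such that, for every $u\in\mathcal D$, there exist $M,N>0$ satisfying both $\|T_\lambda^{n+k}S_\mu^n u\|\leq M/k$ and $\|T_\lambda^n S_\mu^{n+k}u\|\leq M/k$ as soon as $k\geq N$ and $\|\lambda-\mu\|\leq Dk^\alpha/(n+k)^\alpha$. Taking $C:=M$ and $\beta:=1$, this is exactly hypothesis (a) of Theorem \ref{thm:multiprecised}.

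With (a) and (b) both verified, Theorem \ref{thm:multiprecised} applies and yields the announced dense $G_\delta$ set of common hypercyclic vectors. There is no genuine obstacle at the level of this corollary: all the analytic work was done in the preceding lemma (for (a)) and in the preparatory estimate just before it (for (b)); the statement is a clean synthesis of these two ingredients through the general criterion.
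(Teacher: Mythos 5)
Your proposal is correct and matches the paper's own (implicit) proof: the paper derives this corollary by "summarizing" exactly the two ingredients you cite, namely the preparatory Lipschitz computation for condition (b) and the preceding lemma for condition (a), fed into Theorem \ref{thm:multiprecised}. Your explicit choice $\beta=1$ is the right one, since $\alpha\gamma<1$ guarantees $\beta>\alpha\gamma$ and the lemma delivers precisely the decay rate $M/k$.
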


Corollary \ref{cor:critere1} yields immediately Corollary \ref{cor:positifnalpha} stated in the introduction.
Combining Example \ref{ex:expo} and Example \ref{cor:expo}, we get the following interesting corollary.

\begin{corollary}
Let $d\ge 1$. There exists a family of operators $(T_a)_{a\in (0,+\infty)}$ on $\ell_p(\NN)$, $p\in[1,+\infty)$, or on $X=c_0(\NN)$ such that $(T_{\lambda(1)}\oplus \cdots \oplus T_{\lambda(d)})_{\lambda\in (0,+\infty)^d}$ admits a common hypercyclic vector but $(T_{\lambda(1)}\oplus \cdots \oplus T_{\lambda(d+1)})_{\lambda\in (0,+\infty)^{d+1}}$ does not.
\end{corollary}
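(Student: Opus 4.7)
The plan is to construct $(T_a)_{a\in(0,+\infty)}$ as a family of weighted backward shifts of the type treated throughout the section, choosing the exponent parameter $\alpha$ in a window that is positive for direct sums of length $d$ and negative for direct sums of length $d+1$.

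More precisely, I would fix any $\alpha\in\bigl(\tfrac{1}{d+1},\tfrac{1}{d}\bigr)$ and set $T_a:=B_{w(a)}$ where $(w(a))_{a>0}$ is the continuous family of weights defined by $w_1(a)\cdots w_n(a)=\exp(an^\alpha)$ for all $n\geq 1$ (the family $w_n(a)=1+a/n^{1-\alpha}$ works equally well). The interval $(\tfrac{1}{d+1},\tfrac{1}{d})$ is nonempty because $d\geq 1$, which is exactly why the statement makes sense.

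The positive half then reduces to Example \ref{ex:expo} applied in dimension $d$: since $\alpha<1/d$, that example asserts that the family $(B_{w(\lambda(1))}\times\cdots\times B_{w(\lambda(d))})_{\lambda\in(0,+\infty)^d}$ admits a common hypercyclic vector; this is exactly $(T_{\lambda(1)}\oplus\cdots\oplus T_{\lambda(d)})_{\lambda\in(0,+\infty)^d}$. The negative half reduces to the contrapositive of Example \ref{cor:expo} applied in dimension $d+1$: since $\alpha>1/(d+1)$, that example forbids the existence of a common hypercyclic vector for $(B_{w(\lambda(1))}\times\cdots\times B_{w(\lambda(d+1))})_{\lambda\in(0,+\infty)^{d+1}}$, i.e.\ for $(T_{\lambda(1)}\oplus\cdots\oplus T_{\lambda(d+1)})_{\lambda\in(0,+\infty)^{d+1}}$.

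There is essentially no obstacle here: all the work has already been invested in Theorem \ref{thm:multiprecised} (whence Corollary \ref{cor:positifnalpha} and Example \ref{ex:expo}) and in Theorem \ref{thm:hausdorffintro} (whence Corollary \ref{cor:sizenalpha} and Example \ref{cor:expo}). The only point requiring a line of justification is that the two thresholds $1/d$ and $1/(d+1)$ are strict enough to leave room between them, which is what the choice of $\alpha$ in the open interval $(\tfrac{1}{d+1},\tfrac{1}{d})$ guarantees.
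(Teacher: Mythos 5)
Your proposal is correct and is essentially identical to the paper's own proof: the authors also take $T_a=B_{w(a)}$ with $w_1(a)\cdots w_n(a)=\exp(an^\alpha)$ for $\alpha\in\left(\frac{1}{d+1},\frac{1}{d}\right)$ and invoke Example \ref{ex:expo} for the positive half and Example \ref{cor:expo} for the negative half. No further comment is needed.
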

\begin{proof}
Choose $T_a=B_{w(a)}$ with $w_1(a)\cdots w_n(a)=\exp(a n^{\alpha})$
and 
\[\frac{1}{d+1}<\alpha< \frac{1}{d}.\]
\end{proof}

Observe that Example \ref{ex:expo} and Example \ref{cor:expo} do not settle the case $\alpha=1/d$.

\begin{question}
Let $d\geq 2$, $X=\ell_p(\NN)$, $p\in[1,+\infty)$, or $X=c_0(\NN)$ and let $(w(a))_{a>0}$ be the family of weights defined by $w_1(a)\cdots w_n(a) := \exp(a n^{1/d})$. Does $(B_{w(\lambda(1))}\times\cdots\times B_{w(\lambda(d))})_{\lambda(0,+\infty)^d}$ admit a common hypercyclic vector?
\end{question}

Note that it is also possible to have a family of operators $(T_a)_{a\in (0,+\infty)}$ such that for every $d\ge 1$, $(T_{\lambda(1)}\oplus \cdots \oplus T_{\lambda(d)})_{\lambda\in (0,+\infty)^d}$ admits a common hypercyclic vector. 
\begin{example}
Let $d\geq 1$,  $X=\ell_p(\NN)$, $p\in[1,+\infty)$, or on $X=c_0(\NN)$ and $(w(a))_{a>0}$ be the family of weights defined by $w_1(a)\cdots w_n(a) = 2^n n^a$. Then $\bigcap_{\lambda\in(0,+\infty)^d}HC(B_{w(\lambda(1))}\times\cdots\times B_{w(\lambda(d))})\neq\varnothing$.
\end{example}
\begin{proof}
Let $\alpha\in(0,1)$.
By definition, $a\in (0,+\infty) \mapsto\sum_{i=1}^n\log(w_i(a))$ is $\log(n)$-Lipschitz for all $n\geq1$. In particular it is $C_1 n^\alpha$-Lipschitz if $C_1$ is big enough. We then observe that $w_1(a)\cdots w_n(a) = 2^n n^a\geq \exp(C_2 n^\alpha)$ for some sufficiently small $C_2>0$. Hence the result follows from Corollary \ref{cor:critere1}.
\end{proof}

Following the same lines we can generalize the previous example by taking $\rho>1$, $\alpha>0$ and defining $w_1(a)\cdots w_n(a):=\rho^{n^{\alpha}}n^a$. 

\medskip

Corollary~\ref{cor:critere1} can also be applied to products of weighted shifts, in exactly the same way, when $\Lambda\subset\RR^d$ is a $\beta$-H\"older curve, leading to Example \ref{ex:holder} since a $\beta$-H\"older curve
has homogeneous box dimension at most $1/\beta$. When $\beta=1$, this last result is slightly weaker than Theorem \ref{thm:lipschitz}, leading to the following question.

\begin{question}
Let $\Lambda\subset (0,+\infty)^d$ be a $\beta$-H\"older curve for some $\beta\in(0,1)$. Let $(w(a))_{a>0}$ be the family of weights defined by $w_1(a)\cdots w_n(a)=\exp(an^\beta)$ for all $n\geq 1$. Does $(B_{w(\lambda(1))}\times\cdots\times B_{w(\lambda(d))})_{\lambda\in\Lambda}$ admit a common hypercyclic vector?
\end{question}

\medskip

Now let us focus on the case $w_n(a)=1+\frac a n$. The product $w_1(a)\cdots w_n(a)$ behaves like $n^a$, therefore $a\mapsto \sum_{i=1}^n \log(w_i(a))$ is $\log(n)$-Lipschitz. In particular, it is $Cn^\alpha$-Lipschitz for all $\alpha>0$, which means that we may verify (a) of Theorem \ref{thm:multiprecised} with arbitrarily small values of $\beta$. Nevertheless the product $w_1(a)\cdots w_n(a)$ does not grow sufficiently fast in order to apply Corollary \ref{cor:critere1}? This leads us to the forthcoming result, suitable for weights with slow varying weights.

\begin{corollary}\label{cor:critere2}
Let $X=\ell_p(\NN)$, $p\in[1,+\infty)$, or $X=c_0(\NN)$ and let $(w(a))_{a\in I}$ be a continuous family of positive weights. Assume that there exist $C_1,C_2,\kappa>0$ and $N\geq0$ such that, for all $n\geq N$, 
\begin{itemize}
    \item $a\in I  \mapsto\sum_{j=1}^n\log(w_j(a))$ is $C_1\log(n)$-Lipschitz;
    \item $\inf_{a\in I} w_1(a)\cdots w_n(a)\geq C_2n^\kappa.$
\end{itemize}
Then  $\bigcap_{\lambda\in I^d}HC(B_{w(\lambda(1))}\times\cdots\times B_{w(\lambda(d))})\neq\varnothing$.
\end{corollary}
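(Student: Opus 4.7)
The plan is to apply Theorem~\ref{thm:multiprecised} with $\Lambda=I^d$; if $I$ is noncompact, one first exhausts by an increasing union of compact cubes and takes the resulting Baire intersection. Each compact $K\subset I^d$ has $\dboxhom(K)\leq d$, so we take $\gamma=d$. We choose $\alpha\in(0,\min(1,\kappa)/d)$, then $D>0$ with $2C_1 D+\alpha d<\kappa$, then $\beta\in(\alpha d,\kappa-2C_1 D)$; these choices are compatible. Fix $u\in c_{00}^d$ with coordinate supports in $[0,L]$. Continuity of the weights and compactness of $I$ provide finite constants $K_L$ and $M_L$ bounding respectively the Lipschitz norms of $a\mapsto f_l(a)$ for $l\leq L$ and the products $w_1(a)\cdots w_l(a)$ for $a\in I$ and $l\leq L$.

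Condition (b) is routine. Writing $G(l)=\prod_{j=l+1}^{l+n}w_j(\lambda)/w_j(\mu)$, one has $\|T_\lambda^nS_\mu^nu-u\|\leq \|u\|\max_{l\leq L}|G(l)-1|$, and combining log-Lipschitz on the tail with the Lipschitz bound $K_L$ on the head gives $|\log G(l)|\leq (C_1\log(L+n)+K_L)\|\lambda-\mu\|$. Under $\|\lambda-\mu\|\leq \tau/n^\alpha$, since $\sup_{n\geq 1}\log(L+n)/n^\alpha<+\infty$, the right-hand side is $O(\tau)$ uniformly in $n$, and (b) holds for $\tau$ small.

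For condition (a), the bound on $\|T_\lambda^{n+k}S_\mu^nu\|$ is trivially $0$ whenever $k>L$, so pick $N>L$. The second bound decomposes as $T_\lambda^nS_\mu^{n+k}u=\sum_{l\leq L}u_l F_1(l)F_2(l)e_{l+k}$, with head $F_2(l)=1/\prod_{j=l+1}^{l+k}w_j(\mu)$ and tail $F_1(l)=\prod_{j=l+k+1}^{l+n+k}w_j(\lambda)/w_j(\mu)$. The polynomial growth hypothesis gives $F_2(l)\leq M_L/(C_2 k^\kappa)$. The crucial estimate is $F_1(l)\leq Ck^{2C_1 D}$, uniformly in $n\geq 0$ and $k\geq N$ under $\|\lambda-\mu\|\leq Dk^\alpha/(n+k)^\alpha$: log-Lipschitz yields $|\log F_1(l)|\leq 2C_1\log(L+n+k)\|\lambda-\mu\|$, so setting $m=n+k$ we get $\log F_1(l)\leq 2C_1 D\,(k/m)^\alpha\log(L+m)$; using the universal bound $\sup_{s>0}\log s/s^\alpha=1/(\alpha e)$ and splitting cases according to whether $k\leq e^{1/\alpha}$ or not shows $(k/m)^\alpha\log(L+m)\leq\log k+O(1/\alpha+L)$. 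Combining, $F_1(l)F_2(l)\leq C'/k^{\kappa-2C_1 D}\leq C/k^\beta$, so (a) is satisfied, and Theorem~\ref{thm:multiprecised} produces a dense $G_\delta$ of common hypercyclic vectors.

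The main obstacle is precisely this tail estimate. In the $n^\alpha$-Lipschitz regime of Corollary~\ref{cor:critere1}, the analogous tail bound is only $F_1\leq\exp(Ck^\alpha)$, which is super-polynomial in $k$ and requires the exponential growth hypothesis to be absorbed. Here the log-Lipschitz hypothesis gives a strictly polynomial bound $k^{2C_1 D}$ on the tail, and the polynomial growth $k^\kappa$ of the head is just enough to beat it, provided $D$ is taken small relative to $\kappa$; this is a critical regime for Theorem~\ref{thm:multiprecised}, and the fact that the radius constraint $\|\lambda-\mu\|\leq Dk^\alpha/(n+k)^\alpha$ converts log-Lipschitz into a polynomial-in-$k$ tail bound is exactly the mechanism that makes the corollary work.
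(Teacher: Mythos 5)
Your proposal is correct and follows essentially the same route as the paper: apply Theorem~\ref{thm:multiprecised} with $\gamma=d$, suitable $\alpha<\min(1,\kappa)/d$, $D$ small and $\beta\in(\alpha d,\kappa-2C_1D)$, and verify (a) via the $F_1F_2$ decomposition, the key point being that under $\|\lambda-\mu\|\leq Dk^\alpha/(n+k)^\alpha$ the log-Lipschitz hypothesis yields the polynomial tail bound $F_1\leq Ck^{2C_1D}$ (the paper gets this from the monotonicity of $m\mapsto\log(m)/m^\alpha$ for $k$ large, which is equivalent to your case-splitting). Your treatment of condition (b) and of the noncompact $I$ by exhaustion is also consistent with the paper's setup.
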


\begin{proof}
We follow the proof of Corollary \ref{cor:critere1}. Let $C_1,C_2,\kappa,N$ be given by the assumptions. Let $\alpha\in(0,1/d)$ be such that $\kappa/\alpha>d$ and let $\beta\in(0,\kappa)$ be such that $\beta/\alpha>d$. Condition (b) of Theorem \ref{thm:multiprecised} is clearly satisfied.  To prove (a), we keep the same notations. Provided $\|\lambda-\mu\|\leq \frac{Dk^\alpha}{(n+k)^\alpha}$, we may write
\begin{align*}
F_1&\leq \exp\big(2C_1\log(n+k)\|\lambda-\mu\|\big)\\
&\leq \exp\left (2DC_1\log(n+k) \frac{k^\alpha}{(n+k)^\alpha}\right).
\end{align*}
Now, provided $k$ is large enough
(we require $\alpha\log k>1$), the function $n\mapsto \frac{\log(n+k)}{(n+k)^\alpha}$ is decreasing on 
$[0,+\infty)$, so that
$$F_1\leq \exp(2DC_1\log k)=k^{2DC_1}.$$
This implies that
$$F_1F_2\leq C' k^{2DC_1-\kappa}\leq C' k^{-\beta}$$
provided $D$ has been chosen so small that $\beta+2DC_1<\kappa$. Hence, condition (a) of Theorem \ref{thm:multiprecised} is also satisfied.
\end{proof}

\begin{example}
Let $X=\ell_p(\NN)$, $p\in[1,+\infty)$, or $X=c_0(\NN)$ and let $(w(a))_{a>0}$ be the family of weights defined by $w_n(a) = 1 + \frac{a}{n}$ (resp.  $w_n(a) := \big(1 + \frac{1}{n}\big)^a$). Then $\bigcap_{\lambda\in(0,+\infty)^d}HC(B_{w(\lambda(1))}\times\cdots\times B_{w(\lambda(d))})\neq\varnothing$.
\end{example}

%{\color{blue}Remarque à ne pas inclure dans la version finale : pour cette question, on pourrait utiliser le fait qu'une courbe Holder garde encore des traces du fait que c'est l'image d'un ensemble de dimension 1 pour utiliser l'image par f d'un recouvrement de type Costakis/Sambarino par f. Cela dit, mes premiers calculs n'ont rien donné....}

Let us also show how we may apply Theorem \ref{thm:multiprecised} to get a common hypercyclic vector for $(e^{a}B\times e^{b}B)_{(a,b)\in\Lambda}$ with $\Lambda$ a classical fractal set. 

\begin{example}\label{cor:cantor}
Let $X=\ell_p(\NN)$, $p\in[1,+\infty)$, or $X=c_0(\NN)$. Let $\Lambda$ be a homogeneous Cantor subset of $(0,+\infty)^2$ with dissection ratio $\rho\in(0,1/4)$. Then $\bigcap_{(a,b)\in\Lambda}HC(e^a B\times e^b B)\neq\varnothing$. 
\end{example}

\begin{proof}
We may apply Corollary \ref{cor:critere1} since $\Lambda$ has homogeneous box dimension at most $-\ln 4/\ln \rho<1$ (we apply the definition with $r=4$).  
\end{proof}

Corollary \ref{cor:rolewicz} and Example \ref{cor:cantor} leave open the case of the Cantor set with dissection ratio $\rho=1/4$. More generally, for $\Lambda$ a compact subset of $(0,+\infty)^2$, we know that 
$$\dimh(\Lambda)> 1\implies \bigcap_{(a,b)\in\Lambda}HC(e^{a} B\times e^{b}B)=\varnothing$$
$$\text{and}\quad\dboxhom(\Lambda)<1\implies  \bigcap_{(a,b)\in\Lambda}HC(e^{a} B\times e^{b}B)\neq\varnothing.$$

It is natural to ask whether we can go further. In the first implication, we cannot replace the Hausdorff dimension by the homogeneous box dimension.

\begin{proposition}
There exists a compact subset $\Lambda\subset (0,+\infty)^2$ with $\dim_{HB}(\Lambda)=2$ such that
$\bigcap_{(a,b)\in\Lambda}HC(e^{a}B\times e^{b}B)\ne \varnothing$.
\end{proposition}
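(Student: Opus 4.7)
The plan is to take $\Lambda=A\times A\subset[1,2]^2$ where $A\subset[1,2]$ is a countable compact subset with $\dboxsup(A)=1$. A concrete choice is the convergent sequence
\[A=\{1\}\cup\{1+1/\log n:\ n\geq 2\}.\]
Because $\Lambda$ is countable, one automatically has $\dimh(\Lambda)=0\leq 1$, so the Hausdorff-dimension obstruction from Corollary~\ref{cor:rolewicz} is vacuously satisfied and the statement has a chance of holding.

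Next I would verify that $\dboxhom(\Lambda)=2$. Consecutive points $1+1/\log n$ and $1+1/\log(n+1)$ of $A$ are at distance of order $1/(n\log^2 n)$, so at resolution $\veps$ the well-spaced points contribute roughly $1/(\veps\log^2(1/\veps))$ balls, while the tail lies in an interval of length about $1/\log(1/\veps)$ around $1$ and requires about $1/(\veps\log(1/\veps))$ balls; the total $N_\veps(A)$ is thus $\veps^{-1+o(1)}$, giving $\dboxsup(A)=1$. A direct box count on the product shows $N_\veps(A\times A)\asymp N_\veps(A)^2$, hence $\dboxsup(\Lambda)=2$. Combined with the general inequality $\dboxsup\leq\dboxhom$ and with the observation from the paper that every compact subset of $\RR^2$ has homogeneous box dimension at most $2$, this forces $\dboxhom(\Lambda)=2$.

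The existence of a common hypercyclic vector then comes from a purely Baire-category argument. For every $(a,b)\in(0,+\infty)^2$, each factor $e^aB$ and $e^bB$ satisfies the Hypercyclicity Criterion along the full sequence $(n)$, with $\mathcal D=c_{00}$ and right inverse $e^{-a}F$ (resp.\ $e^{-b}F$), where $F$ is the unweighted forward shift. The direct sum $e^aB\times e^bB$ inherits the criterion along the same sequence, so $HC(e^aB\times e^bB)$ is a dense $G_\delta$ subset of $X\times X$. Since $\Lambda$ is countable, the Baire category theorem ensures that
\[\bigcap_{(a,b)\in\Lambda}HC(e^aB\times e^bB)\]
is itself a dense $G_\delta$, and in particular nonempty.

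The only delicate step is the choice of $A$: one needs a countable compact subset of $\RR$ whose upper box dimension attains the maximal value $1$. Standard candidates like $\{1/n^\alpha\}\cup\{0\}$ only yield upper box dimension $1/(1+\alpha)<1$, which after taking products would give $\dboxhom(\Lambda)<2$; the slow logarithmic decay $1/\log n$ is precisely what pushes $\dboxsup(A)$ up to $1$ while keeping $A$ countable, so that $\dimh(\Lambda)=0$ remains automatic and the Baire argument of the previous paragraph still applies.
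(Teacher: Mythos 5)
Your proof is correct, but it takes a genuinely different route from the paper's. The paper builds $\Lambda$ as a \emph{countable union of Lipschitz curves} (a horizontal segment $[1,2]\times\{1\}$ together with shrinking vertical fibers above the dyadic points), invokes Theorem \ref{thm:lipschitz} to get a dense $G_\delta$ of common hypercyclic vectors on each curve, applies Baire to the countable union, and then does a box-counting argument on the fibers to get $\dboxsup(\Lambda)\geq 2$. You instead take $\Lambda=A\times A$ with $A$ a \emph{countable} compact set of box dimension $1$ (the logarithmically decaying sequence is the right choice, and your asymptotics for the gaps and for $N_\veps(A)$ are correct, as is the product formula for grid counts); then the dynamical part reduces to Rolewicz plus the Baire category theorem, since each $HC(e^aB\times e^bB)$ is a dense $G_\delta$ and $\Lambda$ is countable. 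Your argument is strictly more elementary — it does not use Theorem \ref{thm:lipschitz} or any covering machinery — and it even gives $\dimh(\Lambda)=0$; the paper's example, being a union of curves, has $\dimh(\Lambda)=1$ and thus saturates the necessary condition of Corollary \ref{cor:rolewicz}, which is a mild aesthetic advantage but is not required by the statement. Both arguments correctly conclude $\dboxhom(\Lambda)=2$ from $2=\dboxsup(\Lambda)\leq\dboxhom(\Lambda)\leq d=2$. One cosmetic slip: $1+1/\log 2>2$, so your $A$ is not contained in $[1,2]$; either start the sequence at $n\geq 3$ or simply note that $A\subset(0,+\infty)$ is all that is needed.
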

\begin{proof}
Let $I=[1,2]\times\{1\}$ and for any $n\ge 1$, any $0< k< 2^n$, $I_{n,k}=\{1+\frac{k}{2^n}\}\times [1,1+\frac{1}{n}]$. We consider
\[\Lambda=I\cup \bigcup_{n\ge 1}\bigcup_{0< k< 2^n} I_{n,k}.\]
We first remark that $\Lambda$ is closed and thus compact. Let $(\lambda_m,\mu_m)_{m\ge 1}\subset \Lambda$ be a sequence converging to $(\lambda,\mu)\in [1,2]^2$.
If $\mu_m=1$ infinitely often then $\mu=1$ and thus $(\lambda,\mu)\in \Lambda$. If we now assume that $\mu_m\ne 1$ for any $m$, then $(\lambda_m,\mu_m)\in I_{n_m,2k_m+1}$ for a unique $n_m\ge 1$ and a unique $0\le k_m\le 2^{n_m-1}-1$. In particular, $\mu_m\in [1,1+\frac{1}{n_m}]$. Therefore, if $\sup_{m}n_m=\infty$, we get $(\lambda,\mu)\in \Lambda$ since $\mu=1$ and if $\sup_{m}n_m<\infty$, up to an extraction, the sequences $(n_m)_{m\ge 1}$ and $(k_m)_{m\ge 1}$ are ultimately constant and $(\lambda,\mu)\in \Lambda$ since each $I_{n,k}$ is closed.

Since $\Lambda$ is a countable union of Lipschtiz curves, it is a consequence of Theorem \ref{thm:lipschitz} that $\bigcap_{(a,b)\in\Lambda}HC(e^{a}B\times e^{b}B)\ne \varnothing$. It remains to show that $\dim_{HB}(\Lambda)=2$. Note that it suffices to show that $\overline{\dim}_{B}(\Lambda)\ge 2$.
Let $m\ge 2$. How many cubes of size $\frac{1}{2^m}$ are needed to cover $\Lambda$? 
To cover each fiber $I_{m-1,k}$, $0<k<2^{m-1}$, we need at least $2^m/(m-1)$ cubes of size $2^{-m}$. Note that such a cube cannot intersect another fiber of the same generation $I_{m-1,l}$ with $l\neq k$. Therefore, in order to cover $\bigcup_{0<k<2^{m-1}}I_{m-1,k}$, and thus $\Lambda$, we need at least $(2^{m-1}-1)\cdot 2^m/(m-1)$ cubes of size $2^{-m}$. We conclude that
\begin{align*}
N(2^{-m})\geq c \frac{4^m}m
\end{align*}
for some $c>0$. Therefore, 
\[\overline{\dim}_{B}(\Lambda)\ge \lim_{m\to+\infty}\frac{m\log 4-\log m}{m\log 2}=2.\]

%We need $2^m$ cubes of size $\frac{1}{2^m}$ to cover $I$. However, these cubes cannot intersect $[1,2]\times ]1+\frac{1}{2^m},2]$
%and a cube of size $\frac{1}{2^m}$ can intersect at most one set $\{1+\frac{k}{2^{m-1}}\}\times ]1+\frac{1}{2^m},1+\frac{1}{m-1}]$ for some $0< k< 2^{m-1}$.
%Therefore, in order to cover these sets and thus to cover $\Lambda$, we still need at least 
%\[(2^{m-1}-1) \left\lfloor\frac{\frac{1}{m-1}-\frac{1}{2^m}}{\frac{1}{2^m}}\right\rfloor\ \text{cubes}.\]
%We conclude that
%\begin{align*}
%N(\frac{1}{2^m})&\ge 2^m+(2^{m-1}-1) \left\lfloor \frac{2^m}{m-1}-1\right\rfloor \\
%&\ge 2^m+\frac{2^{2m-1}}{m-1}-\frac{2^m}{m-1}-2^{m}\\
%&\ge 2^m+ \frac{2^{2m-1}}{m-1}-2^{m+1} \\
%&\ge \frac{2^{2m-1}}{m-1}-2^m\\
%&\ge 2^m\left(\frac{2^{m-1}}{m-1}-1\right)
%\end{align*}
%and thus
%\[\overline{\dim}_{B}(\Lambda)\ge \lim_{m\to +\infty}  \frac{m\log(2)+\log\left(\frac{2^{m-1}}{m-1}-1\right)}{m\log(2)}=
%\lim_{m\to +\infty}  \frac{m\log(2)+\log\left(\frac{2^{m-1}}{m-1}\right)}{m\log(2)}
%=2.\]
\end{proof}

However the following question is open.

\begin{question}
Let $\Lambda$ be a compact subset of $(0,+\infty)^2$ such that $\dimh(\Lambda)<1$. Does the family $(e^{a} B\times e^{b}B)_{(a,b)\in\Lambda}$ admit a common hypercyclic vector?
\end{question}

At least, we can show that the condition $\dimh(\Lambda)\leq 1$ is not sufficient to obtain a common hypercyclic vector.

\begin{proposition}
 There exists a set $\Lambda\subset(0,+\infty)^2$ such that $\dimh(\Lambda)=1$ and nevertheless $\bigcap_{(a,b)\in\Lambda}HC(e^{a} B\times e^{b}B)=\varnothing$.
\end{proposition}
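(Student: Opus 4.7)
The plan is to apply Theorem~\ref{thm:hausdorff} with a gauge function $\phi$ only marginally smaller than $x\mapsto x$ near the origin, and to construct $\Lambda$ as a generalized Cantor set in $[1,2]^2$ with Hausdorff dimension exactly $1$ but positive $\phi$-Hausdorff measure. Define
\[
\phi(x):=\frac{x}{(\log(1/x))^2}\quad\text{on }(0,1/e],
\]
extended continuously by $\phi(x):=x$ for $x\ge 1/e$; this is a continuous nondecreasing gauge function with $\phi(x)\le x$ on $(0,1/e]$ and $\sum_{n\ge 1}\phi(2/n)<+\infty$, since $\phi(2/n)\sim 2/(n(\log n)^2)$. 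Inspecting the computation in the proof of Corollary~\ref{cor:dimension} with the weights $w_j(a)=e^a$ (so $\alpha=1$), the hypothesis of Theorem~\ref{thm:hausdorff} is met for the family $(e^aB\times e^bB)_{(a,b)\in\Lambda}$ with $v=(e_0,e_0)$, $\delta=1/2$ and $\psi(n)=n/2$: whenever $\|T_\lambda^n u-v\|<1/2$ and $\|T_\mu^n u-v\|<1/2$, a coordinate-wise estimate gives $\|T_\lambda^n u-T_\mu^n u\|\ge (n/2)\|\lambda-\mu\|$. It therefore suffices to exhibit $\Lambda\subset (0,+\infty)^2$ with $\dimh(\Lambda)=1$ and $\mathcal H^\phi(\Lambda)>0$.

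For the construction, set $N_n:=\lfloor 2^n n^2\rfloor$ and choose $n_0$ large enough so that $2N_{n-1}\le N_n\le 3N_{n-1}$ for every $n>n_0$; this is possible since $N_n/N_{n-1}\to 2$. Take $\Lambda_{n_0}\subset [1,2]^2$ to be any union of $N_{n_0}$ disjoint closed dyadic squares of side $2^{-n_0}$. Inductively, given $\Lambda_{n-1}$ consisting of $N_{n-1}$ dyadic squares of side $2^{-(n-1)}$, subdivide each parent into its four dyadic subsquares of side $2^{-n}$ and keep either $2$ or $3$ per parent so that exactly $N_n$ subsquares are retained overall (the inequality $2N_{n-1}\le N_n\le 3N_{n-1}$ makes this always possible). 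Let $\Lambda_n$ be the union of the selected subsquares and set $\Lambda:=\bigcap_{n\ge n_0}\Lambda_n$.

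The covering of $\Lambda$ by its $N_n$ dyadic squares of side $2^{-n}$ gives, for every $s>1$, $\sum_B(\diam B)^s=O(2^{n(1-s)}n^2)\to 0$, hence $\dimh(\Lambda)\le 1$. For the lower bound on $\mathcal H^\phi(\Lambda)$, define a probability measure $\mu$ on $\Lambda$ by assigning mass $1/N_n$ to each selected level-$n$ square (this is consistent since each level-$(n-1)$ square of mass $1/N_{n-1}$ is split into $2$ or $3$ children accounting for the full mass, up to the ratio $N_n/N_{n-1}$; the measure is obtained as the weak-$*$ limit of the normalized Lebesgue measures on $\Lambda_n$). Given $x\in\Lambda$ and $r\in(0,1)$, pick $n$ with $2^{-(n+1)}<r\le 2^{-n}$; the ball $B(x,r)$ meets a bounded (independent of $n$) number of level-$n$ squares, so $\mu(B(x,r))\le C/N_n\le C'/(2^n n^2)$. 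On the other hand $\phi(r)\ge \phi(2^{-(n+1)})\ge c/(2^n n^2)$, so $\mu(B(x,r))\le C''\phi(r)$ uniformly in $x,r$ and the mass distribution principle yields $\mathcal H^\phi(\Lambda)\ge 1/C''>0$. Since $\phi(x)\le x$ for small $x$, $\mathcal H^1(\Lambda)\ge \mathcal H^\phi(\Lambda)>0$, hence $\dimh(\Lambda)\ge 1$ and finally $\dimh(\Lambda)=1$. Theorem~\ref{thm:hausdorff} then gives $\bigcap_{(a,b)\in\Lambda}HC(e^aB\times e^bB)=\varnothing$.

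The delicate point is the balance in the growth of $N_n$: it must be at least $c\cdot 2^n n^2$ so that the mass distribution principle produces a positive $\phi$-measure, and only mildly larger than $2^n$ so that the upper box dimension (and hence the Hausdorff dimension) stays at $1$. The choice $N_n\approx 2^n n^2$ is the critical one, and it is compatible with dyadic refinement precisely because $N_n/N_{n-1}\to 2$, so eventually $2N_{n-1}\le N_n\le 3N_{n-1}$ and one can keep between $2$ and $3$ children per parent at every level.
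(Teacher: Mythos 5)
Your overall strategy is the same as the paper's: take the gauge $\phi(x)=x/(\log(1/x))^2$, check via Theorem~\ref{thm:hausdorff} (with $\psi(n)=n/2$, exactly as in the proof of Corollary~\ref{cor:dimension}) that a common hypercyclic vector forces $\mathcal H^\phi(\Lambda)=0$, and then exhibit a Cantor-type set with $\dimh(\Lambda)=1$ but $\mathcal H^\phi(\Lambda)>0$. That first half is fine. The paper realizes the second half with a homogeneous Cantor set keeping all $4^m$ children at every level but inflating the side lengths to $(1/4)^m(m+1)^2$, so the natural uniform measure gives each level-$m$ square mass exactly $4^{-m}\asymp\phi(\mathrm{width})$, and positivity of $\mathcal H^\phi$ follows from the standard mass distribution argument. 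You instead keep dyadic squares of side $2^{-n}$ and vary the \emph{number} of children ($2$ or $3$ per parent) to reach $N_n\approx 2^nn^2$ squares at level $n$.

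This is where there is a genuine gap. The measure you invoke does not exist as described: assigning mass $1/N_n$ to every selected level-$n$ square is inconsistent, since a parent of mass $1/N_{n-1}$ with $c\in\{2,3\}$ children would need $c=N_n/N_{n-1}$, which is not an integer. The weak-$*$ limit of the normalized Lebesgue measures on $\Lambda_n$, which is what you actually define, gives a level-$n$ square $Q$ the mass $\lim_m D_m(Q)/N_m$, where $D_m(Q)$ is the number of its level-$m$ descendants; since $D_m(Q)$ can range between $2^{m-n}$ and (up to the global constraint) much larger values depending on how the ``third children'' are allocated, there is no uniform bound $\mu(Q)\le C/N_n$ without a careful balancing of the allocation, which you leave arbitrary. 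With, say, the equal-splitting measure $\mu(Q)=\mu(P)/c(P)$ one only gets $\mu(Q)\le C2^{-n}$, which is larger than $\phi(2^{-n})\asymp 2^{-n}/n^2$ by a factor $n^2$; the mass distribution principle then yields only $\mathcal H^1(\Lambda)>0$, and since $\phi\le\phi_1$ one has $\mathcal H^\phi\le\mathcal H^1$, so this does \emph{not} imply $\mathcal H^\phi(\Lambda)>0$, which is the inequality you need to contradict Theorem~\ref{thm:hausdorff}. The fix is to make the construction homogeneous: either follow the paper and inflate the widths while keeping all children, or keep your dyadic scheme but let \emph{every} parent at level $n$ have the same number $c_n\in\{2,3\}$ of children, choosing $c_n=3$ on a logarithmically sparse set of levels so that $\prod_{j\le n}c_j\asymp 2^nn^2$; then the uniform measure is exactly balanced and your estimate $\mu(B(x,r))\le C\phi(r)$ goes through.
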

\begin{proof}
 Let $\phi(x)=x/\log^2(x)$. Applying Theorem \ref{thm:hausdorff} as in the proof of Corollary \ref{cor:dimension}, we know that
 $\mathcal H^\phi(\Lambda)=0$ for any $\Lambda\subset(0,+\infty)^2$ such that $\bigcap_{(a,b)\in\Lambda}HC(e^{a} B\times e^{b}B)\neq\varnothing$.
 Consider now for $\Lambda$ the Cantor set starting from $[1,2]^2$ and with non-constant dissection ratio $\frac14\times\frac{(j+1)^2}{j^2}$. Namely, $\Lambda=\bigcap_{m\geq 1}\Lambda_m$
 where $\Lambda_m$ consists in $4^m$ squares of width $ \left(\frac 14\right)^m (m+1)^2$. Then $\dimh(\Lambda)=1$ and using the mass transference principle as in 
 \cite[Example 4.3]{Falc}, $\mathcal H^\phi(\Lambda)>0$. Hence, $\bigcap_{(a,b)\in\Lambda}HC(e^{a} B\times e^{b}B)=\varnothing$.
\end{proof}

For this last example, it is easy to show that one also has $\dboxhom(\Lambda)=1$.

%%%%%%%%%%%%%%%%%%%%%%%
%%% SEQUENCE OF INTEGERS %%%%%%%%
%%%%%%%%%%%%%%%%%%%%%%%

\subsection{A lemma on sequences of integers}

We now proceed with the proof of Theorem \ref{thm:multiprecised}. Let us start with $\Lambda$ a compact subset of $\mathbb R^d$ with homogeneous box dimension at most $\gamma\in(0,d]$. In order to apply the Basic Criterion, we will need a covering of $\Lambda$. Natural coverings are given by the definition of the homogeneous box dimension, namely by the sets $(\Lambda_\bk)_{\bk\in I_r^m}$ for a given value of $m$. As pointed out above,
the way we order these sets is very important. We will choose the ordering obtained by endowing $I_r^m$ with its natural lexicographic order:
\begin{align*}
(i_1,\dots,i_m)<(j_1,\dots,j_m)&\iff \exists p\in\{1,\dots,m\},\ i_1=j_1,\dots,i_{p-1}=j_{p-1}\textrm{ and }j_p>i_p.
\end{align*}

We first define the sequence $(n_{\bk})_{\bk\in I_r^m}$.

\begin{lemma}\label{lem:sequence}
Let $\alpha>0$, $\rho\in(0,1)$ and $r\geq 2$ be such that $\rho^{1/\alpha}r<1$. Then there exist
$c_1>0$ and $c_2>0$ such that, for all $m\geq 1$, for all $n_1\geq 1$, for all $A>0$, the sequence $(n_\bk)_{\bk\in\Imr}$ defined by 
\begin{align*}
n_{1,\dots,1}&=n_1\\
n_{k_1,\dots,k_m}&=\left\lfloor \frac{1}{1-\rho^{p/\alpha}}n_{k_1,\dots,k_{p}-1,r,\dots,r}\right\rfloor +A
\end{align*}
for $p=1,\dots,m$, $k_{p+1}=\dots=k_m=1$,  $k_p\neq 1$ satisfies
$$n_{r,\dots,r}\leq c_1 n_1+c_2 r^m A.$$
\end{lemma}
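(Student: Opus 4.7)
The plan is to unroll the recursion along the lexicographic traversal of $I_r^m$. For each $\bk\in I_r^m\setminus\{(1,\dots,1)\}$, write $\bk=(k_1,\dots,k_{p-1},k_p,1,\dots,1)$ with $k_p\ne 1$ (so $p$ is the last non-trivial coordinate); then its lexicographic predecessor is exactly $(k_1,\dots,k_{p-1},k_p-1,r,\dots,r)$, and the defining formula reads $n_{\bk}\le \lambda_p\, n_{\text{predecessor}}+A$, where $\lambda_p:=1/(1-\rho^{p/\alpha})$. A quick counting argument shows that among the $r^m-1$ such transitions, exactly $r^{p-1}(r-1)$ are of type $p$, for each $p\in\{1,\dots,m\}$ (total $\sum_p r^{p-1}(r-1)=r^m-1$).

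Then a straightforward induction along the lex order yields
\[
n_{r,\dots,r}\;\le\; \Lambda_m\, n_1 \;+\; A\sum_{j=0}^{r^m-2}\prod_{k=j+1}^{r^m-2}\lambda_{p_k}
\;\le\; \Lambda_m\, n_1 \;+\; \Lambda_m (r^m-1)\, A,
\]
where $\Lambda_m:=\prod_{p=1}^{m}\lambda_p^{r^{p-1}(r-1)}$, since every tail product is dominated by the full product $\Lambda_m$. Thus the lemma reduces to showing that $\Lambda_m$ is bounded by a constant depending only on $\alpha,\rho,r$, independently of $m$.

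The main obstacle is precisely this uniform bound. Taking logarithms,
\[
\log \Lambda_m = (r-1)\sum_{p=1}^{m} r^{p-1}\log \lambda_p = -(r-1)\sum_{p=1}^{m} r^{p-1}\log(1-\rho^{p/\alpha}).
\]
Using the elementary inequality $-\log(1-x)\le 2x$ for $x\in[0,1/2]$, we choose $p_0$ large enough so that $\rho^{p/\alpha}\le 1/2$ for $p\ge p_0$, which splits the sum into a finite part (bounded by a constant depending on $p_0,r,\alpha,\rho$) and a tail controlled by
\[
2\sum_{p\ge p_0}r^{p-1}\rho^{p/\alpha} \;=\; \frac{2}{r}\sum_{p\ge p_0}(r\rho^{1/\alpha})^{p},
\]
which is a convergent geometric series precisely because the hypothesis $\rho^{1/\alpha}r<1$ is assumed. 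Hence $\log \Lambda_m$ is bounded by some $\log C$ uniformly in $m$, and setting $c_1=c_2=C$ gives the desired estimate $n_{r,\dots,r}\le c_1 n_1 + c_2 r^m A$.
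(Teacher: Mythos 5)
Your proof is correct, and it takes a genuinely different route from the paper's. You unroll the affine recursion in a single pass along the lexicographic order, after observing that the predecessor of $(k_1,\dots,k_{p-1},k_p,1,\dots,1)$ is exactly $(k_1,\dots,k_{p-1},k_p-1,r,\dots,r)$ and that exactly $r^{p-1}(r-1)$ of the $r^m-1$ transitions carry the factor $\lambda_p=(1-\rho^{p/\alpha})^{-1}$; the whole lemma then reduces to the uniform bound on $\Lambda_m=\prod_{p=1}^m\lambda_p^{(r-1)r^{p-1}}$, which you get from the convergence of $\sum_p r^{p-1}\rho^{p/\alpha}$, i.e.\ precisely from $\rho^{1/\alpha}r<1$. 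The paper instead argues by induction on $m$, via an auxiliary statement (Lemma~\ref{lemmabis}) in which an extra parameter $B$ is introduced so that the sub-blocks obtained by fixing the first coordinate satisfy the same recursion with $B$ replaced by $B\rho$; this produces recursively defined constants $C(m,B)$, $D(m,B)$, and the final bound $C(m,1)\leq\prod_{j=1}^\infty(1-\rho^{j/\alpha})^{-(r-1)r^{j-1}}$ is exactly your $\lim_m\Lambda_m$. Your argument is more elementary and makes the role of the hypothesis $\rho^{1/\alpha}r<1$ completely transparent; the paper's induction exploits the self-similar structure of $I_r^m$ and keeps a slightly sharper separate control of the coefficient of $A$ (your crude bound ``every tail product is at most $\Lambda_m$'' just gives $c_1=c_2$, which is perfectly sufficient for the statement). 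Both arguments rest on the same convergent product, so the constants obtained are essentially identical.
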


A key point of this lemma is that $c_1$ and $c_2$ depend neither on $m$ nor on $n_1$ nor on $A$. 
We will do the proof by induction on $m$. Nevertheless, we need to introduce auxiliary sequences to keep track of the involved constants at each step.
\begin{lemma}\label{lemmabis}
Let $\alpha>0$, $\rho\in(0,1)$ and $r\geq 2$.  Let also $B\in (0,1]$, $m\geq 1$, $n_1\geq 1$ and $A>0$. Then the sequence $(n_\bk)_{\bk\in\Imr}$ defined by
\begin{align*}
n_{1,\dots,1}&=n_1\\
n_{k_1,\dots,k_m}&=\left\lfloor \frac{1}{1-B^{1/\alpha}\rho^{p/\alpha}}n_{k_1,\dots,k_{p}-1,r,\dots,r}\right\rfloor +A
\end{align*}
for $p=1,\dots,m$, $k_{p+1}=\dots=k_m=1$,  $k_p\neq 1$ satisfies
$$n_{r,\dots,r}\leq C(m,B) n_1+ D(m,B) A,$$
where
 \begin{align*}
&C(1,B)=\left(\frac1{1-B^{1/\alpha}\rho^{1/\alpha}}\right)^{r-1}, \\
&D(1,B)=r\left(\frac1{1-B^{1/\alpha}\rho^{1/\alpha}}\right)^{r-1}, \\
&C(m,B)= \left(\frac1{1-B^{1/\alpha}\rho^{1/\alpha}}\right)^{r-1} C(m-1, B\rho)^r, \\
&D(m,B)= r \left(\frac1{1-B^{1/\alpha}\rho^{1/\alpha}}\right)^{r-1} C(m-1,B\rho)^{r-1}\big(1+D(m-1,B\rho)\big).
\end{align*}
\end{lemma}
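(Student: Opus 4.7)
The plan is induction on $m$, with the explicit formulas for $C(m,B)$ and $D(m,B)$ as the inductive target. Write $\beta:=(1-B^{1/\alpha}\rho^{1/\alpha})^{-1}\geq 1$. The base case $m=1$ only involves depth $p=1$: the recursion gives $n_k\leq \beta n_{k-1}+A$ for $k=2,\dots,r$, and iterating yields
\[ n_r\leq \beta^{r-1}n_1 + (1+\beta+\cdots+\beta^{r-2})A\leq \beta^{r-1}n_1+r\beta^{r-1}A, \]
which matches $C(1,B)=\beta^{r-1}$ and $D(1,B)=r\beta^{r-1}$.

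For $m\geq 2$, the key idea is a block decomposition by the first coordinate. Fix $j\in\{1,\dots,r\}$ and consider the sub-sequence $\tilde n^{(j)}_{j_2,\dots,j_m}:=n_{j,j_2,\dots,j_m}$, indexed by $I_r^{m-1}$ in lexicographic order. A transition inside block $j$ occurs at an original depth $p\geq 2$; setting $q:=p-1$ turns it into a depth-$q$ transition of $\tilde n^{(j)}$, and the coefficient rewrites as $(1-B^{1/\alpha}\rho^{p/\alpha})^{-1}=(1-(B\rho)^{1/\alpha}\rho^{q/\alpha})^{-1}$. Thus $\tilde n^{(j)}$ satisfies the hypotheses of the lemma at level $m-1$ with the new parameter $B':=B\rho\in(0,1]$, and the induction hypothesis provides
\[ n_{j,r,\dots,r}\leq C'\,n_{j,1,\dots,1}+D'A, \qquad C':=C(m-1,B\rho),\ D':=D(m-1,B\rho). \]
The blocks are glued by the $p=1$ transitions: $n_{j,1,\dots,1}\leq \beta n_{j-1,r,\dots,r}+A$ for $j\geq 2$. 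Setting $a_j:=n_{j,r,\dots,r}$ and combining produces the one-step recursion $a_j\leq C'\beta a_{j-1}+(C'+D')A$ for $j\geq 2$, together with $a_1\leq C' n_1+D' A$ from applying the induction hypothesis to block $j=1$.

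Iterating to $j=r$ yields
\[ a_r\leq (C'\beta)^{r-1}(C' n_1+D' A)+(r-1)(C'\beta)^{r-2}(C'+D')A, \]
where the geometric sum has been bounded by $(r-1)$ times its largest term. The $n_1$-coefficient is $\beta^{r-1}(C')^{r}=C(m,B)$ exactly. For the $A$-coefficient the needed inequality, after dividing by $\beta^{r-2}(C')^{r-2}$, reduces to $\beta C'D'+(r-1)(C'+D')\leq r\beta C'(1+D')$. This follows at once from the invariant $C'\geq 1$ (clearly preserved, since $C(1,B)=\beta^{r-1}\geq 1$) and $\beta\geq 1$: one has $\beta C'D'\leq \beta C'(1+D')$, and using $C'+D'\leq C'(1+D')$ (which holds iff $C'\geq 1$) also $(r-1)(C'+D')\leq (r-1)\beta C'(1+D')$; summing gives $r\beta C'(1+D')$, as required. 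The whole argument is organizational rather than deep, the only real input being the block decomposition together with the substitution $B\mapsto B\rho$, which tracks how the effective depth increases when descending one level; I do not anticipate any serious obstacle beyond the bookkeeping already described.
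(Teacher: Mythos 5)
Your proof is correct and follows essentially the same route as the paper's: induction on $m$, base case by iterating the depth-$1$ recursion, and inductive step by splitting into $r$ blocks according to the first index, observing that each block satisfies the level-$(m-1)$ recursion with $B$ replaced by $B\rho$, and then gluing the blocks via the depth-$1$ transitions. Your final bookkeeping (making explicit the invariants $C(m,B)\geq 1$ and $\beta\geq 1$ needed to absorb the geometric sum into $D(m,B)$) is if anything slightly more careful than the paper's.
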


\begin{proof}
We proceed by induction on $m$. To simplify the notation, let $q_B:=\frac{1}{1-B^{1/\alpha}\rho^{1/\alpha}}$. The first step $m=1$ is easy. 
Indeed, for $k=1,\dots,r-1$, we have
$$n_{k+1}\leq  q_B n_k+A$$
what leads to 
$$n_{r}\leq q_B^{r-1}n_1+\sum_{j=0}^{r-2}q_B^j A,$$
which itself gives the (nonoptimal) values for $C(1,B)$ and $D(1,B)$ as in the statement. 

Let us now assume that the property is true at rank $m-1$ and let us verify it at rank $m$. For
 $\bk\in I_r^{m-1}$ and $i\in\{1,\dots,r\}$, define
$$m_\bk(i):=n_{i,k_1,\dots,k_{m-1}}$$
and observe that, for $p=1,\dots,m-1$ and $k_p\neq 1$,
\begin{align*}
m_{k_1,\dots,k_p,1,\dots,1}(i)&=n_{i,k_1,\dots,k_p,1,\dots,1}\\
&=\left\lfloor \frac 1{1-B^{1/\alpha}\rho^{(p+1)/\alpha}}n_{i,k_1,\dots,k_{p}-1,r,\dots,r}\right\rfloor+A\\
&=\left\lfloor \frac 1{1-(B\rho)^{1/\alpha}\rho^{p/\alpha}}m_{k_1,\dots,k_{p}-1,r,\dots,r}(i)\right\rfloor+A.\\
\end{align*}
Therefore, the induction hypothesis yields, for each $i=2,\dots,r$, 
\begin{align*}
n_{i,r,\dots,r}&\leq C(m-1,B\rho) n_{i,1,\dots,1}+D(m-1,B\rho)A\\
&\leq q_B C(m-1,B\rho)n_{i-1,r,\dots,r}+\big(C(m-1,B\rho)+D(m-1,B\rho)\big)A.
\end{align*}
Hence, proceeding as in the initial step and using a last time the induction hypothesis for $i=1$, we find
\begin{align*}
n_{r,\dots,r}&\leq \big(q_BC(m-1,B\rho)\big)^{r-1}n_{1,r,\dots,r}\\
&\quad\quad\quad+(r-1)q_B^{r-1}\big(C(m-1,B\rho)+D(m-1,B\rho)\big)A\\
&\leq q_B^{r-1}C(m-1,B\rho)^r n_1+r q_B^{r-1}C(m-1,B\rho)^{r-1}\big(1+D(m-1,B\rho)\big)A.
\end{align*}
\end{proof}

\begin{proof}[Proof that Lemma \ref{lemmabis} $\Rightarrow$ Lemma \ref{lem:sequence}]
A simple induction yields
\[C(m,1)\leq \prod_{j=1}^m\bigg(\frac{1}{1-\rho^{j/\alpha}}\bigg)^{(r-1)\cdot r^{j-1}}\leq \prod_{j=1}^\infty\bigg(\frac{1}{1-\rho^{j/\alpha}}\bigg)^{(r-1)\cdot r^{j-1}}=:c_1,\]
the last infinite product being convergent by the assumption $\rho^{1/\alpha}r<1$.
 More precisely, we have
\begin{align*}
\log C(m,B)& \leq -(r-1) \sum_{j=1}^m \log(1-B^{1/\alpha}\rho^{j/\alpha})r^{j-1}\\
&\leq (r-1)\cdot C\sum_{j=1}^m B^{1/\alpha} \rho^{j/\alpha} r^{j-1}\\
&\leq C'B^{1/\alpha}
\end{align*}
for some constants $C,C'>0$ which only depend on $\rho$, $\alpha$ and $r$ (recall that $B\in(0,1]$). We use this bound to estimate $D(m,B)$:
\[D(m,B)\leq  r C(m,B) (1+D(m-1,B\rho))\le r\exp(C'B^{1/\alpha})D(m-1,B\rho) +r\exp(C'B^{1/\alpha}).\]
By another induction, we get
\begin{align*}
D(m,B)
&\leq r^{m-1}\exp\bigg(C'B^{1/\alpha}\sum_{j=0}^{m-2}\rho^{j/\alpha}\bigg)D(1,B\rho^{m-1})\\
 &\quad\quad\quad\quad+r\sum_{j=0}^{m-2}r^j \exp\bigg(C'B^{1/\alpha}\sum_{i=0}^{j}\rho^{i/\alpha}\bigg).
\end{align*}
The convergence of $\sum_j \rho^{j/\alpha}$ yields the existence of $c_2$, depending only on $\alpha$, $\rho$ and $r$, such that $D(m,1)\leq c_2 r^m$.
\end{proof}

%%%%%%%%%%%%%%%%%%%%%%%%%%%%%
%%%% COVERING %%%%%%%%%%%%%%%%%%%
%%%%%%%%%%%%%%%%%%%%%%%%%%%%%

\subsection{A covering lemma}
We now produce the desired covering together with the sequence of integers. We thus fix $\Lambda$ a compact subset of $\RR^d$  with homogeneous box dimension at most $\gamma\in (0,d]$. Let $r\geq 2$ and $C(\Lambda)>0$ be such that, for all $m\geq 1$, one can construct the compact sets $(\Lambda_\bk)_{\bk\in I_r^m}$ as in Definition \ref{def:homogeneousdimension}. We also fix $\rho=\left(\frac 1r\right)^{1/\gamma}$, $D>0$, $\alpha\in (0,1/\gamma)$ and $\beta>\alpha\gamma$. Let $c_1$, $c_2$ be the constants given by Lemma~\ref{lem:sequence}. We will assume in this subsection that 
\begin{equation}\label{eq:diamlambda}
C(\Lambda)\leq \frac{D}{(2c_1)^\alpha r^{1/\gamma}}.
\end{equation}

\begin{lemma}\label{lem:covering}
For all $\tau>0$, for all $\delta>0$, for all $N\geq 1$, there exist $q\geq 1$, an increasing sequence of integers $(n_k)_{k=1,\dots,q}$,
a sequence of parameters $(\lambda_k)_{k=1,\dots,q}\subset\Lambda$, a sequence $(\Lambda_k)_{k=1,\dots,q}$ of compact subsets
of $\Lambda$ such that
\begin{enumerate}[(a)]
\item $n_1\geq N$, $n_{j+1}-n_j\geq N$;
\item  $\Lambda=\bigcup_{k=1}^{q}\Lambda_k$ and, for all $k=1,\dots,q$, $\Lambda_k\subset \bar B(\lambda_k,\tau/n_k^\alpha)$;
\item for all $1\leq k<j\leq q$, for all $\lambda\in \Lambda_k$, for all $\mu\in \Lambda_j$, 
$$\|\lambda-\mu\| \leq \frac{D(n_j-n_k)^\alpha}{n_j^\alpha};$$
\item for all $k\in\{1,\dots,q\}$, $\sum_{j\neq k}\frac{1}{|n_j-n_k|^\beta}\leq\delta$;
\item $\sum_{j=1}^{q}\frac 1{n_j^\beta}\leq \delta$.
\end{enumerate}
\end{lemma}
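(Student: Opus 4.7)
The plan is to take a level $m$ (to be fixed), apply the definition of homogeneous box dimension at that level, enumerate the resulting family $(\Lambda_\bk)_{\bk\in I_r^m}$ lexicographically as $\bk^{(1)}<\dots<\bk^{(r^m)}$, and attach to each $\bk^{(l)}$ the integer $n_l:=n_{\bk^{(l)}}$ furnished by Lemma~\ref{lem:sequence} (applied with base value $n_1$ and increment $A$ to be tuned later). We set $\Lambda_l=\Lambda_{\bk^{(l)}}$ and pick $\lambda_l$ to be any point of $\Lambda_l$.

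The heart of the proof is the combined geometric-arithmetic estimate that establishes (c). Since $\alpha<1/\gamma$ we have $\rho^{1/\alpha}r=r^{1-1/(\alpha\gamma)}<1$, so Lemma~\ref{lem:sequence} applies and returns absolute constants $c_1,c_2$. Given $l<l'$, let $p$ be the position where $\bk^{(l)}$ and $\bk^{(l')}$ first differ. On the geometric side, both $\Lambda_l$ and $\Lambda_{l'}$ lie in the level-$(p-1)$ cell $\Lambda_{(k_1,\dots,k_{p-1})}$ of diameter at most $C(\Lambda)\rho^{p-1}$, hence $\|\lambda-\mu\|\le C(\Lambda)\rho^{p-1}$ (reading $p=1$ with the convention $\diam(\Lambda)\le C(\Lambda)$, which one may enforce without loss by enlarging $C(\Lambda)$). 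On the arithmetic side, $\bk^{(l')}$ is at or past the lex index $(k_1,\dots,k_{p-1},k_p+1,1,\dots,1)$, whose defining recursion gives
\[ n_{l'}\ge \frac{n_{(k_1,\dots,k_p,r,\dots,r)}}{1-\rho^{p/\alpha}}\ge \frac{n_{l}}{1-\rho^{p/\alpha}}, \]
so $(n_{l'}-n_{l})/n_{l'}\ge \rho^{p/\alpha}$ and therefore $D(n_{l'}-n_{l})^{\alpha}/n_{l'}^{\alpha}\ge D\rho^{p}$. The hypothesis \eqref{eq:diamlambda} is precisely calibrated so that $C(\Lambda)\rho^{p-1}\le D\rho^{p}$, delivering (c).

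The remaining conditions follow from a direct choice of parameters. Condition (a) is immediate once $n_1\ge N$ and $A\ge N$, since the recursion forces $n_{l+1}-n_{l}\ge A$. For (d), the bound $|n_l-n_{l'}|\ge|l-l'|A$ turns the left-hand side into a partial sum of $\sum_{j\ne 0}|j|^{-\beta}$ divided by $A^{\beta}$, which falls below $\delta$ once $A$ is taken large enough (depending on $r^m$, $\beta$, $\delta$). For (e), the crude estimate $\sum_l n_l^{-\beta}\le r^m/n_1^{\beta}$ imposes $n_1\gtrsim(r^m/\delta)^{1/\beta}$. Finally, (b) reduces to $n_l^{\alpha}C(\Lambda)\rho^m\le\tau$ at the largest $l$; using Lemma~\ref{lem:sequence}'s estimate $n_{(r,\dots,r)}\le c_1 n_1+c_2 r^m A$, this becomes a comparison between a quantity of order $r^{m/\beta}$ and the allowed bound of order $r^{m/(\alpha\gamma)}$.

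The main obstacle, for which Lemma~\ref{lem:sequence} is expressly engineered, is to reconcile these competing bounds. The lower bounds imposed by (d) and (e) force $n_{(r,\dots,r)}$ to grow at least like $r^{m/\beta}$, whereas (b) caps it by a constant times $r^{m/(\alpha\gamma)}$. The strict hypothesis $\beta>\alpha\gamma$ opens the gap $1/\beta<1/(\alpha\gamma)$, so for $m$ taken sufficiently large (depending on $N$, $\tau$, $\delta$, $C(\Lambda)$, and the ambient constants $c_1,c_2,r$) all constraints can be met simultaneously, and the lemma follows.
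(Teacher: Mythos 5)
Your proposal is correct and follows the same skeleton as the paper: the covering $(\Lambda_{\bk})_{\bk\in I_r^m}$ from Definition \ref{def:homogeneousdimension}, the lexicographic ordering, the sequence of Lemma \ref{lem:sequence}, and the final reconciliation of the competing constraints through $1/\beta<1/(\alpha\gamma)$. The one place where you genuinely diverge is condition (d). The paper splits the sum $\sum_{\bj\neq\bk}|n_{\bj}-n_{\bk}|^{-\beta}$ according to the branching depth $\gamma(\bj,\bk)$: for shallow agreement it uses the multiplicative lower bound $|n_{\bj}-n_{\bk}|\geq\rho^{p/\alpha}n_1$ together with the count $r^{m+1-p}$ (giving a convergent series in $(r\rho^{\beta/\alpha})$, which is where $\beta>\alpha\gamma$ enters), and only for the at most $r^{s+1}$ deep-agreement terms does it invoke the additive gap $A$; this lets $A$ and $s$ be fixed \emph{before} $m$. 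You instead use the crude bound $|n_j-n_k|\geq|j-k|A$ throughout, which for $\beta\leq 1$ forces $A\gtrsim(r^m)^{(1-\beta)/\beta}\delta^{-1/\beta}$ to depend on $m$; this is harmless because the resulting contribution $c_2r^mA\sim(r^m)^{1/\beta}$ is still $o\bigl(r^{m/(\alpha\gamma)}\bigr)$, so the cap coming from (b) is met for $m$ large -- exactly the trade-off you identify. Your argument for (c) is also marginally cleaner than the paper's: by bounding $n_{l}/n_{l'}\leq 1-\rho^{p/\alpha}$ directly you get $(n_{l'}-n_{l})/n_{l'}\geq\rho^{p/\alpha}$ without passing through $n_{r,\dots,r}\leq 2c_1n_1$, so you only need $C(\Lambda)\leq D\rho$ rather than the full strength of \eqref{eq:diamlambda}. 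Both routes are valid; the paper's buys constants independent of $m$ at the cost of the two-regime bookkeeping, yours buys simplicity at the cost of letting $A$ and $n_1$ grow with $m$.
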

\begin{proof}
The inequality $\alpha\gamma<\beta$ implies that $\rho^{\beta/\alpha}r<1$.
We consider $\kappa>0$, $s>0$, $A\geq N$ and $m\geq 1$ satisfying the following constraints:
$$\kappa:=\frac{\tau^{1/\alpha}}{4c_1(C(\Lambda))^{1/\alpha}},$$
$$r\kappa^{-\beta} \sum_{p=s}^{+\infty}(r\rho^{\beta/\alpha})^p< \frac{\delta}3,$$
$$\sum_{l=1}^{r^{s+1}}\frac{1}{l^\beta A^\beta}<\frac{\delta}3,$$
$$\left\lfloor \frac{1}{2c_1}\left(\frac{\tau}{\rho^m C(\Lambda)}\right)^{1/\alpha}\right\rfloor 
\geq \max\left(\left(\frac3\delta\right)^{1/\beta},2+\frac{c_2}{c_1}Ar^m,N\right).$$

Observe that the conditions imposed on $\rho$, $\alpha$, $\beta$ and $r$ allow us to define successively $s$, $A$ and $m$. We set 
$$n_1:=\left\lfloor \frac{1}{2c_1}\left(\frac{\tau}{\rho^m C(\Lambda)}\right)^{1/\alpha}\right\rfloor$$
and we consider the sequence $(n_\bk)_{\bk\in\Imr}$ defined by Lemma \ref{lem:sequence}. We first remark that (a) is satisfied since $n_1\ge N$ and $n_{j+1}-n_j\ge A$ for every $j\ge 1$. We then set $q=r^m$ and we observe that
\begin{align*}
\max(n_\bk:\bk\in\Imr)&=n_{r,\dots,r}\\
&\leq \frac 12\left(\frac{\tau}{\rho^m C(\Lambda)}\right)^{1/\alpha}+c_2 A r^m\\
&\leq \left(\frac{\tau}{\rho^m C(\Lambda)}\right)^{1/\alpha}-2c_1\\
&\leq 2c_1n_1\leq \left(\frac{\tau}{\rho^m C(\Lambda)}\right)^{1/\alpha}.
\end{align*}

We then consider the covering $(\Lambda_\bk)_{\bk\in I_r^m}$ of $\Lambda$ given by Definition \ref{def:homogeneousdimension} and we fix $\lambda_{\bk}\in\Lambda_{\bk}$. 
Since
$$\diam(\Lambda_\bk)\leq \rho^m C(\Lambda)\leq \frac{\tau}{n_\bk^\alpha},$$
we get (b). Let now $\bk,\bj\in\Imr$ with $\bk<\bj$ and let $\lambda\in\Lambda_\bk$, $\mu\in\Lambda_\bj$. 
Let $p$ be the biggest integer such that $k_1=j_1,\dots,k_{p-1}=j_{p-1}$ so that $k_p<j_p$. Then $\lambda$
and $\mu$ both belong to $\Lambda_{k_1,\dots,k_{p-1}}$ which has diameter less than $\rho^{p-1}C(\Lambda)$. On the other hand the definition of the sequence $(n_{\mathbf i})$ ensures that
\begin{align*}
n_\bj-n_\bk&\geq \left(\frac1{1-\rho^{p/\alpha}}-1\right) n_\bk\\
&\geq \rho^{p/\alpha}n_\bk\\
&\geq \frac{\rho^{p/\alpha}}{2c_1}n_\bj
\end{align*}
so that  (c) is satisfied, since
\begin{align*}
\|\lambda-\mu\|&\leq \frac{C(\Lambda)}{\rho}\rho^p\\
&\leq (2c_1)^\alpha \frac{C(\Lambda)}{\rho}\left(\frac{n_\bj-n_\bk}{n_\bj}\right)^{\alpha}\\
&\leq D\left(\frac{n_\bj-n_\bk}{n_\bj}\right)^{\alpha}.
\end{align*}
 Let us now prove (d) and (e). For $\bj\neq\bk\in\Imr$, we denote by $\gamma(\bj,\bk)$ the biggest integer $p$ such that $k_1=j_1,\dots,k_{p-1}=j_{p-1}$, with $\gamma(\bj,\bk)=1$ if $k_1\neq j_1$. In particular, if we fix $\bk\in\Imr$ and $p\in\{1,\dots,m\}$, we can observe that
$$\card\left(\left\{\bj\in\Imr:\ \gamma(\bj,\bk)=p\right\}\right)\leq r^{m+1-p}.$$
Moreover, if $\gamma(\bj,\bk)=p$, then the computation done above shows that 
\begin{align*}
|n_\bj-n_\bk|&\geq \rho^{p/\alpha}n_1\\
&\geq \kappa \rho^{-(m-p)/\alpha}.
\end{align*}
We then split the sum appearing in (d) into two parts. On the one hand, using this last estimation,
\begin{align*}
\sum_{\substack{\bj\neq\bk\\ \gamma(\bj,\bk)\leq m-s}}\frac1{|n_\bj-n_\bk|^\beta}&\leq \sum_{p=1}^{m-s}\sum_{\substack{\bj\neq\bk\\ \gamma(\bj,\bk)=p}}\kappa^{-\beta}\rho^{(m-p)\beta/\alpha}\\
&\leq \sum_{p=1}^{m-s}r \kappa^{-\beta} (r\rho^{\beta/\alpha})^{m-p}\\
&\leq r\kappa^{-\beta}\sum_{p=s}^{+\infty} (r\rho^{\beta/\alpha})^p<\frac\delta3
\end{align*}
by the choice of $s$. On the other hand, we observe that there are at most 
$r+\dots+r^s\leq r^{s+1}$ elements $\bj\in\Imr$ such that $\bj\neq\bk$ and $\gamma(\bj,\bk)\geq m-s+1$. 
Moreover, the difference between two consecutive terms of the sequence $(n_\bj)$ is at least $A$. Thus,
$$\sum_{\substack{\bj\neq \bk\\ \gamma(\bj,\bk)\geq m-s+1}}\frac1{|n_\bj-n_\bk|^\beta}\leq
\sum_{l=1}^{r^{s+1}} \frac1{l^\beta A^\beta}<\frac\delta 3.$$
This achieves the proof of (d) with the stronger bound $2\delta/3$. Moreover, we can use this improved estimate to get easily (e):
$$\sum_{\bk\in\Imr}\frac1{n_\bk^\beta}\leq \frac{1}{n_{1,\dots,1}^\beta}+\sum_{\substack{\bk\in\Imr \\ \bk>(1,\dots,1)}}\frac 1{|n_\bk-n_{1,\dots,1}|^{\beta}}<\delta.$$
\end{proof}

%%%%%%%%%%%%% PROOF OF THE CRITERION %%%%%%%%%%%%%%%%%%%%%

\subsection{Proof of Theorem \ref{thm:multiprecised} }

\begin{proof}
We shall prove that the assumptions of the Basic Criterion are satisfied. Let $r\geq 2$ be such that, for all $m\geq 0$, there exists a sequence of compact sets $(\Lambda_\bk)_{\bk\in I_r^m}$ satisfying the assumptions of Definition \ref{def:homogeneousdimension}.
Since for each $m\geq 1$ and each $\bk\in I_r^m$, the set $\Lambda'=\Lambda_\bk$ satisfies the same assumptions as $\Lambda$ with $C(\Lambda')=C(\Lambda)\left(\frac 1{r^{1/\gamma}}\right)^m$ (just define, for $\bj\in I_r^{m'}$, $\Lambda'_\bj=\Lambda_{\bk,\bj}$) and since the assumptions of Theorem~\ref{thm:multiprecised} are satisfied by $\Lambda'$ for the same constants $\alpha$, $\beta$ and $D$, we may assume that 
$$C(\Lambda)\leq \frac{D}{(2c_1)^\alpha r^{1/\gamma}}.$$
Let $\veps>0$, $u,v\in\mathcal D$. Let $C,\tau>0$ and $N\in\mathbb N$ be such that the assumptions of Theorem \ref{thm:multiprecised} are satisfied for both $u$ and $v$. 
%In particular, there exists $N\in\mathbb N$ such that, for all $n\geq N$, for all $\lambda\in K$, $\|T_\lambda^n u\|<\veps$. 
We then consider the sequences $(n_k)$, $(\lambda_k)$ and $(\Lambda_k)$ given by Lemma \ref{lem:covering} applied with $\tau,N$ and $\delta=\veps/C$ (we may always assume that $C\geq 1$). It is now an easy exercise to prove that the assumptions of the Basic Criterion are satisfied. The most difficult point is to prove that, for all $k\in\{1,\dots,q\}$, for all $\lambda\in \Lambda_k$, one has
$$\left\|\sum_{j\neq k} T_\lambda^{n_k}S_{\lambda_j}^{n_j}(v)\right\|<\veps.$$
When $j>k$, 
$$\|\lambda-\lambda_j\|\leq D\frac{(n_j-n_k)^\alpha}{n_j^\alpha}$$
so that
$$\left\|\sum_{j>k} T_\lambda^{n_j}S_{\lambda_k}^{n_k} v\right\|\leq \sum_{j>k}\frac{C}{(n_j-n_k)^\beta},$$
whereas, when $j<k$, 
$$\|\lambda-\lambda_j\| \leq D\frac{(n_k-n_j)^\alpha}{n_k^\alpha}$$
so that 
$$\left\|\sum_{j<k} T_\lambda^{n_k}S_{\lambda_j}^{n_j} v\right\|\leq \sum_{j<k}\frac{C}{(n_k-n_j)^\beta}.$$
Property (d) of Lemma \ref{lem:covering} now finishes the job.
\end{proof}

\begin{question}
Condition (b) of Theorem \ref{thm:multiprecised} does not perfectly match (CS2) of the Costakis-Sambarino theorem because we cannot take $\alpha=1/\gamma$. Is it possible to cover this last case (changing if necessary condition (a))?
\end{question}
Observe that in the previous proof the condition $\alpha<1/\gamma$ was needed to get the convergence of the infinite product defining $C(m,1)$.


\begin{thebibliography}{999}
\bibitem{AG}
\textsc{E. Abakumov and J. Gordon}.
\newblock Common hypercyclic vectors for multiples of backward shift.
\newblock J. Funct. Anal. \textbf{200} (2003),  494--504.

\bibitem{BMIND}
\textsc{F.~Bayart and \'E. Matheron}.
\newblock How to get common universal vectors.
\newblock Indiana Univ. Math. J. \textbf{56} (2007), 553--580.

\bibitem{BM09}
\textsc{F.~Bayart and \'E. Matheron}.
\newblock {\em Dynamics of linear operators}, volume 179 of {\em Cambridge
  Tracts in Math}.
\newblock Cambridge University Press, 2009.

\bibitem{Ba15}
\textsc{F.~Bayart}.
\newblock Common hypercyclic vectors for high dimensional families of operators.
\newblock Int. Math. Res. Not. \textbf{21} (2016), 6512--6552.

\bibitem{CoSa04a}
\textsc{G. Costakis and M. Sambarino}.
\newblock Genericity of wild holomorphic functions and common hypercyclic vectors.
\newblock Adv. Math. \textbf{182} (2004), 278--306.

\bibitem{Falc}
\textsc{K. Falconer.} Fractal geometry: Mathematical foundations and applications
(Wiley, 2003).

\bibitem{GePeBook}
\textsc{K.-G. Grosse-Erdmann and A.~Peris}.
\newblock {\em Linear chaos}.
\newblock Springer, 2011.

\bibitem{GuKa81}
\textsc{P. K. Kamthan and M. Gupta.} Sequence spaces and series (Marcel Dekker, 1981).

\bibitem{LeMu04}
\textsc{F. Le{\'o}n-Saavedra and V. M{\"u}ller.} 
\newblock Rotations of hypercyclic and supercyclic operators,
\newblock Int. Eq. Op. Th. \textbf{50} (2004), 385--391.

\bibitem{Mat95}
\textsc{P.~Mattila.} Geometry of Sets and Measures in Euclidian Spaces 
(Cambridge Studies
in Advanced Mathematics, Cambridge University Press, 1995).
\end{thebibliography}
\end{document}